\theoremstyle{plain}% default
\newtheorem*{theorem*}{Theorem}
\newtheorem*{remark*}{Remark}
\newtheorem*{example*}{Example}
\newtheorem{lemma}{Lemma}
\newtheorem{proposition}[lemma]{Proposition}
\newtheorem{corollary}[lemma]{Corollary}
\newtheorem{theorem}[lemma]{Theorem}
\newtheorem*{conjecture*}{Conjecture}
\newtheorem{question}[lemma]{Question}
\newtheorem*{necessarycondition*}{Necessary Condition}
\numberwithin{lemma}{section}
\theoremstyle{definition}
\newtheorem{definition}[lemma]{Definition}
\newtheorem{example}[lemma]{Example}
\theoremstyle{remark}
\newtheorem{remark}[lemma]{Remark}
\newtheorem{notation}[lemma]{Notation}
\newcommand{\abs}[1]{\left|{#1}\right|}
\def\quotient#1#2{%
    \raise1ex\hbox{$#1$}\Big/\lower1ex\hbox{$#2$}%
}
\begin{document}

\date{October 16, 2022}
 \title[A property of ideals of jets of functions vanishing on a set]{A property of ideals of jets of functions vanishing on a set}
  \author{Charles Fefferman$^\dag$ and Ary Shaviv$^\dag$$^\dag$}
 \address{Department of Mathematics, Princeton University, Princeton, New
 Jersey 08544}
 \email{cf@math.princeton.edu; ashaviv@math.princeton.edu}
\thanks{$^\dag$C.F. was supported by the Air Force Office of Scientific Research (AFOSR),
under award FA9550-18-1-0069 and the National Science Foundation (NSF), under
grant DMS-1700180.}
\thanks{$^\dag$$^\dag$A.S.   was supported
by the Air Force Office of Scientific Research (AFOSR), under award FA9550-18-1-0069.
 }
\subjclass[2010]{Primary 26B05, Secondary 41A05.}

\maketitle

\begin{abstract}
For a set $E\subset\mathbb{R}^n$ that contains the origin we consider $I^m(E)$ -- the set of all $m^{\text{th}}$ degree Taylor approximations (at the origin) of $C^m$ functions on $\mathbb{R}^n$ that vanish on $E$. This set is an ideal in $\mathcal{P}^m(\mathbb{R}^n)$ -- the ring of all $m^{\text{th}}$ degree Taylor approximations of $C^m$ functions on $\mathbb{R}^n$. Which ideals in  $\mathcal{P}^m(\mathbb{R}^n)$ arise as $I^m(E)$ for some $E$? In this paper we introduce the notion of a \textit{closed} ideal in  $\mathcal{P}^m(\mathbb{R}^n)$, and prove that any ideal of the form $I^m(E)$ is closed. We do not know whether in general any closed ideal is of the form $I^m(E)$ for some $E$, however we prove in \cite{FS2} that all closed ideals in $\mathcal{P}^m(\mathbb{R}^n)$ arise as $I^m(E)$ when $m+n\leq5$. 
\end{abstract}

\tableofcontents

\section{Introduction}

In this article we work in the ring $\mathcal{P}_{0}^{m}(\mathbb{R}^n)$ of $m$-jets (at the origin) of $C^m$ functions on $\mathbb{R}^n$ that vanish at the origin. We write $J^m(F)$ to denote the $m$-jet at $\vec 0$ of a function $F\in C^m(\mathbb{R}^n)$, i.e., its $m^{\text{th}}$ degree Taylor approximation about the origin. Given a subset $E\subset\mathbb{R}^n$ that contains the origin, we define an ideal $I^{m}(E)$ in $\mathcal{P}_{0}^{m}(\mathbb{R}^n)$ by $$I^{m}(E):=\{J^{m}(F)\text{ }|\text{ } F \in C^m(\mathbb{R}^n)\text{ and } F=0 \text{ on } E\}.$$ Our main goal is to understand which ideals $I$ in $\mathcal{P}_{0}^{m}(\mathbb{R}^n)$ arise as $I^m(E)$ for some $E$.

\

We introduce the notion that a given ideal $I$ \textit{implies} a particular jet $p$. The set of all the jets implied by a given ideal $I$ is an ideal containing $I$, which we call the \textit{closure} of $I$. We say that $I$ is \textit{closed} if it is equal to its closure.

\subsection*{Main results in a nutshell} We will prove that $I^{m}(E)$ is always closed (for any $m$ and any $E$); see Theorem \ref{main_theorem_on_necessary_condition}. We know of no example of an ideal $I$ that is closed but does not arise as $I^{m}(E)$ for some $E$. Indeed, in the rings $\mathcal{P}_{0}^{1}(\mathbb{R}^n),\mathcal{P}_{0}^{m}(\mathbb{R}^1),\mathcal{P}_{0}^{2}(\mathbb{R}^2),\mathcal{P}_{0}^{3}(\mathbb{R}^2)$ and $\mathcal{P}_{0}^{2}(\mathbb{R}^3)$ we will classify in our subsequent paper \cite{FS2} all possible closed ideals up to a natural equivalence relation, and for each ideal $I$ on these lists we will exhibit a set $E$ such that $I=I^m(E)$. The natural equivalence relation arises from the
fact that every $C^m$-diffeomorphism $\phi:\mathbb{R}^{n}\to\mathbb{R}^n$
that
fixes the origin induces an automorphism of $\mathcal{P}_0^m(\mathbb{R}^n)$
defined by $p\mapsto J^{m}(p\circ\phi)$. We say that two ideals $I$ and $I'$
in $\mathcal{P}_0^m(\mathbb{R}^n)$
are \textit{equivalent} if $I'$ is the image of $I$ under such an automorphism. Since this equivalence relation preserves the property of an ideal being closed, in \cite{FS2} we in particular prove that each closed ideal in the rings $\mathcal{P}_{0}^{1}(\mathbb{R}^n),\mathcal{P}_{0}^{m}(\mathbb{R}^1),\mathcal{P}_{0}^{2}(\mathbb{R}^2),\mathcal{P}_{0}^{3}(\mathbb{R}^2)$
and $\mathcal{P}_{0}^{2}(\mathbb{R}^3)$  arises as
$I^m(E)$
for some closed $E$ that contains the origin. 

\subsection*{Nullstellensatz-type point of view} Our problem is loosely analogous to the setting of the Hilbert Nullstellensatz, where one asks which polynomial ideals arise as $\mathcal{I}(V)$, the set of polynomials vanishing on an algebraic subset $V$ of $\mathbb{C}^n$. In that setting, we may say that an ideal $I$ \textit{implies} a polynomial $p$ if $p$ belongs to the radical of $I$. Recall that the Nullstellensatz tells us that an ideal $I\lhd\mathbb{C}[x_1,\dots, x_n]$ arises as $\mathcal{I}(V)$ for some algebraic subset  $V$ if and only if it is equal to its radical. When either $m=1$, $n=1$ or $m+n\leq5$, we will prove in  \cite{FS2} that an ideal
$I\lhd\mathcal{P}_0^m(\mathbb{R}^n)$ arises as $I^m(E)$ for some closed subset $E$ that contains the origin if and only
if it is equal to its closure. We have not yet studied enough examples to know whether to believe that for arbitrary $m$ and $n$ every
closed ideal in $\mathcal{P}_0^m(\mathbb{R}^n)$ arises as $I^{m}(E)$ for
some $E$. Perhaps, in addition to being closed, $I^{m}(E)$ has further properties
of which we are so far unaware.

We stress that unlike the setting of Hilbert Nullstellensatz, we are not aware of a natural map that assigns to each ideal $I\lhd\mathcal{P}_0^m(\mathbb{R}^n)$ a closed subset
$E$ that contains the origin such that $I=I^m(E)$. We present numerous
concrete examples in \cite{FS2}, and in particular it will become evident that it does not make sense to look at the zero locus of a given ideal -- we will see that in $\mathcal{P}_0^2(\mathbb{R}^2)$ (where $(x,y)$ is a standard coordinate system), $I^2(\{x=0\})$ is the principal ideal generated by $x$, while $I^2(\{x(x^2-y^3)=0\}\cap\{x\geq0\})$ is the principal ideal generated by $x^2$. The latter example also shows that $I^m(E)$ may be not radical (and in particular not real-radical).

\subsection*{A conjecture by N. Zobin} In \cite{FS2} we will see in particular that
each closed ideal in the rings  $\mathcal{P}_0^1(\mathbb{R}^n)$, $\mathcal{P}_0^m(\mathbb{R}^1),
 \mathcal{P}_0^2(\mathbb{R}^2),
\mathcal{P}_0^3(\mathbb{R}^2)$ and $\mathcal{P}_0^2(\mathbb{R}^3)$ arises
as $I^{m}(E)$ for a \textit{semi-algebraic} set\footnote{\label{semialgebraic_footnote}We
do not try to give the exact definition of semi-algebraic sets (and
maps) in this paper, but loosely speaking these may be thought of as sets
(and
maps) that can be described by finitely many polynomial equations and inequalities
and boolean operations. For detailed exposition on semi-algebraic geometry
see, for instance, \cite{BCR}.} $E$ that contains the origin. A conjecture
of N. Zobin (see \cite[Problem 5]{F}) asserts that every ideal $I^{m}(E)$ in $\mathcal{P}_0^m(\mathbb{R}^n)$
already arises as the ideal $I^{m}(V)$ for a semi-algebraic set $V$ that
contains the origin. Thus, we prove that the conjecture is true for the cases
in which either $m=1$, $n=1$ or $m+n\leq 5$. We do not know whether the conjecture
holds for general $m$ and $n$, though it seems plausible
to us.

\subsection*{Related problems} All the questions this paper addresses may
be asked with $C^m$ functions replaced by different classes of functions.
For instance, fix  $m,n\in\mathbb{N}$ and $m\leq k\in\mathbb{N}\cup\{\infty\}$.
Given a subset $E\subset\mathbb{R}^n$ that contains
the origin, define an ideal $I^{m}_k(E)$ in $\mathcal{P}_{0}^{m}(\mathbb{R}^n)$
by $$I^{m}_k(E):=\{J^{m}(F)\text{ }|\text{ } F \in C^k(\mathbb{R}^n)\text{
and } F=0 \text{ on } E\}.$$ What can we say about the ideal $I^m_k(E)$?
Which ideals in $\mathcal{P}^m_0(\mathbb{R}^n)$ arise as $I_{k}^{m}(E)$ for
some $E$? Does every ideal of the form $I^{m}_{k}(E)$ already arises as the
ideal $I^{m}_{k}(V)$ for a semi-algebraic set $V$? We would like to study
these variants, however in this paper we only deal with the case $m=k$.

\subsection*{The relation to extension problems}The ideal $I^{m}(E)$ arose naturally in connection with \textit{Whitney's extension problem}: given a function $f:E\to\mathbb{R}$ (with $\vec 0\in E\subset\mathbb{R}^n$) we want to decide whether $f$ extends from $E$ to a $C^m$ function $F$ on the whole $\mathbb{R}^n$. If such an extension $F$ exists, then $I^{m}(E)$ expresses the ambiguity in the jet of $F$ at the origin. Moreover, similarly to the definition of $I^m(E)$, we can define the ideal $I^m_{\vec
x}(E)$ of $m$-jets of $C^m$ functions that vanish identically on $E$ about
an arbitrary point $\vec x\in E$. In this terminology the ideal $I^m(E)$
is $I_{\vec 0}^m(E)$. Analyzing the ideals $I_{\vec
x}^m(E)$ is often a key step for solving such Whitney type extrapolation problems   (see, for instance, \cite{F1}). 

We stress here that for a given set $E$, one can in principle compute $I^{m}(E)$; see \cite{F1}. In this paper we are interested is the converse question: given $I$, can we find an $E$?

\subsection*{Intuition on implied jets} We prepare to motivate and explain the notion of an ideal $I$ implying a jet $p$ without going into technicalities. We sacrifice accuracy to meet this goal. Let us set up some ad-hoc notation and definitions for this purpose (these notation and definitions will be only used in this introduction; the main text will include a rigorous exposition and in particular a detailed construction of all the examples below). 

\

We identify $m$-jets with $m^{\text{th}}$ degree Taylor polynomials. Thus, $\mathcal{P}_0^m(\mathbb{R}^n)$ consists of all at most $m^{\text{th}}$  degree polynomials on $\mathbb{R}^n$ with a zero constant term, $J^m(F)$ is the $m^{\text{th}}$  degree Taylor polynomial of $F$ at the origin, and the multiplication in $\mathcal{P}_0^m(\mathbb{R}^n)$ is given by $PQ:=J^{m}(PQ)$.

\

Let $\Gamma\subset\mathbb{R}^n$ be an open set whose closure contains the origin. A $C^m$\textit{-flat} function on $\Gamma$ is a function $F\in C^m(\Gamma)$, such that $\partial^\alpha F(\vec x)=o(\abs{\vec x}^{m-\abs{\alpha}})$ as $\vec x$ tends to the origin in $\Gamma$, for any $\abs{\alpha}\leq m$. A $C^m$\textit{-tame} function on $\Gamma$ is a function $S\in C^m(\Gamma)$, such that $\partial^\alpha
S(\vec x)=O(\abs{\vec x}^{-\abs{\alpha}})$ as $\vec x$ tends to the origin
in $\Gamma$, for any $\abs{\alpha}\leq m$.

Often, $\Gamma$ will be a cone with vertex at the origin. If $\Gamma=\mathbb{R}^n\setminus \{ \vec 0\}$, then the $C^m$-flat functions are simply the $C^m$ functions $F$ with $J^{m}(F)=0$, restricted to $\mathbb{R}^n\setminus\{\vec 0\}$. The following simple observation will be important:

\begin{remark}\label{intro_label_0}Let $F$ be $C^m$-flat on $\Gamma$, and let $S$ be $C^m$-tame on $\Gamma$. Then, $S\cdot F$ is $C^m$-flat on $\Gamma$.\end{remark}

We start by presenting two examples to show that the ideal $I(E)$ unsurprisingly restricts the geometry of $E$.

\begin{example}\label{intro_example_1}In $\mathcal{P}_0^2(\mathbb{R}^2)$, suppose $x^{2}+y^{2} \in I^{2}(E)$. Then, by definition there exists $F\in C^2(\mathbb{R}^2)$ with jet $x^2+y^2$ that vanishes on $E$. We have $F(x,y)-(x^2+y^2)= o(x^2+y^2)$ as $(x,y)\to(0,0)$, hence $F(x,y)$ is nonzero on a punctured neighborhood of the origin. Consequently, that punctured neighborhood contains no points of $E$. This in turn easily implies that $I^{2}(E)$ consists of all jets that vanish at the origin, i.e., $I(E)=\mathcal{P}_0^2(\mathbb{R}^2)$. We thus proved that $$x^2+y^2\in
I^2(E)\implies (0,0)\text{ is an isolated point of }E\text{ and }I^2(E)=\mathcal{P}_0^2(\mathbb{R}^2).$$\end{example}

\begin{example}\label{intro_example_2}In $\mathcal{P}_0^2(\mathbb{R}^2)$, suppose $xy \in
I^{2}(E)$. Let $\Gamma$ be an open sector with vertex at the origin, whose
closure in $\mathbb{R}^2\setminus\{\vec 0\}$ meets neither the $x$-axis nor
the $y$-axis.  Then, by the argument of Example \ref{intro_example_1}, $E\cap
\Gamma$ cannot contain points arbitrarily close to the origin. This tells
us that the only possible "tangent directions to $E$ at the origin" are the
$x$-axis and the $y$-axis.\end{example}

We formalize the lesson of Examples \ref{intro_example_1} and \ref{intro_example_2} in the following definition:

\

Let $I$ be an ideal in $\mathcal{P}_0^m(\mathbb{R}^n)$ and let $\omega \in
S^{n-1}$ be a unit vector (a "direction"). We say that $\omega$ is \textit{forbidden}
for $I$ if there exist jets $Q_1,\dots,Q_L \in I$, a cone   $$\Gamma=\{\vec x\in
\mathbb{R}^n: 0<\abs{\vec x}<r, \abs{\frac{\vec x}{\abs{\vec x}}-\omega}<\delta\},$$
(for some $r,\delta>0$) and a constant $c>0$, such that $\abs{Q_1(\vec x)}+\dots \abs{Q_L(\vec x)}>c\abs{\vec
x}^m$ for any $\vec x\in\Gamma$. If $\omega$ is not forbidden, we say that $\omega$
is \textit{allowed}. We write $Forb(I)$ and $Allow(I)$, respectively, to
denote the sets of forbidden and allowed directions in the unit sphere.

\

For an ideal $I=I^m(E)$, the argument of Example \ref{intro_example_1} easily shows that only allowed directions may be tangent to $E$ at the origin. Thus, as promised, \textit{$I$ constrains the geometry of any set $E$ for which we hope that $I=I^m(E)$}.

\

Note that Example \ref{intro_example_1} also tells us that if an ideal is of the form $I^m(E)$ for some $E$ in the ring $\mathcal{P}_0^2(\mathbb{R}^2)$, and it contains the jet $x^2+y^2$, then it contains many more jets. Let us see more examples that illustrate how the existence of some jets in $I^m(E)$ can force the existence of another jet in $I^m(E)$.

\begin{example}\label{intro_example_3}In $\mathcal{P}_0^3(\mathbb{R}^2)$, suppose $x(x^2+y^2) \in I^{3}(E)$.
Then, $x^3, x^2y, xy^2 \in I^{3}(E)$ as well.

Indeed, since $x(x^2+y^2) \in I^{3}(E)$, there exists $F\in C^3(\mathbb{R}^2)$
with jet $x (x^2+y^2)$, such that $F=0$ on $E$. Equivalently, there
exists a $C^3$-flat function $F_1$ on $\mathbb{R}^2\setminus\{\vec 0\}$ such
that \begin{gather}\label{intro_label_1}x(x^2+y^2)+F_1(x,y)=0\text{
on }E.\end{gather}
Now set $S_1=\frac{x^2}{x^2+y^2}$, and note that $S_1$ is $C^3$-tame on $\mathbb{R}^2\setminus\{\vec
0\}$. Multiplying (\ref{intro_label_1}) by $S_1$, we have
\begin{gather}\label{intro_label_2}x^3+S_1\cdot F_1=0\text{ on }E.\end{gather}
Moreover, $S_1\cdot F_1$ is $C^3$-flat on $\mathbb{R}^2\setminus\{\vec 0\}$,
by Remark \ref{intro_label_0}. Therefore, (\ref{intro_label_2}) shows that
$x^3 \in I^{3}(E)$. Similar arguments using $S_2=\frac{xy}{x^2+y^2}$ and $S_3=\frac{y^2}{x^2+y^2}$
in place of $S_1$ show that also $x^2\cdot y$ and $xy^2$ belong to $I^{3}(E)$.\end{example}

The argument of Example \ref{intro_example_1} can be modified easily to show that any \textit{proper} closed ideal in $\mathcal{P}_0^m(\mathbb{R}^n)$ has a non-empty set of allowed directions. Example \ref{intro_example_3} shows that \textit{not} every ideal in $\mathcal{P}_0^m(\mathbb{R}^n)$ with allowed directions is closed: the principal ideal generated by $x(x^2+y^2)$ in $\mathcal{P}_0^3(\mathbb{R}^2)$ is not closed, though $\{(0,\pm1)\}$ are allowed directions of this ideal.

\

Our final example in this introduction brings into play the set of allowed directions:

\begin{example}\label{intro_example_4}  In $\mathcal{P}_0^2(\mathbb{R}^3)$, suppose $I=I^{2}(E)$ contains
$x^2$ and $y^2-xz$. Then, $I$ contains $xy$.

To see this, first note that outside any conic neighborhood of the $z$-axis, we have $\abs{x^2}+\abs{y^2-xz}>c(x^2+y^2+z^2)$ for some constant $c>0$ that depends on the conic neighborhood. 
Therefore, $Allow(I)$
is contained in $\{(0,0,\pm1)\}$ and so $E$ is tangent to the $z$-axis, i.e.,
\begin{gather}\label{stubbern_example_label_1}E\subset \{(x,y,z):\abs{(x,y)}\leq g(\abs{z})\cdot\abs{z}\},\end{gather}for some function $g:[0,\infty)\to\mathbb{R}$ that is strictly positive away from 0 and that satisfies \begin{gather}\label{stubbern_example_label_2}g(t)\to0\text{ as }t\to0.\end{gather}Let us see how, by possibly replacing $g$ by a function that goes to zero more slowly, we may assume without loss of generality that $g\in C^2(0,\infty)$ and \begin{gather}\label{stubbern_example_label_3}(\frac{d}{dt})^kg(t)=O(t^{-k}g(t))\text{
for all }k\in\{0,1,2\}.\end{gather}Note that replacing $E$ with its intersection with a small ball about the origin does not affect $I^2(E)$, so we may first replace $g$ by $\min\{g(t),1\}$. We thus may assume without loss of generality that $0<g(t)\leq1$ for all $t\in(0,\infty)$.
Second, we set $\tilde g(t):=\sup\{\frac{2t}{t+s}g(s):s\in(0,\infty)\}$, for all $t\in(0,\infty)$. Note that this sup is finite since $g(s)\leq1$ for all $s\in(0,\infty)$. By taking $s=t$ in the above sup, we see that \begin{gather}\label{stubbern_example_label_4}\tilde g(t)\geq g(t)\text{
for all }t\in(0,\infty).\end{gather} Fix $\epsilon>0$. By (\ref{stubbern_example_label_2}), there exists $\delta(\epsilon)>0$ such that $g(s)<\epsilon$ for all $0<s<\delta(\epsilon)$. So we have $(\frac{2t}{t+s})g(s)<2\epsilon$ for all $0<s<\delta(\epsilon)$. For $s\geq\delta(\epsilon)$ we have $(\frac{2t}{t+s})g(s)\leq\frac{2t}{t+s}\leq\frac{2t}{\delta(\epsilon)}<2\epsilon$ for $t<\tilde\delta(\epsilon):=\epsilon\cdot\delta(\epsilon)$. These two facts together tell us that $\tilde g(t)\leq2\epsilon$ for all $t<\tilde\delta(\epsilon)$, and so \begin{gather}\label{stubbern_example_label_5}\tilde
g(t)\to0\text{
as }t\to0.\end{gather}One readily sees that if $\frac{t_1}{t_2}\in[\frac{1}{2},2]$ then $\frac{1}{4}\cdot\frac{2t_1}{t_1+s}\leq \frac{2t_2}{t_2+s}\leq 4\cdot \frac{2t_1}{t_1+s}$ for all $s\in(0,\infty)$, and so \begin{gather}\label{stubbern_example_label_6}\frac{1}{4}\cdot\tilde
g(t_1)\leq \tilde g(t_2)\leq 4\cdot\tilde g(t_1)\text{
if }\frac{t_1}{t_2}\in[\frac{1}{2},2].\end{gather} 
Let $\varphi\in C^\infty((0,\infty))$ be a non-negative function, supported in $[\frac{1}{2},2]$, and not the zero function, and set $$g^*(t):=\int\limits_{0}^{\infty}\varphi(\frac{t}{s})\cdot\tilde g(s)\frac{ds}{s}.$$ Then $g^*\in C^\infty((0,\infty))$. Since $\frac{t}{s}\in[\frac{1}{2},2]$ in the support of the integrand, we have from (\ref{stubbern_example_label_6}) that \begin{gather}\label{stubbern_example_label_7}c\cdot\tilde
g(t)\leq g^*(t)\leq C\cdot\tilde g(t)\text{
for all }t\in(0,\infty),\end{gather} where $c,C$ above are positive constants depending only on $\varphi$. Moreover, for any $t\in(0,\infty)$ and $k\in\{0,1,2\}$ we have \begin{gather}\label{stubbern_example_label_9}\abs{(\frac{d}{dt})^kg^*(t)}=\abs{\int\limits_0^{\infty}s^{-k}\varphi^{(k)}(\frac{t}{s})\cdot\tilde g(s)\frac{ds}{s}}\leq \int\limits_0^\infty s^{-k}\abs{\varphi^{(k)}(\frac{t}{s})}\frac{ds}{s}\cdot4\tilde
g(t)=C' t^{-k}\tilde g(t),\end{gather}where $C'$ is a positive constant
depending only on $\varphi$, and again we exploit estimate (\ref{stubbern_example_label_6}) in the support of the integrand. Now, (\ref{stubbern_example_label_7}) and (\ref{stubbern_example_label_9})  imply that \begin{gather}\label{stubbern_example_label_10}(\frac{d}{dt})^kg^{*}(t)=O(t^{-k}g^{*}(t))\text{
for all }k\in\{0,1,2\}.\end{gather} From (\ref{stubbern_example_label_4}), (\ref{stubbern_example_label_5}) and (\ref{stubbern_example_label_7}) we see that \begin{gather}\label{stubbern_example_label_11}g^*(t)\to0\text{
as }t\to0\end{gather} and \begin{gather}\label{stubbern_example_label_12}C''
g^*(t)\geq g(t)\text{
for all }t\in(0,\infty),\end{gather}where $C''$ is a positive constant
depending only on $\varphi$. Finally, thanks to (\ref{stubbern_example_label_10}), (\ref{stubbern_example_label_11}) and (\ref{stubbern_example_label_12}) we may replace $g^*(t)$ by $g^+(t):=C''g^*(t)$ and conclude that (\ref{stubbern_example_label_1}), (\ref{stubbern_example_label_2}) and (\ref{stubbern_example_label_3}) all hold.

We thus established that we may assume without loss of generality that (\ref{stubbern_example_label_1}),
(\ref{stubbern_example_label_2}) and (\ref{stubbern_example_label_3}) all
hold. We proceed by replacing $E$ with its intersection
with a small ball about the origin (this does not affect $I^2(E)$), and so we may also assume that $\abs{(x,y)}\leq\abs{z}$ for any $(x,y,z)\in E$.

Let $\theta(t)$ be a $C^2$ (cutoff) function on $[0,\infty)$, supported in $[0,2]$ and equal to 1 on $[0,1]$. We define functions on $\mathbb{R}^3\setminus\{\vec0\}$ by $$F_1(x,y,z)=\frac{y^3}{z}\cdot\theta(\frac{\abs{(x,y)}}{g(z)\abs{z}})\text{ and }S(x,y,z)=\frac{y}{z}\cdot\theta(\frac{\abs{(x,y)}}{\abs{z}}).$$ One can readily verify that (\ref{stubbern_example_label_2}) and (\ref{stubbern_example_label_3}) imply that $F_1$ is $C^2$-flat on $\mathbb{R}^3\setminus\{\vec 0\}$ and $S$ is $C^2$-tame on $\mathbb{R}^3\setminus\{\vec 0\}$. Recall that $y^2-xz\in I^2(E)$. Thus, there exists a $C^2$-flat function on $\mathbb{R}^3\setminus\{\vec 0\}$ which we denote by $F_2$, such that \begin{gather}\label{stubbern_example_label_13}y^2-xz+F_2(x,y,z)=0\text{
on }E\setminus\{\vec 0\}.\end{gather}Multiplying (\ref{stubbern_example_label_13}) by $-\frac{y}{z}$ we find that \begin{gather}\label{stubbern_example_label_14}xy-\frac{y^3}{z}-\frac{y}{z}F_2(x,y,z)=0\text{
on }E\setminus\{\vec 0\}.\end{gather}By our assumption that that $\abs{(x,y)}\leq\abs{z}$ for any $(x,y,z)\in E$ and (\ref{stubbern_example_label_1}) we have that $$\theta(\frac{\abs{(x,y)}}{g(z)\abs{z}})=\theta(\frac{\abs{(x,y)}}{\abs{z}})=1\text{ on }E,$$and so $$F_1=\frac{y^3}{z}\text{ and }S=\frac{y}{z}\text{ on }E.$$Consequently, (\ref{stubbern_example_label_14}) implies that \begin{gather}\label{stubbern_example_label_15}xy-[F_1+SF_2]=0\text{
on }E\setminus\{\vec 0\}.\end{gather} Recall that $F_1,F_2$ are $C^2$-flat on $\mathbb{R}^3\setminus\{\vec0\}$ and $S$ is $C^2$-tame
on $\mathbb{R}^3\setminus\{\vec0\}$. Thanks to Remark
\ref{intro_label_0} we see that $F_2+SF_2$ is $C^2$-tame
on $\mathbb{R}^3\setminus\{\vec0\}$, and so (\ref{stubbern_example_label_15}) tells us that $xy\in I^2(E)$. This completes
our analysis of Example \ref{intro_example_4}. 

We stress that we got crucial help from our knowledge of $Allow(I)$, which told us where $E$ lives and permitted us to make use of the cutoff functions $\theta(\frac{\abs{(x,y)}}{g(z)\abs{z}})$ and $\theta(\frac{\abs{(x,y)}}{\abs{z}})$, thus avoiding the singularities of the functions $\frac{y^{3}}{z}$ and $\frac{y}{z}$.\end{example}

\

We are now ready to define the notion of an implied jet. Unfortunately, it is not so simple.

\

Let $I$ be an ideal in $\mathcal{P}_0^m(\mathbb{R}^n)$, and let $\Omega\subset S^{n-1}$ be the set of allowed directions for $I$. We say that $I$ \textit{implies} a given jet $p$ if there exist a constant $A>0$ and jets $Q_1,\dots Q_L\in I$ for which the following holds:

Given $\epsilon>0$ there exist $\delta, r >0$ such that for any $0<\rho<r$ there exist functions $F, S_1,\dots ,S_L$ satisfying 

\begin{gather}\label{intro_label_6}\abs{\partial^\alpha
F(\vec x)}\leq \epsilon \rho^{m-\abs{\alpha}} \text { for all }\frac{\rho}{4}<\abs{\vec x}<4\rho
\text{ and all }\abs{\alpha}\leq m; \end{gather}
\begin{gather}\label{intro_label_7}\abs{\partial^\alpha S_l(\vec
x)}\leq A \rho^{-\abs{\alpha}} \text { for all }\frac{\rho}{4}<\abs{\vec
x}<4\rho
\text{, all }\abs{\alpha}\leq m \text{ and all }1\leq l\leq L; \end{gather}
\begin{multline}\label{intro_label_8}\text{if }\Omega\neq\emptyset\text{
then }p(\vec x)=F(\vec x)+S_1(\vec x)Q_1(\vec x)+S_2(\vec
x)Q_2(\vec x)+\dots+S_L(\vec
x)Q_L(\vec x) \\ \text {for all }\frac{\rho}{2}<\abs{\vec
x}<2\rho
\text{ such that }\text{dist}(\frac{\vec x}{\abs{\vec x}},\Omega)<\delta.
\end{multline}
The point of this definition is as follows: suppose that $I$ implies $p$, and suppose that $I=I^{m}(E)$ for some $E$. The functions $S_1,\dots, S_L, F$ in (\ref{intro_label_6})-(\ref{intro_label_8}) are allowed to depend on the length scale $\rho$, but by patching together the $S_1,\dots, S_L, F$ arising from all small length scales, we can find functions  $S^\#_1,\dots S^\#_L$ that are $C^m$-tame on $\mathbb{R}^n\setminus\{\vec 0\}$ 
and a function $F^\#$ that is $C^m$-flat on $\mathbb{R}^n\setminus\{\vec 0\}$ such that in some punctured neighborhood of the origin we have $$p(\vec x)=F^{\#}(\vec x)+S^{\#}_1(\vec x)Q_1(\vec x)+S^{\#}_2(\vec
x)Q_2(\vec x)+\dots+S^{\#}_L(\vec
x)Q_L(\vec x)\text{ on }E.$$ An easy argument using Remark \ref{intro_label_0} then shows that $p \in I=I^{m}(E)$. Thus, if $I=I^{m}(E)$ then $I$ contains any jet $p$ implied by $I$, i.e., $I^{m}(E)$ is always closed (Theorem \ref{main_theorem_on_necessary_condition} below).

\

Let us see how the above definition applies to Example \ref{intro_example_4} by verifying that $I$ implies $p=xy$. Recall that $\Omega=Allow(I)$ is contained in $\{(0,0,\pm1)\}$. We take $Q_1$ to be $y^2-xz$, and we take (for instance) $A=10^9$. Given $\epsilon>0$, we take (for instance) $\delta=r=\frac{\epsilon}{10^9}$.

Now, given $\rho<r$, we must produce functions $F$ and $S_1$ satisfying (\ref{intro_label_6}),(\ref{intro_label_7}), and (\ref{intro_label_8}) above.
We will take $$F=\frac{y^3}{z}\cdot \theta(\frac{\abs{(x,y)}}{\delta\cdot\rho})$$and$$S_1=-\frac{y}{z}\cdot\theta(\frac{\abs{(x,y)}}{\rho}),$$
where $\theta$ is a $C^3$-smooth one variable cutoff function defined on $[0,\infty]$, equal to 1 on $[0,4]$, supported on $[0,8]$ and such that $\theta$ and its derivatives up to order 3 have absolute value at most 100.

The above $F$ and $S_1$ satisfy (\ref{intro_label_6}),(\ref{intro_label_7}),
and (\ref{intro_label_8}). In fact, for $\frac{\rho}{4}<\abs{\vec x}<4\rho$, (\ref{intro_label_7}) is easily  verified, and (\ref{intro_label_6}) holds since $\abs{\partial^\alpha
F(\vec x)}=O(\delta^{3-\abs{\alpha}}\cdot\rho^{2-\abs{\alpha}})$ for all $\abs{\alpha}\leq 2$. To check (\ref{intro_label_8}) we note that $\theta(\frac{\abs{(x,y)}}{\delta\rho})=\theta(\frac{\abs{(x,y)}}{\rho})=1$ for $\frac{\rho}{2}<\abs{(x,y,z)}<2\rho$ such that $\text{dist}(\frac{\vec x}{\abs{\vec x}},\Omega)<\delta$. Consequently, (\ref{intro_label_8}) reduces to the equation $xy=(-\frac{y}{z})(y^2-xz)+\frac{y^3}{z}$. Thus, as promised, $I$ implies $xy$ in Example \ref{intro_example_4}. The cutoff function $\theta(\frac{\abs{(x,y)}}{\delta\cdot\rho})$
here plays the role of the cutoff $\theta(\frac{\abs{(x,y)}}{g(\abs{z})\cdot\abs{z}})$
in Example \ref{intro_example_4}.

\subsection*{Calculating implied jets} So we have defined the notion that
an ideal $I$ implies a jet
$p$. How can we show in practice that a \textit{given} ideal $I$ implies a \textit{given} jet
$p$?
In Section \ref{section_calculation_method} we develop tools that answer
this question,
by  introducing the notions of \textit{strong implication} and \textit{strong directional
implication}. These notions use the definition of \textit{negligible functions}, that
is quite technical, but relatively easy to work with. Then, we show that
if a given jet $p$ is strongly implied by a given ideal $I$ in any allowed direction
of $I$, then the jet $p$ is implied by $I$ (Corollary \ref{cor_strong_directional_implication_imply_implication}).
 We also provide an easy algorithm to calculate the set of allowed directions
of a given ideal (Corollary \ref{old_new_cor_on_how_to_calc_allow_with_inclusion}) -- if we are given a basis of an ideal $I$, this algorithm always produces a set that contains $Allow(I)$, and sometimes it calculates $Allow(I)$ exactly.
We routinely use these tools in \cite{FS2} when we exhibit many examples
of closed ideals.

\subsection*{Calculating the closure of an ideal} Recall
once more that the closure of $I$ consists of all the jets implied by $I$,
and that $I$ is closed if and only if it is equal to its closure. How can we calculate the closure of a given ideal $I$, i.e., how can
we calculate a basis for the space of all implied jets of a given ideal?
We answer this question in Section \ref{section_algorithm}. In this
section we show how to realize the conditions defining implied jets as conditions
about the existence of sections of some semi-algebraic bundles (bundles in
the sense of Fefferman-Luli; see \cite{FL1}). Consequently, we also explain how, in principle, we
can calculate the closure of a given ideal using well known results regarding
the spaces of sections of such bundles. This algorithm is based on results from \cite{FL2,FL3}.

\

We stress that the tools in Section \ref{section_calculation_method}  provide useful information in many particular cases in which we want to show that a given ideal implies a given jet, but (as far as we know) are not guaranteed to work in general. On the other hand, the algorithm in Section \ref{section_algorithm} is guaranteed to compute the closure of any given ideal of $m$-jets, but is unfortunately far too labor-intensive to use in practice.

\subsection*{Acknowledgments}We are grateful to Nahum Zobin for asking the beautiful Question \ref{semi_algebraic_question}. We thank the participants of the Whitney Extension Problems Workshops over the years, the institutions that hosted and supported these workshops -- American Institute if Mathematics, Banff International Research Station, the College of William and Marry, the Fields Institute, the Technion, Trinity College Dublin and the  University of Texas at Austin -- and the NSF, ONR, AFOSR and BSF for generously supporting these workshops. 

\section{Closed ideals and a necessary condition}

As said, we start a rigorous construction of our theory below, and do not rely on anything defined in the introduction. Let us start by fixing notation.

\subsection{Notation}We work in
$\mathbb{R}^n$ with Euclidean
metric, and most of the notation we use is standard. 

\

\textbf{Functions.} For an open subset $U\subset\mathbb{R}^n$ and $m\in\mathbb{N}\cup\{0\}$
we denote by $C^m(U)$ the space of real valued $m$-times continuously differentiable
functions. 
We use multi-index notation for derivatives: for a multi-index $\alpha:=(\alpha_1,\alpha_2,\dots,\alpha_n)\in(\mathbb{N}\cup\{0\})^n$
we set $\abs{\alpha}:=\alpha_1+\dots+\alpha_n$ and $\alpha!:=\alpha_1!\alpha_2!\dots\alpha_n!$.
For $\vec x=(x_1,x_2,\dots, x_n)\in\mathbb{R}^n$ we set $\vec x^{\alpha}:=x_1^{\alpha_1}x_2^{\alpha_2}\dots
x_n^{\alpha_n}$. If $\abs{\alpha}\leq m$ and $f\in C^m(U)$ we write $$f^{(\alpha)}:=\frac{\partial^{|\alpha|}f}{\partial
x_1^{\alpha_1}\partial x_2^{\alpha_2}\cdots\partial
x_n^{\alpha_n}}$$ when $\abs{\alpha}\neq0$, and $f^{(\alpha)}:=f$ when $\abs{\alpha}=0$.
We sometimes write $\partial^\alpha f$ or $\partial_\alpha f$ instead of
$f^{(\alpha)}$. When it is clear from the context we will sometimes write $f_{xy}$
or $\partial^2_{xy}f$ when $\alpha=(1,1,0,0,\dots,0)$ and $(x,y,z,\dots)$
is a coordinate system on $\mathbb{R}^n$, and other such similar conventional
notation. 

\

\textbf{Asymptotic behaviors.} For $f,g\in C^m(U)$ and $\vec x_0\in U$ we
write $$f(\vec x)=o(g(\vec x))\text{ as }\vec x\to\vec x_0$$ if$$\frac{f(\vec
x)}{g(\vec x)}\to0\text{ as }\vec x\to\vec x_0.$$ If $\vec x_0$ is not specified
then $\vec x_0=\vec 0$, unless otherwise is clear from the context. We write
$$f(\vec x)=O(g(\vec
x))\text{ as }\vec x\to\vec x_0$$ if$$\abs{\frac{f(\vec x)}{g(\vec x)}}\text{
is bounded in some punctured neighborhood of }\vec x_0.$$ Again, if $\vec
x_0$ is not specified then $\vec x_0=\vec
0$, unless otherwise is clear from the context. 

\

\textbf{Balls, cones, annuli and other geometric objects.}  For $r>0$ we
set $B(r):=\{\vec
x\in\mathbb{R}^n:\abs{\vec
x}<r\}$ and set $B^{\times}(r):=B(r)\setminus\{\vec 0\}$, where $n$ should
be
clear from the context. For two sets $X,Y\subset\mathbb{R}^n$ we
set $$\text{dist}(X,Y):=\inf\{\abs{\vec x-\vec y}:\vec x\in X,\vec y\in Y\}\text{ if }X\neq\emptyset\text{ and }Y\neq\emptyset\text{ };\text{ }\text{dist}(X,Y)=+\infty\text{ otherwise,}$$
and if one of them is a singleton we write $\text{dist}(\vec x,Y)$ instead
of $\text{dist}(\{\vec x\},Y)$, and similarly if the other is a singleton. We denote
as usual $\mathbb{R}^n\supset S^{n-1}:=\{\abs{\vec x}=1\}$ and refer to points in
$S^{n-1}$ as \textit{directions}. For $\Omega\subset S^{n-1}$
and $\delta>0$ we denote (the
dome around $\Omega$ of opening $\delta$) $$D(\Omega,\delta):=\{\vec
\omega\in S^{n-1}: \text{dist}(\omega,\Omega)<\delta\}.$$Note that in particular if $\Omega=\emptyset$ then $D(\Omega,\delta)=\emptyset$. Given (radius)
$r>0$  we set $$\Gamma(\Omega,\delta,r):=\bigcup\limits_{r'\in(0,r)}r' D(\Omega,\delta)=\{\vec
x\in\mathbb{R}^n:0<\abs{\vec x}<r,\text{dist}(\frac{\vec x}{\abs{\vec x}},\Omega)<\delta\}.$$Note that in particular
if $\Omega=\emptyset$ then $\Gamma(\Omega,\delta,r)=\emptyset$. 

For
a singleton $\omega\in S^{n-1}$ we write $D(\omega,\delta)$ and $\Gamma(\omega,\delta,r)$
instead of  $D(\{\omega\},\delta)$ and $\Gamma(\{\omega\},\delta,r)$ (respectively).
We call a set of the form $\Gamma(\omega,\delta,r)$ (resp. $\Gamma(\Omega,\delta,r)$)
a cone in the direction $\omega$ (around the set of directions $\Omega\subset
S^{n-1}$), or a conic neighborhood of $\omega$ (of $\Omega$). Note that some
cones are non-convex. Also note that for any $\Omega\subset
S^{n-1}$, $\delta>0$ and $r>0$, we have that $\Gamma(\Omega,\delta,\epsilon)$
is open in $\mathbb{R}^n$, and $D(\Omega,\delta)$ is open in $S^{n-1}$ (in the
restricted topology). Finally (when $n$ is clear from the context), for $\mathbb{R}\ni
K\geq 1$ and $\mathbb{R}\ni r>0$ we define the annulus$$\text{Ann}_K(r):=\{\vec
x\in\mathbb{R}^n:\frac{r}{K}<\abs{\vec x}<Kr\}.$$

\subsection{Basic Definitions}

\begin{definition}[jet spaces] Let $m,n\in\mathbb{N}$. For a real
valued function $f\in C^m(\mathbb{R}^n)$ we define the \textit{$m^{\text{th}}$ degree
jet of $f$ at the origin} (or $m$-jet), denoted by $J^m(f)$, to be the $m^{\text{th}}$
degree Taylor polynomial (that has degree at most $m$) of $f$ at the origin.
The jet $J^m(f)$ is an element
of the space of (at most) $m^{\text{th}}$ degree (Taylor) polynomials in
$n$ variables, which we call the $m^{\text{th}}$ degree jet
space at the origin, and denote by $\mathcal{P}^m(\mathbb{R}^n)$. This is naturally
a (commutative unital)
ring, with multiplication given by $PQ:=J^{m}(PQ)$, however it is not a integral domain, e.g., in $\mathcal{P}^m(\mathbb{R}^1)$
we always have  $x^{m}\cdot x=0$. We stress that by $PQ$ (and $P\cdot Q$) we always mean jet product, and not the produce in the ring of polynomials, unless we say otherwise.
\end{definition}
Note that  $\mathcal{P}^m(\mathbb{R}^n)$ is a finite
dimensional
vector space, and that for
any $f,g\in C^m(\mathbb{R}^n)$ we have $J^m(f\cdot g)=J^m(f)\cdot J^m(g)$. In order
to avoid confusion we will use the following notation:

\begin{notation}
Let $\mathcal{F}\subset \mathcal{P}^m(\mathbb{R}^n)$ be a family of jets. We denote by $\langle \mathcal{F} \rangle_m$
the ideal in $\mathcal{P}^m(\mathbb{R}^n)$
generated by $\mathcal{F}$. For instance, in the single variable case we
have $\langle x \rangle_m=\text{span}_{\mathbb{R}}\{x,x^2,\dots,x^m\}$.
\end{notation}

\begin{definition}[order of vanishing of functions]\label{order-of-vanishing-def}Let
$m,n\in\mathbb{N}$, and let $f\in C^m(\mathbb{R}^n)$. \textit{The order of vanishing
(at the origin)} of $f$ is said to be

\begin{itemize}
\item the minimal $1\leq m'\leq m$ such that $J^{m'}(f)\neq0$, if
such $m'$ exists and $f(\vec 0)=0$;
\item more than $m$, if $J^m(f)=0$;
\item 0, if $f(\vec 0)\neq 0$. 
\end{itemize}
\textit{The order of vanishing of a jet} $p\in \mathcal{P}^m(\mathbb{R}^n)$
is the order of vanishing of the polynomial $p$. 
  
\end{definition} 

Note that every $C^m$-diffeomorphism $\phi:\mathbb{R}^n\to\mathbb{R}^n$ that
fixes the origin induces an automorphism of $\mathcal{P}^m(\mathbb{R}^n)$
defined by $p\mapsto J^{m}(p\circ\phi)$. This automorphism does \textit{not} preserve in general the degree (as a polynomial) of a jet, however it always preserves the order of vanishing.

\begin{definition}
We set $$\mathcal{P}_0^m(\mathbb{R}^n):= \{p\in \mathcal{P}^m(\mathbb{R}^n)|\text{
the order
of vanishing of }p\text{ is not }0\}.$$  
\end{definition} Note that $\mathcal{P}_0^m(\mathbb R ^n)$ is a (commutative) ring, but
it is not unital. It is also a finite dimensional vector space. Equivalently,
one can define $\mathcal{P}_0^m(\mathbb{R}^n)$ as the unique maximal ideal
in $\mathcal{P}^m(\mathbb{R}^n)$.

\begin{definition}\label{defintion_of_I_m_of_E}Let $E\subset \mathbb{R}^n$ be a
closed set containing the
origin. We define $$I^m(E):=\{p\in \mathcal{P}^m(\mathbb{R}^n)
| \exists f\in C^m(\mathbb{R}^n), f|_E=0,J^m(f)=p\}.$$
\end{definition}

\subsubsection{Simple observations}\label{simple_observations} Let $E,E'\subset
\mathbb{R}^n$ be two closed sets, both containing the origin. 
\begin{enumerate}[label=(\roman*)]
\item $I^m(E)$ only depends on the local behavior of $E$ around the origin:
if there exists $r>0$ such that $E\cap B(r)=E'\cap B(r)$,
then $I^m(E)=I^m(E')$. 

\item $I^m(E)$ is an ideal in $\mathcal{P}^m(\mathbb{R}^n)$: indeed, let $p\in I^m(E)$
and
$p'\in \mathcal{P}^m(\mathbb{R}^n)$. Then, there exists $f\in C^m(\mathbb{R}^n)$ such that
$J^m(f)=p$
and $f|_E=0$. Note that $f\cdot p'$ is a $C^m$ function on $\mathbb{R}^n$
that vanishes on $E$, and $J^{m}(f\cdot p')=p\cdot p'$. We conclude
that $p\cdot p'\in I^m(E)$, i.e. $I^m(E)\lhd \mathcal{P}^m(\mathbb{R}^n)$.

\item $I^m(E)\subset \mathcal{P}_0^m(\mathbb{R}^n)$, in fact it is an ideal:
$I^m(E)\lhd \mathcal{P}_0^{m}(\mathbb{R}^n)$.

\item $I^m(E\cup E')\subset I^m(E)\cap I^m(E')$. 

\item $I^{m}(E)\subset I^m(E\cap E')$. 

\end{enumerate}

\subsection{Allowed and Forbidden directions of an ideal and tangent directions of a set}
\begin{definition}[allowed and forbidden directions of jets and ideals]\label{definition_allowed_and_forbidden_directions}
\
\begin{itemize}
\item Let $p_{1},p_2,\dots,p_L\in\mathcal{P}_0^m(\mathbb{R}^n)$ be jets and let $\omega\in S^{n-1}$ be a direction. We say that
$\omega$ is a \textit{forbidden direction} of $p_1,\dots p_L$ if the following
holds:
\begin{multline}\label{equiv_cond_one_for_forbidden}
  \text{There exist }c,\delta,r>0 \text{
such that} \\ \abs{p_1(\vec x)}+\abs{p_2(\vec x)}+\dots+\abs{p_L(\vec x)}> c\cdot\abs{\vec
{x}}^m \text { for all }\vec x\in\Gamma(\omega,\delta,r).
\end{multline}Otherwise, we say that $\omega$ is an \textit{allowed direction}
of $p_1,\dots p_L$.
\item Let $I\lhd\mathcal{P}_0^{m}(\mathbb{R}^n)$ be an ideal. A direction $\omega\in
S^{n-1}$
is said to be a \textit{forbidden direction} of $I$, if there exist $p_{1},p_2,\dots,p_L\in
I$ such that $\omega$ is a forbidden direction of $p_{1},p_2,\dots,p_L$. Otherwise, we say that
$\omega$ is an \textit{allowed direction} of $I$.
\end{itemize}
We denote
the sets of forbidden and allowed directions of $I$ by $Forb(I)$ and $Allow(I)$
respectively. Note that the set $Allow(I)\subset S^{n-1}$ is always closed.
\end{definition}

\begin{definition}[tangent, forbidden and allowed directions of a set]\label{def_tangent_directions_of_a_set}Let
$E\subset\mathbb{R}^n$ be a closed subset containing the origin
and let $\omega\in S^{n-1}$. We say that \emph{$E$ is tangent to the direction
$\omega$} if \begin{gather}E\cap\Gamma(\omega,\delta,r)\neq\emptyset\text{
for all }\delta>0 \text{ and all }r>0.\end{gather} We denote by $T(E)\subset
S^{n-1}$ the set of all directions to which $E$
is tangent. Finally, for a fixed $m\in\mathbb{N}$, we say that \emph{$\omega$
is a forbidden (resp. allowed) direction
of $E$}, if $\omega$ is a forbidden (allowed) direction of $I^m(E)$. We denote
the sets of forbidden and allowed directions of $E$ by $Forb(E)$ and $Allow(E)$
respectively (here $m$ should be implicitly
understood from the context).\end{definition}

\begin{lemma}\label{tangent_subset_bad_lemma}Let $\vec 0\in E\subset\mathbb{R}^n$
be a closed subset and $m\in\mathbb{N}$. Then, $T(E)\subset Allow(E)$.\end{lemma}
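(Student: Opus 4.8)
The plan is to prove the contrapositive: if $\omega \in S^{n-1}$ is a forbidden direction of $E$, then $\omega \notin T(E)$, i.e. there exist $\delta, r > 0$ with $E \cap \Gamma(\omega, \delta, r) = \emptyset$. So suppose $\omega \in Forb(E) = Forb(I^m(E))$. By Definition \ref{definition_allowed_and_forbidden_directions}, there exist jets $p_1, \dots, p_L \in I^m(E)$ and constants $c, \delta, r > 0$ such that $\abs{p_1(\vec x)} + \dots + \abs{p_L(\vec x)} > c \abs{\vec x}^m$ for all $\vec x \in \Gamma(\omega, \delta, r)$.

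First I would unpack what membership $p_j \in I^m(E)$ gives us: for each $j$ there is a function $f_j \in C^m(\mathbb{R}^n)$ with $J^m(f_j) = p_j$ and $f_j|_E = 0$. Since $J^m(f_j) = p_j$, Taylor's theorem gives $f_j(\vec x) = p_j(\vec x) + o(\abs{\vec x}^m)$ as $\vec x \to \vec 0$; more precisely, $\abs{f_j(\vec x) - p_j(\vec x)} \le \eta(\abs{\vec x}) \abs{\vec x}^m$ where $\eta(t) \to 0$ as $t \to 0^+$. Summing over $j$ and using the reverse triangle inequality, for $\vec x$ in a small punctured ball $B^\times(r')$ we get
\begin{gather*}
\abs{f_1(\vec x)} + \dots + \abs{f_L(\vec x)} \geq \big(\abs{p_1(\vec x)} + \dots + \abs{p_L(\vec x)}\big) - L\,\eta(\abs{\vec x})\abs{\vec x}^m.
\end{gather*}

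Now shrink $r$ so that $L\,\eta(t) < c/2$ for $t < r$ (also keeping $r$ below the earlier $r$ and below $r'$). Then for $\vec x \in \Gamma(\omega, \delta, r)$ we have $\abs{f_1(\vec x)} + \dots + \abs{f_L(\vec x)} > c\abs{\vec x}^m - \tfrac{c}{2}\abs{\vec x}^m = \tfrac{c}{2}\abs{\vec x}^m > 0$, since $\vec x \neq \vec 0$ on $\Gamma(\omega,\delta,r)$. In particular, for each such $\vec x$ at least one $f_j(\vec x) \neq 0$, so $\vec x \notin E$ (as every $f_j$ vanishes on $E$). Hence $E \cap \Gamma(\omega, \delta, r) = \emptyset$, which by Definition \ref{def_tangent_directions_of_a_set} says $\omega \notin T(E)$. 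This proves $T(E) \subset Allow(E)$.

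There is no serious obstacle here — the only mild care needed is making the $o(\abs{\vec x}^m)$ bound from Taylor's theorem uniform enough to absorb it into the lower bound $c\abs{\vec x}^m$; this is routine since a single modulus-of-continuity function $\eta$ works simultaneously for the finitely many functions $f_1, \dots, f_L$ (take the max of the individual ones). One should also note that $\Gamma(\omega,\delta,r)$ is nonempty for every $\delta, r > 0$, so the emptiness conclusion is genuinely a statement about $\omega$ not being tangent rather than a vacuous one.
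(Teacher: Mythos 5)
Your proof is correct and rests on exactly the same estimate as the paper's: writing each $f_j=p_j+o(\abs{\vec x}^m)$ and using the reverse triangle inequality against the lower bound $c\abs{\vec x}^m$ from the forbidden-direction condition. The only difference is organizational — you argue the contrapositive by showing the whole small cone is disjoint from $E$, while the paper argues by contradiction along a sequence of points of $E$ in the cone; the content is identical.
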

\begin{proof}Let $\omega\in T(E)$. Assume towards a contradiction that $\omega\in
Forb(E)=Forb(I^m(E))$. By Definition \ref{definition_allowed_and_forbidden_directions}
and (\ref{equiv_cond_one_for_forbidden}) there exists $p_{1},\dots,p_L\in I^m(E)$ such
that the following holds:
\begin{multline}\label{tangent_lemma_new_random_label}
  \text{There exist }c,\delta,r>0 \text{
such that} \\ \abs{p_1(\vec x)}+\abs{p_2(\vec x)}+\dots+\abs{p_L(\vec x)}>
c\cdot\abs{\vec
{x}}^m \text { for all }\vec x\in\Gamma(\omega,\delta,r).
\end{multline}
By Definition \ref{defintion_of_I_m_of_E} for $1\leq l\leq L$ there exists $f_l\in C^m(\mathbb{R}^n)$
such that $f_l(\vec x)=p_l(\vec x)+r_l(\vec x)$, where \begin{gather}\label{little_o_for_q}r_l(\vec
x)=o(\abs{\vec
x}^m)\end{gather}and $f_{l}|_E=0$. As $\omega\in T(E)$ there exists a sequence
of points 
\begin{gather}\label{new_sequence_that_goes_to_the_origin}\{\vec x_i\}_{i=1}^{\infty}\subset
\Gamma(\omega,\delta,r)\cap E\text{ converging to the origin.}\end{gather}
On any of these points we have 
\begin{gather}\label{sequence_that_goes_to_the_origin}0=\abs{f_{1}(\vec x_i)}+\dots+\abs{f_{L}(\vec x_i)}=\abs{p_{1}(\vec
x_i)+r_{1}(\vec x_i)}+\dots+\abs{p_{L}(\vec
x_i)+r_{L}(\vec x_i)},\end{gather} and dividing (\ref{sequence_that_goes_to_the_origin})
by $\abs{x_i}^m\neq 0$ we get 
\begin{multline}\label{tangent_lemma_random_label}0= \frac{\abs{p_{1}(\vec
x_i)+r_{1}(\vec x_i)}+\dots+\abs{p_{L}(\vec
x_i)+r_{L}(\vec x_i)}}{\abs{\vec x_i}^m} \\ \geq \frac{\abs{p_{1}(\vec x_i)}+\dots+\abs{p_{L}(\vec x_i)}}{\abs{\vec
x_i}^m}-\frac{\abs{r_{1}(\vec x_i)}+\dots+\abs{r_{L}(\vec
x_i)}}{\abs{\vec
x_i}^m}.\end{multline} Now (\ref{tangent_lemma_random_label}) is a contradiction,
as (\ref{tangent_lemma_new_random_label}) and (\ref{new_sequence_that_goes_to_the_origin})
imply that $\frac{\abs{p_{1}(\vec x_i)}+\dots+\abs{p_{L}(\vec
x_i)}}{\abs{\vec
x_i}^m}$ is bounded from below by $c$, while (\ref{little_o_for_q}) and (\ref{new_sequence_that_goes_to_the_origin})
imply that $\frac{\abs{r_{1}(\vec x_i)}+\dots+\abs{r_{L}(\vec
x_i)}}{\abs{\vec
x_i}^m}$ goes to zero as $\vec x_i$ approaches the origin.\end{proof}

\begin{lemma}\label{relative_of_tangent_subset_bad_lemma}Let $\vec 0\in E\subset\mathbb{R}^n$
be a closed subset, let $m\in\mathbb{N}$ and denote $\Omega=Allow(I^m(E))$. Then, given $\delta>0$ there exists $\bar r>0$ such that \begin{gather}\label{relative_of_tangent_lemma_label_1}E\cap B^\times(\bar r)\subset \{\vec x\in B^\times(\bar r):\text{dist}(\frac{\vec x}{\abs{\vec x}},\Omega)<\delta\}\text{ if }\Omega\neq\emptyset\text{ and } E\cap
B^\times(\bar r)=\emptyset\text{ if }\Omega=\emptyset. \end{gather} \end{lemma}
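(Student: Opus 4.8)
The plan is to deduce the lemma from Lemma \ref{tangent_subset_bad_lemma} together with one compactness argument on $S^{n-1}$, handling the two cases in (\ref{relative_of_tangent_lemma_label_1}) uniformly. The first point I would make is that, with the convention $\text{dist}(\omega,\emptyset)=+\infty$, the set
$$K:=\{\omega\in S^{n-1}:\text{dist}(\omega,\Omega)\geq\delta\}$$
is $S^{n-1}$ when $\Omega=\emptyset$, and in general is a closed (hence compact) subset of $S^{n-1}$ that is disjoint from $\Omega$ (if $\omega\in\Omega$ then $\text{dist}(\omega,\Omega)=0<\delta$). Since $\Omega=Allow(I^m(E))$, Lemma \ref{tangent_subset_bad_lemma} gives $T(E)\subset\Omega$, and therefore $K\cap T(E)=\emptyset$; that is, $E$ is not tangent to any direction lying in $K$.

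Next I would unwind Definition \ref{def_tangent_directions_of_a_set}: for each $\omega\in K$, failure of tangency means there exist $\delta_\omega,r_\omega>0$ with $E\cap\Gamma(\omega,\delta_\omega,r_\omega)=\emptyset$. The domes $\{D(\omega,\delta_\omega)\}_{\omega\in K}$ form an open cover of the compact set $K$, so one can extract a finite subcover $D(\omega_1,\delta_{\omega_1}),\dots,D(\omega_N,\delta_{\omega_N})$ (if $K=\emptyset$ the conclusion of the lemma is vacuous and one may take $\bar r=1$). I would then set $\bar r:=\min_{1\leq j\leq N}r_{\omega_j}>0$.

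Finally I would verify that this $\bar r$ works. Suppose $\vec x\in E\cap B^\times(\bar r)$ and, towards a contradiction, $\text{dist}(\tfrac{\vec x}{\abs{\vec x}},\Omega)\geq\delta$, i.e.\ $\tfrac{\vec x}{\abs{\vec x}}\in K$. Then $\tfrac{\vec x}{\abs{\vec x}}\in D(\omega_j,\delta_{\omega_j})$ for some $j$, and since $0<\abs{\vec x}<\bar r\leq r_{\omega_j}$, this forces $\vec x\in\Gamma(\omega_j,\delta_{\omega_j},r_{\omega_j})$, contradicting $E\cap\Gamma(\omega_j,\delta_{\omega_j},r_{\omega_j})=\emptyset$. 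Hence every $\vec x\in E\cap B^\times(\bar r)$ satisfies $\text{dist}(\tfrac{\vec x}{\abs{\vec x}},\Omega)<\delta$, which is exactly (\ref{relative_of_tangent_lemma_label_1}); note that when $\Omega=\emptyset$ the set on the right-hand side of (\ref{relative_of_tangent_lemma_label_1}) is empty, so this says precisely $E\cap B^\times(\bar r)=\emptyset$. There is no serious obstacle here: the only thing to be careful about is the bookkeeping in the degenerate cases ($\Omega=\emptyset$ or $K=\emptyset$). Conceptually, the lemma is just the qualitative statement $T(E)\subset Allow(E)$ of Lemma \ref{tangent_subset_bad_lemma} upgraded, via compactness of $S^{n-1}$, to a conclusion that is \emph{uniform} in the direction.
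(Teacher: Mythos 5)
Your proof is correct and takes essentially the same approach as the paper's: both rest on Lemma \ref{tangent_subset_bad_lemma} together with compactness of $S^{n-1}$. The only cosmetic difference is that the paper argues by contradiction via a convergent subsequence (treating $\Omega=\emptyset$ separately), whereas you extract a finite subcover of the domes $D(\omega,\delta_\omega)$ and handle both cases uniformly.
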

\begin{proof}Assume $\Omega\neq\emptyset$ and assume the lemma does not hold. Then, there exists $\delta>0$ and a sequence of points $\{\vec x_i\}_{i=1}^{\infty}\subset E\setminus\{\vec0\}$ converging to the origin, such that $\text{dist}(\frac{\vec x_i}{\abs{\vec x_i}},\Omega)\geq \delta$, for any $i\in\mathbb{N}$. Since $S^{n-1}$ is compact, by possibly diluting the sequence $\{\vec x_i\}_{i=1}^{\infty}$ we may assume that $\frac{\vec x_i}{\abs{\vec x_i}}\to \omega^*\in S^{n-1}$ as $i\to\infty$. In particular, $\text{dist}(\omega^*,\Omega)\geq \delta$ and so $\omega^*\in Forb(I^m(E))$. By Lemma \ref{tangent_subset_bad_lemma}, $\omega^*\notin T(E)$ so there exist $\delta^*,r^*>0$ such that $$E\cap\Gamma(\omega^*,\delta^*,r^*)=\emptyset.$$ Since $\vec x_i$ is converging to the origin we may assume $0<\abs{\vec x_i}< r^*$ for large enough $i$, and since $\frac{\vec
x_i}{\abs{\vec x_i}}\to \omega^*$ we may assume $\abs{\frac{\vec x_i}{\abs{\vec x_i}}-\omega^*}<\delta^*$ for large enough $i$. Combining these two we have that for large enough $i$, $$\vec x_i\in \Gamma(\omega^*,\delta^*,r^*).$$But $\vec x_i\in E$, so we have $$E\cap\Gamma(\omega^*,\delta^*,r^*)\neq\emptyset,$$which is a contradiction. The proof of the case $\Omega=\emptyset$ is almost identical but slightly simpler (essentially repeat the proof but omit the condition "$\text{dist}(\frac{\vec
x_i}{\abs{\vec x_i}},\Omega)\geq \delta$, for any $i\in\mathbb{N}$" in the very beginning), thus we leave it to the reader to verify the details. \end{proof}

\subsection{Closed ideals -- a fundamental property of ideals of the form $I^m(E)$}

Recall that (when $n$ is clear from the context) for $K\geq 1$ and $
r>0$ we set $$\text{Ann}_K(r):=\{\vec x\in\mathbb{R}^n:\frac{r}{K}<\abs{\vec
x}<Kr\}.$$

\begin{definition}[implied jets]\label{new_def_implied_jet}Let $I\lhd\mathcal{P}_0^{m}(\mathbb{R}^n)$
be an ideal, denote $\Omega:=Allow(I)$ and let $p\in\mathcal{P}_0^{m}(\mathbb{R}^n)$ be some polynomial. We
say that\textit{ $I$ implies $p$} (or that $p$ is implied by $I$) if there exist a constant $A>0$ and $Q_1,Q_2,\dots, Q_L\in I$ such that the following holds:

For any $\epsilon>0$ there exist $\delta,r>0$ such that for any $0<\rho\leq r$ there exist functions $F,S_1,S_2,\dots,S_L\in C^m(\text{Ann}_4(\rho))$ satisfying \begin{gather}\label{new_def_of_implied_label_1}\abs{\partial^\alpha F(\vec x)}\leq \epsilon \rho^{m-\abs{\alpha}} \text { for all }\vec x\in\text{Ann}_4(\rho)
\text{ and all }\abs{\alpha}\leq m; \end{gather}
\begin{gather}\label{new_def_of_implied_label_2}\abs{\partial^\alpha S_l(\vec
x)}\leq A \rho^{-\abs{\alpha}} \text { for all }\vec x\in\text{Ann}_4(\rho)
\text{, all }\abs{\alpha}\leq m \text{ and all }1\leq l\leq L; \end{gather}
\begin{multline}\label{new_def_of_implied_label_3}p(\vec x)=F(\vec x)+S_1(\vec x)Q_1(\vec x)+S_2(\vec
x)Q_2(\vec x)+\dots+S_L(\vec
x)Q_L(\vec x) \\ \text {for all }\vec x\in\text{Ann}_2(\rho)
\text{ such that }\text{dist}(\frac{\vec x}{\abs{\vec x}},\Omega)<\delta. \end{multline}
\end{definition}

\begin{remark}Definition \ref{new_def_implied_jet} asserts the existence of functions $S_1,\dots,S_L$ for some $L\in\mathbb{N}$. One can define "1-implied jets" by only allowing $L=1$, so for instance (\ref{new_def_of_implied_label_3}) becomes "$p(\vec x)=F(\vec x)+S_1(\vec x)Q_1(\vec x)$ for all $\vec x\in\text{Ann}_2(\rho)$ such that $\text{dist}(\frac{\vec x}{\abs{\vec x}},\Omega)<\delta$". Clearly if a jet is 1-implied by an ideal it is also implied by the same ideal. We do not know whether the converse also holds; in particular we are not able to construct an ideal $I$ and a jet $p\in\mathcal{P}_0^{m}(\mathbb{R}^n)$, such that $p$ is implied by $I$, but $p$ is not 1-implied by $I$. Similar remarks can be made regarding Definition \ref{implied_polynomial_definition} and Definition \ref{strong_implied_polynomial_definition}.\end{remark}

\begin{definition}[the closure of an ideal]\label{the_closure_of_ideal_definition}Let
$I\lhd\mathcal{P}_0^m(\mathbb{R}^n)$ be an ideal. We define its \textit{implication
closure} (or simply \textit{closure}) $cl(I)$ by $$cl(I):=\{p\in\mathcal{P}_0^m(\mathbb{R}^n)|p\text{
is implied by }I\}.$$ We say that $I$ is \textit{closed} if $I=cl(I)$.\end{definition}

\begin{remark}Let $p\in I\lhd\mathcal{P}_0^m(\mathbb{R}^n)$. Then, we can
take $A=1, l=1, Q_1=p, S_1=1$ and $F=0$, and for any $\epsilon>0$ take any $\delta,r>0$ to prove $p\in cl(I)$. Thus, we always
have $I\subset cl(I)$. Moreover, the closure of any ideal is an ideal as well.  
\end{remark}

\begin{lemma}\label{lemma_needed_for_invariance_wrt_coor_changes}Let $I\lhd\mathcal{P}_0^{m}(\mathbb{R}^n)$
be an ideal, denote $\Omega:=Allow(I)$ and let $p\in\mathcal{P}_0^{m}(\mathbb{R}^n)$
be some polynomial that is implied by $I$. Then, given $K\geq4$ there exist
a constant $\tilde A>0$ and $Q_1,Q_2,\dots, Q_L\in I$ such that the following
holds:

For any $\epsilon>0$ there exist $\delta,r>0$ such that for any $0<\rho\leq
r$ there exist functions $F,S_1,S_2,\dots,S_L\in C^m(\mathbb{R}^n)$
satisfying \begin{gather}\label{lemma_for_invariance_label_1}\abs{\partial^\alpha
F(\vec x)}\leq \epsilon \rho^{m-\abs{\alpha}} \text { for all }\vec x\in\mathbb{R}^n
\text{ and all }\abs{\alpha}\leq m; \end{gather}
\begin{gather}\label{lemma_for_invariance_label_2}\abs{\partial^\alpha S_l(\vec
x)}\leq \tilde A \rho^{-\abs{\alpha}} \text{ for all }\vec x\in\mathbb{R}^n
\text{, all }\abs{\alpha}\leq m \text{ and all }1\leq l\leq L; \end{gather}
\begin{multline}\label{lemma_for_invariance_label_3}p(\vec x)=F(\vec x)+S_1(\vec x)Q_1(\vec x)+S_2(\vec
x)Q_2(\vec x)+\dots+S_L(\vec
x)Q_L(\vec x) \\ \text {for all }\vec x\in\text{Ann}_K(\rho)
\text{ such that }\text{dist}(\frac{\vec x}{\abs{\vec x}},\Omega)<\delta.
\end{multline}\end{lemma}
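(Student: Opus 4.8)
The plan is to upgrade the "local" implication data — functions defined only on the annulus $\text{Ann}_4(\rho)$ — to functions defined on all of $\mathbb{R}^n$ that still satisfy the required estimates on the slightly larger annulus $\text{Ann}_K(\rho)$ for a prescribed $K\geq 4$. Since the gap between $4$ and $K$ is only a bounded factor, the idea is a scale-decomposition (Littlewood–Paley / partition-of-unity) argument at dyadic scales. First I would fix $K\geq 4$ and choose an integer $N$ (depending only on $K$) large enough that $\text{Ann}_K(\rho)$ is covered by the union of the annuli $\text{Ann}_4(2^{-j}\rho)$ for $j=-N,\dots,N$ (one can take $N=\lceil \log_2 K\rceil+1$). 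Next I would invoke Definition \ref{new_def_implied_jet} to get the constant $A$ and jets $Q_1,\dots,Q_L\in I$; I would set $\tilde A$ to be $A$ times a constant depending only on $n,m,N$ (to absorb losses from multiplying by cutoffs and from the chain rule), and I would shrink the output $\epsilon,\delta,r$ as needed relative to the input ones.

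The core construction: given $\epsilon>0$ I apply the implication hypothesis with a smaller $\epsilon'$ (say $\epsilon' = \epsilon/(C(n,m,N))$) to obtain $\delta,r>0$; I will output the same $\delta$ and $r/2$ (or a comparable radius). Given $0<\rho\le r/2$, for each $j\in\{-N,\dots,N\}$ apply the hypothesis at scale $\rho_j:=2^{j}\rho$ to get $F^{(j)},S_1^{(j)},\dots,S_L^{(j)}\in C^m(\text{Ann}_4(\rho_j))$ satisfying (\ref{new_def_of_implied_label_1})–(\ref{new_def_of_implied_label_3}) at that scale. Now take a smooth dyadic partition of unity $\{\chi_j\}$ subordinate to the covering of $\text{Ann}_K(\rho)$ by the $\text{Ann}_2(\rho_j)$'s, with $\chi_j$ supported in $\text{Ann}_4(\rho_j)$ and $\sum_j \chi_j \equiv 1$ on $\text{Ann}_K(\rho)$, where $\abs{\partial^\alpha \chi_j}\le C(n,m,N)\rho_j^{-\abs{\alpha}}$; cut each $\chi_j$ off further away from the origin so that everything is globally defined and vanishes near $\vec 0$ and near infinity. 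Then set $S_l := \sum_j \chi_j S_l^{(j)}$ and $F := p\cdot(1-\sum_j\chi_j) + \sum_j \chi_j F^{(j)}$, all extended by zero to $\mathbb{R}^n$. By construction, on $\text{Ann}_K(\rho)\cap\{\text{dist}(\vec x/\abs{\vec x},\Omega)<\delta\}$ we have $\sum_j\chi_j=1$, so $F + \sum_l S_l Q_l = \sum_j \chi_j\big(F^{(j)} + \sum_l S_l^{(j)}Q_l\big) = \sum_j\chi_j\, p = p$, which is (\ref{lemma_for_invariance_label_3}). The estimates (\ref{lemma_for_invariance_label_1})–(\ref{lemma_for_invariance_label_2}) follow from the Leibniz rule: on the support of $\chi_j$ we have $\abs{\vec x}\sim \rho_j\sim\rho$ (within the bounded range $2^{-N}$ to $2^{N}$), the derivatives of $\chi_j$ cost $\rho_j^{-\abs{\alpha'}}\lesssim \rho^{-\abs{\alpha'}}$ up to the constant $2^{N\cdot m}$, the factors $\rho_j^{m-\abs{\alpha}}$ and $\rho_j^{-\abs{\alpha}}$ are comparable to $\rho^{m-\abs{\alpha}}$ and $\rho^{-\abs{\alpha}}$, and only boundedly many ($2N+1$) terms overlap at any point; the $p\cdot(1-\sum\chi_j)$ term in $F$ is supported away from $\text{Ann}_K(\rho)$ (in the relevant directional sector) so it does not affect the estimate there, but to be safe one cuts $F$ itself off to be supported in a neighborhood of $\text{Ann}_K(\rho)$ so (\ref{lemma_for_invariance_label_1}) holds on all of $\mathbb{R}^n$.

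The main obstacle — really the only delicate point — is bookkeeping the constants so that the final estimates are genuinely of the form $\epsilon\rho^{m-\abs{\alpha}}$ and $\tilde A\rho^{-\abs{\alpha}}$ with $\tilde A$ and the reduction $\epsilon\mapsto\epsilon'$ depending only on $n,m,K$ (and hence not on $\rho$ or on the particular jet): one must check that multiplying a function obeying (\ref{new_def_of_implied_label_2}) at scale $\rho_j$ by $\chi_j$, differentiating up to order $m$, and summing over the $O(N)$ overlapping pieces produces a loss that is a fixed function of $n,m,N$ only. There is also a minor point: the hypothesis gives $F^{(j)},S_l^{(j)}$ only on $\text{Ann}_4(\rho_j)$, but since $\chi_j$ is supported in $\text{Ann}_4(\rho_j)$ and vanishes to all orders at its boundary, the products $\chi_j F^{(j)}$ and $\chi_j S_l^{(j)}$ extend by $0$ to $C^m(\mathbb{R}^n)$ with no loss. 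I expect the argument to be short once the dyadic covering and the partition of unity are set up; the estimate verification is routine Leibniz-rule and geometric-series manipulation.
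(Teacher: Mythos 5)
Your proposal is correct and is essentially the paper's own proof: both patch the annulus-scale data over a bounded number of comparable dyadic scales with a partition of unity, pre-shrinking $\epsilon$ and enlarging $A$ by constants depending only on $m,n,K$. Just make sure each $\chi_j$ is supported inside $\mathrm{Ann}_2(\rho_j)$ (where the identity (\ref{new_def_of_implied_label_3}) actually holds, not merely where $F^{(j)},S^{(j)}_l$ are defined), and that the output radius is $r/2^N$ rather than $r/2$ so that every scale $\rho_j=2^j\rho$ stays below $r$.
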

\begin{proof}Let  $A>0$ and $Q_1,Q_2,\dots, Q_L\in I$ be as in Definition
\ref{new_def_implied_jet}, and let $K\geq4$. Fix $\epsilon>0$ and set $\tilde
\epsilon:=\frac{\epsilon}{2C''}$, where $C''$ is a constant depending only
on $m,n,K$ and $A$, to be determined below. Corresponding to $\tilde\epsilon$, let $\delta,r>0$
be as in Definition
\ref{new_def_implied_jet}, and set $\tilde r=\frac{r}{1000K}$. Suppose $0<\rho\leq\frac{r}{1000 K}$. We introduce a
partition of unity $\{\theta_\nu\}_{\nu=1,\dots,\nu_{\max}}$, satisfying:
\begin{itemize}
\item For all $\nu=1,\dots,\nu_{\max}$, $\theta_\nu\in C^\infty(\mathbb{R}^n)$ and $\text{supp}\theta_\nu\subset \text{Ann}_2(\rho_\nu)$ for some $\frac{\rho}{1000K}\leq\rho_\nu\leq1000K\rho$. 
\item $\nu_{\max}\leq C$;
\item $1=\sum\limits_{\nu=1}^{\nu_{\max}}\theta_\nu(\vec x)$ for any $\vec x\in\text{Ann}_{40K}(\rho)$;
\item $\abs{\partial^\alpha\theta_\nu(\vec x)}\leq C'\rho_\nu^{-\abs{\alpha}}\leq(1000K)^mC'\rho^{-\abs{\alpha}}$ for any $\vec x\in\mathbb{R}^n$, $\nu=1,\dots,\nu_{\max}$ and  $\abs{\alpha}\leq m$, 
\end{itemize}where above $C$ and $C'$ are positive constants depending only on $m,n,K$ and $A$. By Definition
\ref{new_def_implied_jet} for each $\nu=1,\dots,\nu_{\max}$ there exist functions $F^\nu,S_1^\nu,S_2^\nu,\dots,S_L^\nu\in C^m(\text{Ann}_4(\rho_\nu))$
satisfying \begin{gather}\label{lemma_for_invariance_proof_label_1}\abs{\partial^\alpha
F^\nu(\vec x)}\leq \tilde\epsilon \rho_\nu^{m-\abs{\alpha}}\leq  (1000K)^m\tilde\epsilon \rho^{m-\abs{\alpha}} \text { for all }\vec x\in\text{Ann}_4(\rho_\nu)
\text{ and all }\abs{\alpha}\leq m; \end{gather}
\begin{gather}\label{lemma_for_invariance_proof_label_2}\abs{\partial^\alpha S^\nu_l(\vec
x)}\leq A \rho_\nu^{-\abs{\alpha}}\leq(1000K)^mA\rho^{-\abs{\alpha}} \text { for all }\vec x\in\text{Ann}_4(\rho)
\text{, }\abs{\alpha}\leq m \text{ and all }1\leq l\leq L; \end{gather}
\begin{multline}\label{lemma_for_invariance_proof_label_3}p(\vec x)=F^\nu(\vec x)+S_1^\nu(\vec x)Q_1(\vec x)+S_2^\nu(\vec
x)Q_2(\vec x)+\dots+S_L^\nu(\vec
x)Q_L(\vec x) \\ \text {for all }\vec x\in\text{Ann}_2(\rho_\nu)
\text{ such that }\text{dist}(\frac{\vec x}{\abs{\vec x}},\Omega)<\delta.
\end{multline} Define $F=\sum\limits_{\nu=1}^{\nu_{\max}}\theta_\nu F^\nu\in C^m(\mathbb{R}^n)$ and for $l=1,\dots,L$ define $S_{l}=\sum\limits_{\nu=1}^{\nu_{\max}}\theta_\nu S_l^\nu\in
C^m(\mathbb{R}^n)$. Then, (\ref{lemma_for_invariance_proof_label_1})-(\ref{lemma_for_invariance_proof_label_3}) together with the properties of $\{\theta_\nu\}_{\nu=1,\dots,\nu_{\max}}$ imply that \begin{gather}\label{lemma_for_invariance_proof_label_4}\abs{\partial^\alpha
F(\vec x)}\leq C''\tilde \epsilon \rho^{m-\abs{\alpha}}=\frac{\epsilon}{2} \rho^{m-\abs{\alpha}}< \epsilon
\rho^{m-\abs{\alpha}} \text { for all
}\vec x\in\mathbb{R}^n
\text{ and all }\abs{\alpha}\leq m; \end{gather}
\begin{gather}\label{lemma_for_invariance_proof_label_5}\abs{\partial^\alpha
S_l(\vec
x)}\leq \tilde A \rho_\nu^{-\abs{\alpha}} \text
{ for all }\vec x\in\mathbb{R}^n
\text{, }\abs{\alpha}\leq m \text{ and all }1\leq l\leq L; \end{gather}
\begin{multline}\label{lemma_for_invariance_proof_label_6}p(\vec x)=\sum\limits_{\nu=1}^{\nu_{\max}} \theta_\nu(\vec x) p(\vec x)\\=\sum\limits_{\nu=1}^{\nu_{\max}} \theta_\nu(\vec x)[F^\nu(\vec x)+S_1^\nu(\vec x)Q_1(\vec x)+S_2^\nu(\vec
x)Q_2(\vec x)+\dots+S_L^\nu(\vec
x)Q_L(\vec x)]\\=F(\vec x)+S_1(\vec x)Q_1(\vec x)+S_2(\vec
x)Q_2(\vec x)+\dots+S_L(\vec
x)Q_L(\vec x) \\ \text {for all }\vec x\in\text{Ann}_K(\rho)
\text{ such that }\text{dist}(\frac{\vec x}{\abs{\vec x}},\Omega)<\delta,
\end{multline} where above $C''$ and $\tilde A$ are positive constants depending only
on $m,n,K$ and $A$. 

Finally, for a fixed $\epsilon>0$ we let $\delta,\tilde r>0$ be as above. So for a given $0<\rho \leq \tilde r$ we have produced functions $F,S_1,S_2,\dots,S_L\in C^m(\mathbb{R}^n)$
satisfying (\ref{lemma_for_invariance_proof_label_4})-(\ref{lemma_for_invariance_proof_label_6}), which are equivalent to (\ref{lemma_for_invariance_label_1})-(\ref{lemma_for_invariance_label_3}), and so the lemma holds.\end{proof}

\begin{lemma}Definition \ref{new_def_implied_jet}
is invariant with respect
to $C^m$ coordinate changes around the origin, namely: let $I\lhd\mathcal{P}_0^{m}(\mathbb{R}^n)$
be an ideal, let $p\in\mathcal{P}_0^{m}(\mathbb{R}^n)$
be some polynomial, and assume $I$ implies $p$. Let $\phi:\mathbb{R}^n\to\mathbb{R}^n$  be  a $C^m$-diffeomorphism that
fixes the origin, recall it induces an automorphism of $\mathcal{P}^m(\mathbb{R}^n)$
defined by $p\mapsto J^{m}(p\circ\phi)$, and denote this automorphism by $\phi^*$.  Then, $\phi^*(I)$ implies $\phi^*(p)$. In particular, the property of an ideal being
closed is invariant with respect
to $C^m$ coordinate changes around the origin.\end{lemma}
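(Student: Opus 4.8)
The plan is to transport the data certifying ``$I$ implies $p$'' through the diffeomorphism $\phi$ in the most naive way: if $F, S_1,\dots,S_L, Q_1,\dots,Q_L$ witness the implication on an annulus $\mathrm{Ann}_2(\rho)$, then $F\circ\phi$, $S_l\circ\phi$, and the jets $\phi^*(Q_l)$ should witness $\phi^*(I)$ implies $\phi^*(p)$ on a comparable annulus. First I would record the two elementary facts that make this work. (1) Since $\phi$ is a $C^m$-diffeomorphism fixing the origin, there are constants $0<c_0\le C_0$ with $c_0|\vec x|\le|\phi(\vec x)|\le C_0|\vec x|$ near the origin, and all derivatives of $\phi$ and $\phi^{-1}$ up to order $m$ are bounded near the origin; hence by the chain rule (Fa\`a di Bruno), composition with $\phi$ changes the bounds in \eqref{new_def_of_implied_label_1}–\eqref{new_def_of_implied_label_2} only by a multiplicative constant depending on $m,n,\phi$, and maps $\mathrm{Ann}_K(\rho)$ into $\mathrm{Ann}_{C_0 K}(\rho)$. (2) The automorphism $\phi^*$ sends $I$ to $\phi^*(I)$ and the allowed-direction set transforms compatibly; I would invoke Lemma \ref{relative_of_tangent_subset_bad_lemma}'s circle of ideas only implicitly — what I actually need is that $\phi$ maps the cone $\{\mathrm{dist}(\vec x/|\vec x|,\Omega)<\delta\}$ into a cone $\{\mathrm{dist}(\vec x/|\vec x|,\Omega')<\delta'\}$ where $\Omega'=Allow(\phi^*(I))$, with $\delta'\to0$ as $\delta\to0$, because the ``tangential'' linearization $D\phi(\vec 0)$ carries $\Omega$ into $\Omega'$ (this is where I'd want to be slightly careful; see below).

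The second step is the bookkeeping between jet products and pointwise products. The subtlety is that $\phi^*(p)=J^m(p\circ\phi)$ is only the Taylor polynomial of $p\circ\phi$, not $p\circ\phi$ itself; the error $p\circ\phi-\phi^*(p)$ is $O(|\vec x|^{m+1})$, hence on $\mathrm{Ann}_K(\rho)$ it is bounded by $\mathrm{const}\cdot\rho^{m+1}\le\mathrm{const}\cdot r\cdot\rho^m$, and likewise its derivatives of order $|\alpha|$ are $O(\rho^{m+1-|\alpha|})$. The same applies to each $Q_l\circ\phi-\phi^*(Q_l)$. So when I compose \eqref{new_def_of_implied_label_3} with $\phi$ I get
\begin{multline*}
\phi^*(p)(\vec y)=\bigl(p(\vec y)-\phi^*(p)(\phi^{-1}(\vec y))\bigr)\text{-type errors}
+ F(\phi^{-1}(\vec y)) \\ + \sum_{l=1}^L S_l(\phi^{-1}(\vec y))\,\phi^*(Q_l)(\vec y) + \sum_{l=1}^L S_l(\phi^{-1}(\vec y))\bigl(Q_l(\phi^{-1}(\vec y))-\phi^*(Q_l)(\vec y)\bigr),
\end{multline*}
and I absorb all the $O(\rho^{m+1-|\alpha|})$ terms (the Taylor remainders of $p\circ\phi$ and of the $Q_l\circ\phi$, the latter multiplied by the $S_l$ which only cost $\rho^{-|\alpha|}$, so the product is still $O(\rho^{1-|\alpha|}\rho^m)$) into a redefined flat function $\widetilde F$. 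For a given target $\epsilon$ in the conclusion, I first shrink $r$ so that these absorbed remainders are $\le\tfrac\epsilon2\rho^{m-|\alpha|}$, then apply the hypothesis with tolerance $\tfrac{\epsilon}{2C_1}$ (where $C_1$ is the composition constant from step 1) to control $F\circ\phi^{-1}$. The new witnesses for $\phi^*(I)$ implies $\phi^*(p)$ are $\widetilde A := C_1 A$, the jets $\phi^*(Q_l)\in\phi^*(I)$, the functions $\widetilde F$ and $\widetilde S_l := S_l\circ\phi^{-1}$; for the annulus/cone mismatch I would first invoke Lemma \ref{lemma_needed_for_invariance_wrt_coor_changes} with $K$ large enough that $\phi$ maps $\mathrm{Ann}_2(\rho)$ over $\mathrm{Ann}_K(\rho)$, so that the identity holds on the annulus required by Definition \ref{new_def_implied_jet} after composition.

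The main obstacle I expect is \textbf{the compatibility of the allowed-direction cones under $\phi$}: Definition \ref{new_def_implied_jet} only requires the algebraic identity on the $\delta$-dome around $\Omega=Allow(I)$, and I must produce the identity on the $\delta'$-dome around $\Omega'=Allow(\phi^*(I))$. Near the origin $\phi(\vec x)\approx D\phi(\vec 0)\vec x$, and one must check $D\phi(\vec 0)$ (projectivized to $S^{n-1}$) sends $\Omega$ onto $\Omega'$ — equivalently that $Allow$ transforms under the \emph{linear} part of $\phi$. This should follow from the definition of $Forb$/$Allow$ via the inequality $\sum|p_l(\vec x)|>c|\vec x|^m$ on a cone: composing with $\phi$ and using $\phi^*(p_l)=J^m(p_l\circ\phi)$ together with $p_l\circ\phi-\phi^*(p_l)=O(|\vec x|^{m+1})$ preserves such an inequality (with adjusted constants) on the image cone, and the image of a cone $\Gamma(\omega,\delta,r)$ under $\phi$ contains a cone $\Gamma(D\phi(\vec 0)\omega/|D\phi(\vec 0)\omega|,\delta',r')$ with $\delta'\to 0$ as $\delta\to 0$. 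I would isolate this as a short preliminary sublemma ($Forb(\phi^*(I))= $ image of $Forb(I)$ under the projectivized linear part of $\phi$, equivalently $Allow$ transforms the same way), prove it directly from Definition \ref{definition_allowed_and_forbidden_directions}, and then the rest of the argument is the routine chain-rule-and-Taylor-remainder bookkeeping sketched above. The final sentence of the lemma — invariance of closedness — is then immediate: $\phi^*$ is a ring automorphism, so $\phi^*(cl(I))\subseteq cl(\phi^*(I))$ by what we just proved, and applying the same to $\phi^{-1}$ gives the reverse inclusion, whence $I=cl(I)\iff\phi^*(I)=cl(\phi^*(I))$.
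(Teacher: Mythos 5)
Your proposal is correct and follows essentially the same route as the paper's own (very terse) proof: both rest on the facts that the image of a cone under $\phi$ contains a cone (so $Allow$ transforms by the projectivized Jacobian), that images of annuli centered at the origin are contained in such annuli, and on Lemma \ref{lemma_needed_for_invariance_wrt_coor_changes} to obtain the identity on a large enough annulus $\text{Ann}_K(\rho)$; you additionally supply the details the paper explicitly leaves to the reader, notably the absorption of the Taylor remainders $p\circ\phi-\phi^*(p)$ and $S_l\cdot\bigl(Q_l\circ\phi-\phi^*(Q_l)\bigr)$, which are $O(\rho^{m+1-\abs{\alpha}})$, into the flat term. (One small point to fix: since $\phi^*(p)=J^{m}(p\circ\phi)$, the transported witnesses should be $F\circ\phi$ and $S_l\circ\phi$ rather than $F\circ\phi^{-1}$ and $S_l\circ\phi^{-1}$.)
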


\begin{proof}The image of any cone under any
$C^m$ coordinate change around the origin contains a cone. Thus, the notion
of allowed directions of an ideal is invariant under $C^m$ coordinate changes
around
the origin, up to a linear coordinate change (that arises from the Jacobian
of the $C^m$ coordinate change). The image of any annulus centered at the origin under any
$C^m$ coordinate change around the origin is contained in an annulus centered
at the origin. These two facts together with Lemma \ref{lemma_needed_for_invariance_wrt_coor_changes} prove the lemma. We leave it to the reader to verify the details. \end{proof}

\begin{remark}Another
invariant of a given ideal $I$ with respect to $C^m$
coordinate changes is $\dim\text{span}_{\mathbb{R}}Allow(I)$ -- we will use
this fact often in \cite{FS2}.\end{remark}

\begin{corollary}[sets with no allowed directions]\label{cor_no_allowed_directions}The
only closed ideal $I\lhd\mathcal{P}_0^m(\mathbb{R}^n)$ such that $Allow(I)=\emptyset$
is $I=\mathcal{P}_0^m(\mathbb{R}^n)$. Moreover, if $E\subset\mathbb{R}^n$
is a closed subset containing the origin such that $Allow(I^m(E))=\emptyset$,
then the origin is an isolated
point of $E$ and $I^m(E)=\mathcal{P}_0^m(\mathbb{R}^n)$.\end{corollary}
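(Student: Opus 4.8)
The plan is to prove the two assertions of Corollary \ref{cor_no_allowed_directions} by exploiting Definition \ref{new_def_implied_jet} in the degenerate case $\Omega=Allow(I)=\emptyset$, where the defining condition (\ref{new_def_of_implied_label_3}) is \emph{vacuous}. First I would show that whenever $Allow(I)=\emptyset$, \emph{every} jet $p\in\mathcal{P}_0^m(\mathbb{R}^n)$ is implied by $I$. Indeed, take $L=1$, $Q_1$ any nonzero element of $I$, and $A=1$; given $\epsilon>0$ we simply take $\delta$ and $r$ to be anything positive (say $\delta=r=1$), and for $0<\rho\le r$ we set $F\equiv0$ and $S_1\equiv0$ on $\text{Ann}_4(\rho)$. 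Then (\ref{new_def_of_implied_label_1}) and (\ref{new_def_of_implied_label_2}) hold trivially, and (\ref{new_def_of_implied_label_3}) is the assertion that $p(\vec x)=0$ for all $\vec x\in\text{Ann}_2(\rho)$ with $\text{dist}(\frac{\vec x}{\abs{\vec x}},\emptyset)<\delta$; but $\text{dist}(\cdot,\emptyset)=+\infty$ by the conventions fixed in the Notation subsection, so there are no such $\vec x$ and the condition is void. Hence $p\in cl(I)$. This shows $cl(I)=\mathcal{P}_0^m(\mathbb{R}^n)$, so if in addition $I$ is closed then $I=\mathcal{P}_0^m(\mathbb{R}^n)$, proving the first sentence.

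For the second sentence, suppose $\vec 0\in E\subset\mathbb{R}^n$ is closed with $Allow(I^m(E))=\emptyset$. By Theorem \ref{main_theorem_on_necessary_condition} (which we are permitted to assume) the ideal $I^m(E)$ is closed, so the first part gives $I^m(E)=\mathcal{P}_0^m(\mathbb{R}^n)$. It remains to deduce that the origin is isolated in $E$. Here I would invoke Lemma \ref{relative_of_tangent_subset_bad_lemma}: applied with $\Omega=Allow(I^m(E))=\emptyset$, it yields some $\bar r>0$ with $E\cap B^\times(\bar r)=\emptyset$, which is exactly the statement that $\vec 0$ is an isolated point of $E$.

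Alternatively, to keep the argument self-contained and not lean on Theorem \ref{main_theorem_on_necessary_condition}, one can first establish isolation directly from Lemma \ref{relative_of_tangent_subset_bad_lemma} as above, obtaining $E\cap B^\times(\bar r)=\emptyset$, and then observe that $I^m(E)$ depends only on $E\cap B(\bar r)=\{\vec 0\}$ by simple observation (i) in Section \ref{simple_observations}; since every $p\in\mathcal{P}_0^m(\mathbb{R}^n)$ is the jet of some $C^m$ function vanishing at (hence ``on'') the singleton $\{\vec 0\}$, we get $I^m(\{\vec 0\})=\mathcal{P}_0^m(\mathbb{R}^n)$ and therefore $I^m(E)=\mathcal{P}_0^m(\mathbb{R}^n)$. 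Either route works; I would present the Lemma \ref{relative_of_tangent_subset_bad_lemma} route since it is shortest.

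The only real subtlety---and the step I would flag as the ``main obstacle,'' though it is more a matter of care than of difficulty---is making sure the vacuous-condition reading of (\ref{new_def_of_implied_label_3}) is airtight: one must point explicitly to the convention $\text{dist}(X,\emptyset)=+\infty$ and to the fact that the introduction's version of the definition had an explicit ``if $\Omega\neq\emptyset$'' guard (see (\ref{intro_label_8})), so that the empty-$\Omega$ case genuinely imposes no constraint on $F$ and the $S_l$. Once this is spelled out, the corollary follows immediately as above, and I would leave the routine verifications of the estimates (which are trivial for $F\equiv S_1\equiv0$) to the reader.
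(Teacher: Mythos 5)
Your proposal is correct and follows essentially the same route as the paper: the first assertion via the vacuousness of (\ref{new_def_of_implied_label_3}) when $\Omega=\emptyset$ (the paper simply takes $Q_1=0$, so no nonzero element of $I$ is needed), and the second via Lemma \ref{relative_of_tangent_subset_bad_lemma}, Observation (i) of \ref{simple_observations}, and the fact that $I^m(\{\vec 0\})=\mathcal{P}_0^m(\mathbb{R}^n)$. One caution: your first route for the second sentence, which invokes Theorem \ref{main_theorem_on_necessary_condition}, would be circular in the paper's organization, since the proof of that theorem disposes of the case $Allow(I^m(E))=\emptyset$ precisely by citing this corollary; only your self-contained alternative is admissible, and it is the one the paper uses.
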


\begin{proof}  If $I$ is closed and $Allow(I)=\emptyset$ then any jet
in $\mathcal{P}_0^m(\mathbb{R}^n)$
is implied by $I$ as (\ref{new_def_of_implied_label_3}) vacuously holds (to satisfy (\ref{new_def_of_implied_label_1}) and (\ref{new_def_of_implied_label_2}) one
may take $A=1, l=1, Q_1=0, S_1=0$ and $F=0$). This proves that the
only closed ideal $I\lhd\mathcal{P}_0^m(\mathbb{R}^n)$ such that $Allow(I)=\emptyset$
is $I=\mathcal{P}_0^m(\mathbb{R}^n)$. The "moreover" part  follows from Lemma \ref{relative_of_tangent_subset_bad_lemma}, Observation \ref{simple_observations}(i) and the fact that $I^m(\{\vec 0\})=\mathcal{P}_0^m(\mathbb{R}^n)$. \end{proof}

\begin{theorem}\label{main_theorem_on_necessary_condition}Fix $m,n\in\mathbb{N}$
and let $E\subset\mathbb{R}^n$ be a closed subset containing the origin.
Then, $I^m(E)\lhd\mathcal{P}_0^{m}(\mathbb{R}^n)$ is a closed ideal.\end{theorem}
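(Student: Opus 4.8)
The plan is to show that any jet $p$ implied by $I^m(E)$ already lies in $I^m(E)$; since $I^m(E)\subset cl(I^m(E))$ always holds, this gives $I^m(E)=cl(I^m(E))$. So fix $p\in cl(I^m(E))$, let $A>0$ and $Q_1,\dots,Q_L\in I^m(E)$ be the jets furnished by Definition \ref{new_def_implied_jet}, and choose $C^m$ functions $g_1,\dots,g_L$ on $\mathbb{R}^n$ vanishing on $E$ with $J^m(g_l)=Q_l$. Write $\Omega:=Allow(I^m(E))$. If $\Omega=\emptyset$, then by Corollary \ref{cor_no_allowed_directions} we have $I^m(E)=\mathcal{P}_0^m(\mathbb{R}^n)$ and there is nothing to prove, so assume $\Omega\neq\emptyset$.

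The core of the argument is a Whitney-style patching across dyadic scales. Apply Lemma \ref{lemma_needed_for_invariance_wrt_coor_changes} with, say, $K=8$: then for each $\epsilon>0$ (I will run this with a sequence $\epsilon_j\downarrow 0$) there are $\delta_j,r_j>0$ so that at every scale $\rho\le r_j$ there exist $F^{(\rho)},S_1^{(\rho)},\dots,S_L^{(\rho)}\in C^m(\mathbb{R}^n)$ with $|\partial^\alpha F^{(\rho)}|\le\epsilon_j\rho^{m-|\alpha|}$, $|\partial^\alpha S_l^{(\rho)}|\le\tilde A\rho^{-|\alpha|}$ on all of $\mathbb{R}^n$, and $p=F^{(\rho)}+\sum_l S_l^{(\rho)}Q_l$ on $\mathrm{Ann}_8(\rho)\cap\Gamma(\Omega,\delta_j,\infty)$. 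Pick dyadic scales $\rho_k=2^{-k}$; for $k$ large, $\rho_k$ falls in the appropriate range for the relevant $\epsilon_j$. Take a standard dyadic partition of unity $\{\chi_k\}$ with $\mathrm{supp}\,\chi_k\subset\mathrm{Ann}_4(\rho_k)$, $\sum_k\chi_k\equiv 1$ on a punctured ball about the origin, and $|\partial^\alpha\chi_k|\le C\rho_k^{-|\alpha|}$. Set
\begin{gather*}
S_l^\#:=\sum_k \chi_k\, S_l^{(\rho_k)},\qquad F^\#:=\sum_k \chi_k\, F^{(\rho_k)}
\end{gather*}
on a punctured neighborhood of the origin. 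The bound $|\partial^\alpha S_l^{(\rho_k)}|\le\tilde A\rho_k^{-|\alpha|}$ together with the derivative bounds on $\chi_k$ and bounded overlap of the supports give $|\partial^\alpha S_l^\#(\vec x)|=O(|\vec x|^{-|\alpha|})$, i.e.\ each $S_l^\#$ is $C^m$-tame on $\mathbb{R}^n\setminus\{\vec 0\}$; similarly, because the scale-$\rho_k$ bound on $F^{(\rho_k)}$ improves ($\epsilon_j\downarrow 0$) as $k\to\infty$, one gets $\partial^\alpha F^\#(\vec x)=o(|\vec x|^{m-|\alpha|})$, i.e.\ $F^\#$ extends to a $C^m$ function on $\mathbb{R}^n$ with $J^m(F^\#)=0$ (continuity of the top derivatives at $\vec 0$ also needs the little-$o$, which is exactly what the $\epsilon_j\to0$ gives). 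Now by Lemma \ref{relative_of_tangent_subset_bad_lemma}, after shrinking to a small punctured ball $B^\times(\bar r)$ we have $E\cap B^\times(\bar r)\subset\Gamma(\Omega,\delta_j,\bar r)$ for the $\delta_j$ we need; hence on $E\cap B^\times(\bar r)$ every point lies in some $\mathrm{Ann}_2(\rho_k)$ with direction in the dome, so the local identity $p=F^{(\rho_k)}+\sum_l S_l^{(\rho_k)}Q_l$ holds there for each $k$ contributing to $\chi_k$, and multiplying by $\chi_k$ and summing yields
\begin{gather*}
p(\vec x)=F^\#(\vec x)+\sum_{l=1}^L S_l^\#(\vec x)Q_l(\vec x)\qquad\text{on }E\cap B^\times(\bar r).
\end{gather*}

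To finish: define $\tilde F:=F^\#+\sum_l S_l^\#(g_l-Q_l)$ on $\mathbb{R}^n\setminus\{\vec 0\}$. Since $g_l-Q_l$ is $C^m$ with vanishing $m$-jet (it is the $C^m$ remainder of $g_l$), and $S_l^\#$ is $C^m$-tame, Remark \ref{intro_label_0} shows each $S_l^\#(g_l-Q_l)$ is $C^m$-flat on $\mathbb{R}^n\setminus\{\vec0\}$; together with $F^\#$ flat, $\tilde F$ extends to a $C^m$ function on $\mathbb{R}^n$ with $J^m(\tilde F)=0$. Then $G:=\tilde F+\sum_l S_l^\# g_l$; but wait — $S_l^\# g_l$ need not be $C^m$ at the origin since $S_l^\#$ is only tame. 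Instead form $G:=p(\vec x)-\big(F^\#(\vec x)+\sum_l S_l^\#(\vec x)Q_l(\vec x)\big)$ rewritten as follows: note $\sum_l S_l^\# g_l = \sum_l S_l^\# Q_l + \sum_l S_l^\#(g_l-Q_l)$, and $\sum_l S_l^\# g_l$ vanishes on $E$ (each $g_l$ does), while $\sum_l S_l^\#(g_l-Q_l)$ is $C^m$-flat. Hence on $E\cap B^\times(\bar r)$,
\begin{gather*}
p = F^\# + \sum_l S_l^\# Q_l = \underbrace{\Big(F^\# - \sum_l S_l^\#(g_l-Q_l)\Big)}_{=:\,R,\ C^m\text{-flat}} + \sum_l S_l^\# g_l,
\end{gather*}
so $p - R$ vanishes on $E$ near $\vec 0$. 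Using a cutoff supported near $\vec 0$, $R$ is the $m$-jet-zero part and $p-R$ can be realized (after multiplying by a cutoff $\equiv1$ near $\vec0$, which doesn't change vanishing on $E$ near $\vec0$ nor the $m$-jet) as a genuine $C^m$ function on $\mathbb{R}^n$ vanishing on $E$ with $m$-jet equal to $p$ (since $J^m(R)=0$). By Observation \ref{simple_observations}(i) this shows $p\in I^m(E)$.

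\textbf{Main obstacle.} The delicate point is the scale-by-scale bookkeeping that simultaneously makes $F^\#$ genuinely $C^m$ with vanishing $m$-jet (not merely bounded) — this forces running Definition \ref{new_def_implied_jet} along a sequence $\epsilon_j\downarrow 0$ and matching the scales $\rho_k$ to the windows $(0,r_j]$ so that the little-$o$ estimate survives the patching, including continuity of the top-order derivatives at the origin — together with the need to keep the overlap of the $\mathrm{Ann}$'s bounded and the $\delta_j$'s consistent with the single $\bar r$ from Lemma \ref{relative_of_tangent_subset_bad_lemma}. Handling the tame (not $C^m$) factors $S_l^\#$ near $\vec0$ correctly — which is why one never writes $S_l^\# g_l$ alone but always pairs it against the flat remainder via Remark \ref{intro_label_0} — is the second thing to get right.
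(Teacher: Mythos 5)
Your proposal is correct and follows essentially the same route as the paper: dyadic patching of the scale-by-scale data with $\epsilon_j\downarrow 0$ to produce a $C^m$-flat $F^\#$ and $C^m$-tame $S_l^\#$, localization of $E$ into the allowed cone via Lemma \ref{relative_of_tangent_subset_bad_lemma}, and substitution of the $C^m$ representatives $g_l$ of the $Q_l$ at the end. The one point to state more carefully is that Lemma \ref{relative_of_tangent_subset_bad_lemma} must be invoked once per $\delta_j$, yielding a decreasing sequence of radii $\bar r_j$ (not a single $\bar r$) that is then interleaved with the dyadic scales exactly as you interleave the $r_j$'s.
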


\begin{proof}Denote $\Omega=Allow(I^m(E))$. If $\Omega=\emptyset$ then the theorem follows from Corollary \ref{cor_no_allowed_directions}. So we assume $\Omega\neq\emptyset$. Let $p\in\mathcal{P}_0^{m}(\mathbb{R}^n)$ be implied by $I^m(E)$, and let $A>0$ and $Q_1,\dots,Q_L\in I^m(E)$ be such that the upshot of Definition \ref{new_def_implied_jet} holds. For $k\in\mathbb{N}$ set $\epsilon_k=2^{-k}$ and let $\delta_k,r_k$ correspond to $\epsilon_k$ as in Definition
\ref{new_def_implied_jet}. Given $0<\rho\leq r_k$ there exist functions $F,S_1,S_2,\dots,S_L\in C^m(\text{Ann}_4(\rho))$ such that
\begin{gather}\label{new_proof_main_theorem_label_1}\abs{\partial^\alpha
F(\vec x)}\leq \epsilon_{k} \rho^{m-\abs{\alpha}} \text { for all }\vec x\in\text{Ann}_4(\rho)
\text{ and all }\abs{\alpha}\leq m; \end{gather}
\begin{gather}\label{new_proof_main_theorem_label_2}\abs{\partial^\alpha S_l(\vec
x)}\leq A \rho^{-\abs{\alpha}} \text { for all }\vec x\in\text{Ann}_4(\rho)
\text{, all }\abs{\alpha}\leq m \text{ and all }1\leq l\leq L; \end{gather}
\begin{multline}\label{new_proof_main_theorem_label_3}p(\vec x)=F(\vec x)+S_1(\vec
x)Q_1(\vec x)+S_2(\vec
x)Q_2(\vec x)+\dots+S_L(\vec
x)Q_L(\vec x) \\ \text {for all }\vec x\in\text{Ann}_2(\rho)
\text{ such that }\text{dist}(\frac{\vec x}{\abs{\vec x}},\Omega)<\delta_{k}.
\end{multline}
By Lemma \ref{relative_of_tangent_subset_bad_lemma} there exists $\bar r_k>0$ such that
\begin{gather}\label{new_proof_main_theorem_label_4}E\cap B^\times(4 \bar r_k)\subset \{\vec x\in B^\times(4\bar r_{k}):\text{dist}(\frac{\vec
x}{\abs{\vec x}},\Omega)<\delta_{k}\}. \end{gather}
Replacing $\bar r_k$ by a smaller number preserves (\ref{new_proof_main_theorem_label_4}), so without loss of generality we may assume that for any $k\in \mathbb{N}$: $\bar r_k$ is a negative integer power of $2$; $\bar r_k<r_k$ and $\bar r_{k+1}\leq 2^{-10}\bar r_k$. In particular $\bar r_k\to 0$ as $k\to\infty$.

Set $\nu_{min}$ to be the unique integer such that $\bar r_1=2^{-\nu_{min}}$.
For $\mathbb{N}\ni \nu\geq\nu_{min}$ set $k(\nu)$ to be  the unique integer  such that
$\bar r_{k(\nu)+1}<2^{-\nu}\leq \bar r_{k(\nu)}$. Then, for any $\nu\geq\nu_{min}$
we in particular have $2^{-\nu}\leq\bar r_{k(\nu)}<r_{k(\nu)}$, so setting $\rho:=2^{-\nu}$ in (\ref{new_proof_main_theorem_label_1})-(\ref{new_proof_main_theorem_label_3}) above we obtain functions $F^\nu,S^\nu_1,S^\nu_2,\dots,S^\nu_L\in
C^m(\text{Ann}_4(2^{-\nu}))$ such that
\begin{gather}\label{new_proof_main_theorem_label_5}\abs{\partial^\alpha
F^{\nu}(\vec x)}\leq \epsilon_{k(\nu)} (2^{-\nu})^{m-\abs{\alpha}} \text { for all }\vec x\in\text{Ann}_4(2^{-\nu})
\text{ and all }\abs{\alpha}\leq m; \end{gather}
\begin{gather}\label{new_proof_main_theorem_label_6}\abs{\partial^\alpha
S^{\nu}_l(\vec
x)}\leq A (2^{-\nu})^{-\abs{\alpha}} \text{ for all }\vec x\in\text{Ann}_4(2^{-\nu})
\text{, all }\abs{\alpha}\leq m \text{ and all }1\leq l\leq L; \end{gather}
\begin{multline}\label{new_proof_main_theorem_label_7}p(\vec x)=F^{\nu}(\vec x)+S^{\nu}_1(\vec
x)Q_1(\vec x)+S^{\nu}_2(\vec
x)Q_2(\vec x)+\dots+S^{\nu}_L(\vec
x)Q_L(\vec x) \\ \text {for all }\vec x\in\text{Ann}_2(2^{-\nu})
\text{ such that }\text{dist}(\frac{\vec x}{\abs{\vec x}},\Omega)<\delta_{k(\nu)}.
\end{multline} Since $2\cdot 2^{-\nu}<4\bar r_{k(\nu)}$, we have that (\ref{new_proof_main_theorem_label_4}) and (\ref{new_proof_main_theorem_label_7}) imply that \begin{multline}\label{new_proof_main_theorem_label_8}p(\vec x)=F^{\nu}(\vec
x)+S^{\nu}_1(\vec
x)Q_1(\vec x)+S^{\nu}_2(\vec
x)Q_2(\vec x)+\dots+S^{\nu}_L(\vec
x)Q_L(\vec x) \\ \text {for all }\vec x\in E\cap \text{Ann}_2(2^{-\nu}).
\end{multline} Note that $k(\nu)\to\infty$ as $\nu\to\infty$, and so
\begin{gather}\label{new_proof_main_theorem_label_10}\epsilon_{k(\nu)}=2^{-k(\nu)}\to0\text{ as }\nu\to\infty. \end{gather} Let $\{\theta_\nu\in C^m(\mathbb{R}^n)\}_{\nu\geq\nu_{min}}$ be such that for some constant $C_1>0$ (that depends only on $m$ and $n$) we have
\begin{gather}\label{new_proof_main_theorem_label_11}0\leq\theta_\nu(\vec x)\leq 1\text{ for all }\vec x\in\mathbb{R}^n \text{ and }\text{supp}(\theta_\nu)\subset\text{Ann}_2(2^{-\nu})
\text{ for all }\nu\geq\nu_{min}; \end{gather}
\begin{gather}\label{new_proof_main_theorem_label_12}\abs{\partial^\alpha
\theta_\nu(\vec x)}\leq C_1(2^{-\nu})^{-\abs{\alpha}} \text
{ for
all }\vec x\in\mathbb{R}^n, 
\text{ all }\abs{\alpha}\leq m\text{ and all }\nu\geq\nu_{min}; \end{gather}
\begin{gather}\label{new_proof_main_theorem_label_13}\sum\limits_{\nu\geq\nu_{min}}\theta_\nu(\vec x)=1\text
{ for
all }0<\abs{\vec x}\leq 2^{-10}\cdot 2^{-\nu_{min}}.\end{gather}We now define $F,S_1,S_2,\dots,S_L\in C^m(\mathbb{R}^n\setminus\{\vec 0\})$ by $$F(\vec x):=\sum\limits_{\nu\geq\nu_{min}}\theta_\nu(\vec
x)F^\nu(\vec x)\text{ and }S_{l}(\vec
x):=\sum\limits_{\nu\geq\nu_{min}}\theta_\nu(\vec
x)S_{l}^\nu(\vec x).$$Fix an integer $\mu$ and suppose $2^{-(\mu+1)}<\abs{\vec x}\leq2^{-\mu}$. Then, $\vec x\in \text{supp}(\theta_\nu)=\text{Ann}_2(2^{-\nu})$ (recall (\ref{new_proof_main_theorem_label_11})) only for $\nu$ satisfying $\abs{\nu-\mu}\leq2$ since $2^{-(\mu+1)}<\abs{\vec
x}<2\cdot 2^{-\nu}$ and $\frac{1}{2}\cdot 2^{-\nu}<\abs{\vec
x}<2^{-\mu}$. Consequently, (\ref{new_proof_main_theorem_label_5}) and (\ref{new_proof_main_theorem_label_12}) yield that for  some constants $C_2,C_3>0$ (that depend only on $m$ and $n$) we have
\begin{multline}\label{new_proof_main_theorem_label_14}\abs{\partial^\alpha
F(\vec x)}\leq \sum\limits_{\abs{\nu-\mu}\leq2;\nu\geq\nu_0}C_2\epsilon_{k(\nu)} (2^{-\nu})^{m-\abs{\alpha}}\leq  C_{3}\big[\sum\limits_{\abs{\nu-\mu}\leq2;\nu\geq\nu_0}\epsilon_{k(\nu)}
\big](2^{-\mu})^{m-\abs{\alpha}} \\ \text
{ for all }2^{-(\mu+1)}<\abs{\vec x}\leq 2^{-\mu},\abs{\alpha}\leq m\text{ and all }\mu\in\mathbb{Z}.\end{multline}
For a constant $C_4>0$ (that depends only on $m$ and
$n$) we have also
\begin{gather}\label{new_proof_main_theorem_label_15}\abs{\partial^\alpha
S_l(\vec
x)}\leq C_{4}A (2^{-\nu})^{-\abs{\alpha}} \text
{ for }2^{-\nu-1}<\abs{\vec x}\leq 2^{-\nu},\text{ for all }\abs{\alpha}\leq
m\text{ and all }\nu\geq\nu_{min}.\end{gather}Recall that $\bar r_1=2^{-\nu_{min}}$, and (\ref{new_proof_main_theorem_label_10}), (\ref{new_proof_main_theorem_label_14}) and (\ref{new_proof_main_theorem_label_15}) imply that for some constant $C_5>0$ (that depends only on $m$ and
$n$) we have \begin{gather}\label{new_proof_main_theorem_label_16}\abs{\partial^\alpha
F(\vec
x)}=o(\abs{\vec x}^{m-\alpha})\text{ as }\abs{\vec x}\to0\text{ for all }\abs{\alpha}\leq
m,\end{gather}
\begin{gather}\label{new_proof_main_theorem_label_17}\abs{\partial^\alpha
S_l(\vec
x)}\cdot \abs{\vec x}^{\abs{\alpha}}<C_5 A \text
{ for all }\vec x\in B^\times(2^{-10}\cdot \bar r_1)\text{ and all }\abs{\alpha}\leq
m.\end{gather}Now, (\ref{new_proof_main_theorem_label_8}), (\ref{new_proof_main_theorem_label_11}) and (\ref{new_proof_main_theorem_label_13}) imply that \begin{multline}\label{new_proof_main_theorem_label_18}p(\vec
x)=F(\vec
x)+S_1(\vec
x)Q_1(\vec x)+S_2(\vec
x)Q_2(\vec x)+\dots+S_L(\vec
x)Q_L(\vec x) \\ \text {for all }\vec x\in E\cap B^\times(2^{-10}\cdot \bar r_1).\end{multline}Let $\chi\in C^\infty(\mathbb{R}^n)$ be such that $\chi$ identically equals 1 on $B^\times(2^{-11}\cdot \bar
r_1)$ and $\text{supp}(\chi)\subset(2^{-10}\cdot \bar
r_1)$. Then, (\ref{new_proof_main_theorem_label_18}) implies that
\begin{multline}\label{new_proof_main_theorem_label_19}p(\vec
x)=[(1-\chi(\vec x))\cdot p(\vec x)+\chi(\vec x)\cdot F(\vec x)]\\+\chi(\vec x)\cdot [S_1(\vec
x)Q_1(\vec x)+S_2(\vec
x)Q_2(\vec x)+\dots+S_L(\vec
x)Q_L(\vec x)] \\ \text {for all }\vec x\in E\setminus \{\vec 0\}.\end{multline}
We now define
$\tilde F\in C^m(\mathbb{R}^n)$ (thanks to (\ref{new_proof_main_theorem_label_16})) and $\tilde S_1,\tilde S_2,\dots,\tilde S_L\in C^m(\mathbb{R}^n\setminus\{\vec 0\})$ by $$\tilde F:=\begin{cases}
  (1-\chi(\vec x))\cdot p(\vec x)+\chi(\vec x)\cdot F(\vec x)  & \text{if } \vec x\neq\vec 0 \\
  0 & \text{if } \vec x=\vec 0
\end{cases}\text{ and }\tilde S_l(\vec x):=\chi(\vec
x)\cdot S_l(\vec
x),$$and thanks to (\ref{new_proof_main_theorem_label_19}), (\ref{new_proof_main_theorem_label_16}) and (\ref{new_proof_main_theorem_label_17}) (respectively) we get 
\begin{gather}\label{new_proof_main_theorem_label_20}p(\vec
x)=\tilde F(\vec x)+\tilde S_1(\vec
x)Q_1(\vec x)+\tilde S_2(\vec
x)Q_2(\vec x)+\dots+\tilde S_L(\vec
x)Q_L(\vec x) \text { for all }\vec x\in E\setminus \{\vec 0\};\end{gather}
\begin{gather}\label{new_proof_main_theorem_label_21}J^m(\tilde F)=0;\end{gather}
\begin{gather}\label{new_proof_main_theorem_label_22}\abs{\partial^\alpha \tilde S_l(\vec x)}=O(\abs{\vec x}^{-\alpha}) \text{ for all }\abs{\alpha}\leq m \text{ and all }1\leq l\leq L.\end{gather}We recall that for any $1\leq l\leq L$, $Q_l\in I^m(E)$, so there exists $F_l^\#\in C^m(\mathbb{R}^n)$ with $J^m(F_l^\#)=0$, such that $Q_l+F_l^\#=0$ on $E$. So now (\ref{new_proof_main_theorem_label_20}) implies that \begin{multline}\label{new_proof_main_theorem_label_23}p(\vec
x)=[\tilde F(\vec x)-\sum\limits_{l=1}^L\tilde S_l(\vec x)F_l^\#(\vec x)]+\sum\limits_{l=1}^L\tilde S_l(\vec x)[Q_l(\vec x)+F_l^\#(\vec x)] \\ =[\tilde F(\vec x)-\sum\limits_{l=1}^L\tilde S_l(\vec x)F_l^\#(\vec x)] \text { for all }\vec x\in E\setminus \{\vec 0\}.\end{multline}Finally, we define $$F^\#(\vec x):=\begin{cases}
  \sum\limits_{l=1}^L\tilde
S_l(\vec x)F_l^\#(\vec x)-\tilde F(\vec x)  & \text{if
} \vec x\neq\vec 0 \\
  0 & \text{if } \vec x=\vec 0
\end{cases}.$$ 
Since $\tilde F,F_1^\#,F_2^\#,\dots F_L^\#\in C^m(\mathbb{R}^n)$ and $J^m(\tilde F),J^m((F_1^\#),J^m(F_2^\#),\dots J^m(F_L^\#)=0$, we get from (\ref{new_proof_main_theorem_label_22}) that $F^\#\in C^m(\mathbb{R}^n)$ and $J^m(F^\#)=0$. Moreover, from (\ref{new_proof_main_theorem_label_23}) we get that $p+F^\#$ vanishes identically on $E$. We conclude that $p\in I^m(E)$, i.e., $I^m(E)$ is indeed closed.\end{proof}

\begin{question}[being closed is sufficient]\label{main_question}Fix $m,n\in\mathbb{N}$.
Is it true that for any closed ideal $I\lhd\mathcal{P}_0^{m}(\mathbb{R}^n)$ there
exists a closed subset $E\subset\mathbb{R}^n$ containing the origin  such
that $I=I^m(E)$?\end{question}

\begin{question}[semi-algebraic sets suffice]\label{semi_algebraic_question}Fix
$m,n\in\mathbb{N}$ and let $E\subset\mathbb{R}^n$ be a
closed subset containing the origin. Is it true that there always exists
a semi-algebraic subset $E'\subset\mathbb{R}^n$ containing the origin (see Footnote \ref{semialgebraic_footnote}) such
that $I^m(E)=I^m(E')$?\end{question}

\section{Calculating implied jets}\label{section_calculation_method}
\subsection{Calculating the allowed directions of an ideal}

\begin{definition}[lowest degree homogenous part]Let $p\in\mathcal{P}_0^{m}(\mathbb{R}^n)$
be a non-zero jet. We may always uniquely write $p=p_k+q$,
with $p_k$ a homogenous polynomial
of degree $k$, where $k$ is the order of vanishing
of $p$ (as $p$ is not the zero jet we have $1\leq k\leq m$ -- see Definition
\ref{order-of-vanishing-def}), and $q$ is a (possibly zero) polynomial of
order of vanishing more than 
$k$. We call $p_k$ \textit{the lowest degree homogenous part of} $p$.\end{definition}

The proof of the following lemma is straightforward, and so left to the reader.

\begin{lemma}\label{equiv_conditions_for_forbidden_directions_lemma}
Let $p\in\mathcal{P}_0^m(\mathbb{R}^n)$ be a non-zero jet and let $\omega\in
S^{n-1}$ be a direction. Let $p_k$ be the lowest degree
homogenous part of $p$. Then, the following are equivalent:
\begin{gather}\label{equiv_cond_two_for_forbidden}
  \text{ There exist a cone }\Gamma(\omega,\delta,r)\text{ and }c>0\text{
such that }\abs{p(\vec x)}> c\cdot\abs{\vec
{x}}^k\text { for all }\vec x\in\Gamma(\omega,\delta,r);
\end{gather}
\begin{gather}\label{equiv_cond_three_for_forbidden}
  p_k(\omega)\neq 0;
\end{gather} \end{lemma}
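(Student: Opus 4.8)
The plan is to prove the two implications separately, treating $p = p_k + q$ where $p_k$ is the lowest degree homogeneous part and $q$ has order of vanishing at least $k+1$; the only analytic input needed is that such a $q$ satisfies $q(\vec x) = O(\abs{\vec x}^{k+1})$ for $\abs{\vec x}\leq 1$, hence $q(\vec x) = o(\abs{\vec x}^{k})$ as $\vec x\to\vec 0$.

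For the direction (\ref{equiv_cond_three_for_forbidden}) $\Rightarrow$ (\ref{equiv_cond_two_for_forbidden}), I would start from $p_k(\omega)\neq 0$. Since $p_k$ is continuous on $S^{n-1}$, there is $\delta>0$ and $c_0>0$ with $\abs{p_k(\eta)}\geq 2c_0$ for all $\eta\in D(\omega,\delta)$; by homogeneity of degree $k$ this gives $\abs{p_k(\vec x)}\geq 2c_0\abs{\vec x}^k$ whenever $\vec x/\abs{\vec x}\in D(\omega,\delta)$. Using $q(\vec x)=o(\abs{\vec x}^k)$, pick $r>0$ so small that $\abs{q(\vec x)}\leq c_0\abs{\vec x}^k$ for $0<\abs{\vec x}<r$. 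Then for $\vec x\in\Gamma(\omega,\delta,r)$ the triangle inequality yields $\abs{p(\vec x)}\geq\abs{p_k(\vec x)}-\abs{q(\vec x)}\geq c_0\abs{\vec x}^k$, which is exactly (\ref{equiv_cond_two_for_forbidden}) with $c=c_0$.

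For the contrapositive of (\ref{equiv_cond_two_for_forbidden}) $\Rightarrow$ (\ref{equiv_cond_three_for_forbidden}), I would assume $p_k(\omega)=0$ and restrict attention to the ray $\vec x = t\omega$, $t\in(0,r)$, which lies in $\Gamma(\omega,\delta,r)$ for every $\delta>0$. Here $p(t\omega)=t^k p_k(\omega)+q(t\omega)=q(t\omega)=o(t^k)$ as $t\to 0^+$, so $\abs{p(t\omega)}/\abs{t\omega}^k\to 0$. Hence no $c>0$ (and no $\delta,r>0$) can satisfy the lower bound in (\ref{equiv_cond_two_for_forbidden}) on all of $\Gamma(\omega,\delta,r)$, so (\ref{equiv_cond_two_for_forbidden}) fails.

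There is no real obstacle here: the argument is elementary, and the only point requiring a word of care is the uniform estimate $q(\vec x)=o(\abs{\vec x}^k)$, which follows since every monomial appearing in $q$ has total degree $>k$. One should also note that the equivalence of (\ref{equiv_cond_two_for_forbidden}) with the defining condition (\ref{equiv_cond_one_for_forbidden}) for a single jet being forbidden at $\omega$ is immediate, since for $\abs{\vec x}<1$ one has $\abs{\vec x}^m\leq\abs{\vec x}^k$, while conversely a lower bound $\abs{p(\vec x)}>c\abs{\vec x}^m$ on a cone forces $p_k(\omega)\neq 0$ by the same ray argument (as $\abs{p(t\omega)}=o(t^k)$ would contradict $\abs{p(t\omega)}>ct^m$ only after noting $k\le m$, so one instead runs the ray argument directly). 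This makes Lemma \ref{equiv_conditions_for_forbidden_directions_lemma} the practical tool for computing $Allow$ of a principal ideal, and by Definition \ref{definition_allowed_and_forbidden_directions} it extends to finitely generated ideals.
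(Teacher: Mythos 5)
The paper gives no proof of this lemma (it is explicitly ``left to the reader''), and your argument is exactly the natural one the authors intend: write $p=p_k+q$ with $q$ of order of vanishing $>k$, so $q(\vec x)=o(\abs{\vec x}^k)$; get (\ref{equiv_cond_two_for_forbidden}) from (\ref{equiv_cond_three_for_forbidden}) by continuity and $k$-homogeneity of $p_k$ on a small dome plus the triangle inequality; and refute (\ref{equiv_cond_two_for_forbidden}) when $p_k(\omega)=0$ by evaluating along the ray $t\omega$, where $p(t\omega)=q(t\omega)=o(t^k)$. Both implications are correct as written.

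One caution about your closing paragraph, which goes beyond the lemma: the claim that a lower bound $\abs{p(\vec x)}>c\abs{\vec x}^m$ on a cone (i.e., condition (\ref{equiv_cond_one_for_forbidden}) for the single jet $p$) conversely forces $p_k(\omega)\neq 0$ is false. Take $n=2$, $m=4$, $p=x^2+y^4$ and $\omega=(0,1)$: on any cone $\Gamma(\omega,\delta,r)$ with $\delta$ small one has $\abs{x}\leq C\delta\abs{y}$, hence $x^2+y^4\geq y^4\geq c\,\abs{\vec x}^4$ there, so (\ref{equiv_cond_one_for_forbidden}) holds with exponent $m=4$, yet the lowest degree homogeneous part $p_2=x^2$ vanishes at $\omega$. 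So only the implication (\ref{equiv_cond_two_for_forbidden})$\Rightarrow$(\ref{equiv_cond_one_for_forbidden}) (via $\abs{\vec x}^m\leq\abs{\vec x}^k$ for $\abs{\vec x}\leq 1$) is valid, which is precisely why Corollary \ref{old_new_cor_on_how_to_calc_allow_with_inclusion} asserts only an inclusion $Allow(I)\subset\bigcap_i\{\dots\}$ in general, with equality reserved for homogeneous generators (where $q=0$ and the ray argument does close the loop). Your hedged parenthetical correctly senses that $o(t^k)$ does not contradict $>ct^m$, but the conclusion you draw from ``running the ray argument directly'' is not available. None of this affects the proof of the lemma itself, which is complete.
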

\begin{corollary}\label{old_new_cor_on_how_to_calc_allow_with_inclusion}Let
$I=\langle
p_1,p_2,\dots p_t\rangle_m\lhd\mathcal{P}^{m}(\mathbb{R}^n)$ be an ideal. Then,
$$Allow(I)\subset\bigcap\limits_{i=1}^t\{\text{the zero set in } S^{n-1}\text{
of the lowest degree homogenous part of }p_i\}.$$Moreover, if $
p_1,p_2,\dots p_t$ are all homogenous, then $$Allow(I)=\bigcap\limits_{i=1}^t\{\text{the
zero set in } S^{n-1}\text{
of }p_i\}.$$\end{corollary}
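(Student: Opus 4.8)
\textbf{Proof plan for Corollary \ref{old_new_cor_on_how_to_calc_allow_with_inclusion}.}

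The plan is to reduce everything to Lemma \ref{equiv_conditions_for_forbidden_directions_lemma}, which characterizes the forbidden directions of a single jet via the non-vanishing of its lowest degree homogeneous part. First I would establish the inclusion. Let $\omega\in S^{n-1}$ and suppose $\omega$ lies outside the intersection on the right-hand side; that is, there is some $i$ with $(p_i)_{k_i}(\omega)\neq0$, where $(p_i)_{k_i}$ is the lowest degree homogeneous part of $p_i$. By the equivalence $(\ref{equiv_cond_two_for_forbidden})\Leftrightarrow(\ref{equiv_cond_three_for_forbidden})$ in Lemma \ref{equiv_conditions_for_forbidden_directions_lemma}, there exist a cone $\Gamma(\omega,\delta,r)$ and $c>0$ with $\abs{p_i(\vec x)}>c\abs{\vec x}^{k_i}$ on that cone; since $k_i\leq m$ and we may shrink $r$ so that $\abs{\vec x}\leq1$ on the cone, we get $\abs{p_i(\vec x)}>c\abs{\vec x}^m$ there. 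Taking $L=1$ and $Q_1=p_i\in I$ in Definition \ref{definition_allowed_and_forbidden_directions}, this exhibits $\omega$ as a forbidden direction of $I$, i.e. $\omega\notin Allow(I)$. Contrapositively, $Allow(I)$ is contained in the asserted intersection.

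For the "moreover" part, assume all the generators $p_i$ are homogeneous, so each $p_i$ equals its own lowest degree homogeneous part. The inclusion $\subset$ is the statement just proved (the zero set of $p_i$ in $S^{n-1}$ now being literally the zero set of its lowest degree part). For the reverse inclusion $\supset$, suppose $\omega\in S^{n-1}$ satisfies $p_i(\omega)=0$ for all $i=1,\dots,t$; I must show $\omega\in Allow(I)$, i.e. that $\omega$ is not forbidden. Suppose for contradiction it were forbidden: then there are $Q_1,\dots,Q_L\in I$, a cone $\Gamma(\omega,\delta,r)$ and $c>0$ with $\sum_j\abs{Q_j(\vec x)}>c\abs{\vec x}^m$ on the cone. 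Each $Q_j$ lies in $I=\langle p_1,\dots,p_t\rangle_m$, so $Q_j=\sum_i J^m(a_{ij}p_i)$ for jets $a_{ij}\in\mathcal{P}^m(\mathbb{R}^n)$; writing $a_{ij}=c_{ij}+(\text{higher order terms})$ with $c_{ij}\in\mathbb{R}$ the constant term, and using that $p_i$ is homogeneous of degree $k_i$, the contribution of $Q_j$ near the origin in the direction $\omega$ is $O(\abs{\vec x}^{\min_i k_i})$ with the leading-order directional behavior governed by $\sum_i c_{ij}p_i$. The key point is that $Q_j(\vec x)=o(\abs{\vec x}^{d_j})$ as $\vec x\to0$ along the ray through $\omega$ whenever the degree-$d_j$ homogeneous part of $Q_j$ vanishes at $\omega$, where $d_j$ is the order of vanishing of $Q_j$; but the lowest degree homogeneous part of $Q_j$ is a homogeneous combination of the $p_i$'s (namely $\sum_{i: k_i = d_j} c_{ij} p_i$), hence vanishes at $\omega$ since every $p_i$ does. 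More carefully: I claim each $Q_j$ satisfies $\abs{Q_j(\vec x)}\leq C_j\abs{\vec x}^{m}$ is false in general, so instead I should argue that $\abs{Q_j(\vec x)}=o(\abs{\vec x}^{d_j})$ on a narrow enough cone about $\omega$ — but this requires $d_j \geq m$ to reach a contradiction, which need not hold. The cleaner route: each $Q_j\in I$, so apply the already-proved inclusion $Allow(I)\subset\bigcap_i\{p_i=0\text{ in }S^{n-1}\}$ to the sub-ideal $\langle Q_j\rangle_m\subset I$? That also does not immediately work. Instead I would argue directly: since $p_i(\omega)=0$ for all $i$ and $Q_j = \sum_i J^m(a_{ij} p_i)$, expanding $a_{ij}(\vec x) = \sum_{|\beta|\le m} c_{ij,\beta}\vec x^\beta$ gives $Q_j(\vec x) = \sum_i \sum_\beta c_{ij,\beta} \vec x^\beta p_i(\vec x)$ modulo degree $>m$ terms; evaluating along $\vec x = t\omega$, each term is $t^{|\beta|+k_i} c_{ij,\beta}\omega^\beta p_i(\omega) = 0$. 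Hence $Q_j(t\omega)=0$ for all small $t>0$, so $Q_j$ vanishes on the ray through $\omega$. By continuity of the $Q_j$ and homogeneity considerations, $\sum_j\abs{Q_j(\vec x)}/\abs{\vec x}^m\to0$ as $\vec x\to0$ within a sufficiently narrow cone about $\omega$ — this needs a short argument since $\abs{\vec x}^m$ is the lowest possible order — but in fact it suffices to note that $\sum_j \abs{Q_j(t\omega)} = 0 < c t^m$, contradicting the forbidden-direction inequality on the ray $\{t\omega : 0<t<r\}\subset\Gamma(\omega,\delta,r)$. Thus $\omega$ is not forbidden, completing the reverse inclusion.

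The main obstacle is the reverse inclusion in the homogeneous case: I must rule out that a clever combination $Q_1,\dots,Q_L\in I$ blows up in the $\omega$ direction even though each generator $p_i$ vanishes at $\omega$. The resolution above is clean because evaluating any element of $I$ along the \emph{ray} $\{t\omega\}$ gives identically zero (each monomial multiple of a $p_i$ vanishes there), and the ray is contained in every cone $\Gamma(\omega,\delta,r)$, which flatly contradicts the defining inequality $\sum_j\abs{Q_j(\vec x)}>c\abs{\vec x}^m$ of a forbidden direction. So no genuine delicate estimate is needed — one only needs the algebraic observation that $I$ consists of jet-multiples of the $p_i$, plus the fact that these multiples vanish along the ray. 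I would present the inclusion first (one line from Lemma \ref{equiv_conditions_for_forbidden_directions_lemma}), then the ray argument for the homogeneous case, and leave the bookkeeping of constants to the reader as is consistent with the paper's style.
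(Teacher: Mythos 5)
Your proposal is correct and follows essentially the same route as the paper: the inclusion is exactly the paper's one-line appeal to Lemma \ref{equiv_conditions_for_forbidden_directions_lemma} (that (\ref{equiv_cond_two_for_forbidden}) implies (\ref{equiv_cond_one_for_forbidden}) after shrinking $r$ so that $\abs{\vec x}\leq 1$), and the ``moreover'' part, which the paper leaves to the reader, is settled by your ray argument, which is the intended one. Despite the exploratory detours in the middle, the final observation — that every $Q_j\in\langle p_1,\dots,p_t\rangle_m$ is a sum of truncated products $J^m(a_{ij}p_i)$ whose homogeneous components all vanish at $\omega$, so $Q_j(t\omega)\equiv 0$ on the ray $\{t\omega\}\subset\Gamma(\omega,\delta,r)$, contradicting $\sum_j\abs{Q_j(\vec x)}>c\abs{\vec x}^m$ — is complete and correct.
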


\begin{proof}The first part follows immediately from Lemma \ref{equiv_conditions_for_forbidden_directions_lemma}
and the fact that (\ref{equiv_cond_two_for_forbidden}) obviously implies
(\ref{equiv_cond_one_for_forbidden}). It is left to the reader to verify the "moreover" part as it now follows easily from Definition \ref{definition_allowed_and_forbidden_directions}. \end{proof}

\subsection{Negligible functions}

\begin{definition}[negligible functions]\label{def_negligible_functions}Let $U\subset\mathbb{R}^n$ be open and let $\Omega\subset S^{n-1}$. A function $F\in C^m(U)$
is \textit{(m--)negligible for $\Omega$ }if for all $\epsilon>0$ there
exist  $\delta>0$ and $r>0$ such that the following hold:
\begin{gather}\label{negligible_fun_condition_0}\Gamma(\Omega,\delta,r)\subset U;\end{gather}
\begin{gather}\label{negligible_fun_condition_a}\abs{\partial^\alpha
F(\vec x)}\leq\epsilon\abs{\vec x}^{m-\abs{\alpha}}\text { for all
}\vec x\in\Gamma(\Omega,\delta,r)\text{ and all }\abs{\alpha}\leq
m;\end{gather}
\begin{multline}\label{negligible_fun_condition_b}\abs{\partial^\alpha F(\vec x)-\sum\limits_{\abs{\gamma}\leq m-\abs{\alpha}}\frac{1}{\gamma!}\partial^{\alpha+\gamma}
F(\vec y)\cdot(\vec x-\vec y)^{\gamma}}\leq \epsilon\abs{\vec x-\vec
y}^{m-\abs{\alpha}} \\ \text { for all }\vec x,\vec y\in\Gamma(\Omega,\delta,r)
\text{ distinct, and all }\abs{\alpha}\leq m.\end{multline}
\end{definition}

\begin{example}[follows easily from Taylor's Theorem]\label{exmaple_easy_eaxample_of_negligible_functions}If there exists $r>0$ such that $B(r)\subset U$ and
$J^m(F)=0$, then $F$ is negligible for $\Omega$, for any $\Omega
\subset S^{n-1}$. \end{example}

\begin{definition}[Whitney-negligible functions]\label{def_whitney_negligible_functions}Let
$U\subset\mathbb{R}^n$ be open and let $\Omega\subset S^{n-1}$.
A function $F\in C^m(U)$
is \textit{Whitney--(m--)negligible for $\Omega$ }if for all $\epsilon>0$ there
exist  $\delta>0$, $r>0$ and $F_\epsilon\in C^m(\mathbb{R}^n\setminus\{\vec0\})$ such that the following hold:
\begin{gather}\label{whitney_negligible_fun_condition_0}\Gamma(\Omega,\delta,r)\subset
U;\end{gather}
\begin{gather}\label{whitney_negligible_fun_condition_1}\abs{\partial^\alpha
F_\epsilon(\vec x)}\leq\epsilon\abs{\vec x}^{m-\abs{\alpha}}\text { for all
}\vec x\in\mathbb{R}^n\setminus\{\vec0\}\text{ and all }\abs{\alpha}\leq
m;\end{gather}
\begin{gather}\label{whitney_negligible_fun_condition_2}F_\epsilon(\vec x)=F(\vec x)\text { for all
}\vec x\in\Gamma(\Omega,\delta,r).\end{gather}
\end{definition}

We will show that a function is negligible if and only if it is Whitney--negligible (Lemma \ref{lemma_euqiv_def_of_neg_fun}). Before we do that we state and prove a preliminary lemma that follows from Whitney's Extension Theorem:

\begin{lemma}\label{new_cor_from_whitney}Fix $m,n\in\mathbb{N}$. There exists a constant $C(m,n)>0$ depending only on $m$ and $n$ such that the following holds: 

Let $\emptyset\neq\Omega\subset S^{n-1}$, $\delta>0$, and $F\in C^m(U_\delta)$, where $U_\delta=\{\vec x\in\mathbb{R}^n:\frac{1}{10}<\abs{\vec x}<1,\text{dist}(\frac{\vec x}{\abs{\vec x}},\Omega)<\delta\}$. Also set $\tilde U_\delta=\{\vec
x\in\mathbb{R}^n:\frac{1}{2}\leq\abs{\vec x}\leq\frac{2}{3},\text{dist}(\frac{\vec
x}{\abs{\vec
x}},\Omega)\leq\frac{\delta}{2}\}$. Let $M>0$ be such that

\begin{gather}\label{whitney_ext_thm_label_1}\abs{\partial^\alpha
F(\vec x)}\leq M\text { for all
}\vec x\in U_\delta\text{ and all }\abs{\alpha}\leq
m;\end{gather}
\begin{multline}\label{whitney_ext_thm_label_2}\abs{\partial^\alpha F(\vec
x)-\sum\limits_{\abs{\gamma}\leq m-\abs{\alpha}}\frac{1}{\gamma!}\partial^{\alpha+\gamma}
F(\vec y)\cdot(\vec x-\vec y)^{\gamma}}\leq M\abs{\vec x-\vec
y}^{m-\abs{\alpha}} \\ \text { for all }\vec x,\vec y\in U_\delta
\text{ distinct, and all }\abs{\alpha}\leq m.\end{multline}
Then, there exists $\tilde F\in C^m(\mathbb{R}^n)$ such that
\begin{gather}\label{whitney_ext_thm_label_3}\abs{\partial^\alpha\tilde
F(\vec x)}\leq C(m,n)\cdot M\text { for all
}\vec x\in \mathbb{R}^n\text{ and all }\abs{\alpha}\leq
m;\end{gather}
\begin{gather}\label{whitney_ext_thm_label_4}\tilde F(\vec x)=F(\vec
x)\text { for all
}\vec x\in \tilde U_\delta.\end{gather}
\end{lemma}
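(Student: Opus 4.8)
The plan is to apply Whitney's Extension Theorem directly to the jet field on $\tilde U_\delta$ obtained by truncating the Taylor expansions of $F$. Concretely, for each $\vec x\in\tilde U_\delta$ define the polynomial $P_{\vec x}(\vec y):=\sum_{|\gamma|\le m}\frac{1}{\gamma!}\partial^\gamma F(\vec x)(\vec y-\vec x)^\gamma$; this is legitimate since $\tilde U_\delta\subset U_\delta$ (using $\delta/2<\delta$ and $[\tfrac12,\tfrac23]\subset(\tfrac1{10},1)$), so $F$ and all its derivatives up to order $m$ are defined and satisfy the bounds \eqref{whitney_ext_thm_label_1}--\eqref{whitney_ext_thm_label_2} there. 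The hypotheses \eqref{whitney_ext_thm_label_1} and \eqref{whitney_ext_thm_label_2} say precisely that this family $(P_{\vec x})_{\vec x\in\tilde U_\delta}$ is a Whitney field of class $C^m$ on $\tilde U_\delta$ with Whitney norm $O(M)$ — the zeroth-order consistency is \eqref{whitney_ext_thm_label_2} with $\alpha=0$, and the higher-order consistency conditions are \eqref{whitney_ext_thm_label_2} for general $\alpha$, after noting $\partial^\alpha_{\vec y}P_{\vec x}(\vec y)\big|_{\vec y=\vec x'}=\sum_{|\beta|\le m-|\alpha|}\frac{1}{\beta!}\partial^{\alpha+\beta}F(\vec x)(\vec x'-\vec x)^\beta$.

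The one genuine subtlety is that Whitney's Extension Theorem in its standard form requires the domain to be \emph{compact} (or at least closed), whereas $U_\delta$ is open; this is exactly why the statement introduces the smaller, nested region $\tilde U_\delta$, which is compact (it is cut out by non-strict inequalities) and is contained in the open set $U_\delta$ with a definite "collar" of room around it, both in the radial variable ($[\tfrac12,\tfrac23]$ sits strictly inside $(\tfrac1{10},1)$) and in the angular variable ($\mathrm{dist}\le\delta/2$ sits strictly inside $\mathrm{dist}<\delta$). So I would first restrict the Whitney field to the compact set $\tilde U_\delta$, apply the classical Whitney Extension Theorem (see, e.g., Whitney's original paper or Malgrange/Stein) to obtain $\hat F\in C^m(\mathbb{R}^n)$ with $\|\hat F\|_{C^m(\mathbb{R}^n)}\le C(m,n)M$ and $\partial^\alpha\hat F(\vec x)=\partial^\alpha F(\vec x)$ for all $\vec x\in\tilde U_\delta$, $|\alpha|\le m$. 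In particular $\hat F=F$ on $\tilde U_\delta$, giving \eqref{whitney_ext_thm_label_4}, and \eqref{whitney_ext_thm_label_3} is the norm bound from the extension theorem.

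The only point requiring a word of care is the \emph{uniformity} of the constant $C(m,n)$: it must not depend on $\Omega$ or on $\delta$. This is automatic because the Whitney extension constant depends only on $m$, $n$, and (at worst) geometric features of the domain such as its diameter and a Whitney-type "quasiconvexity" or "cone" constant; here every $\tilde U_\delta$ lies in the fixed annulus $\{\tfrac12\le|\vec x|\le\tfrac23\}$, hence has diameter bounded by $\tfrac43$ independent of $\Omega,\delta$, and any two points of $\tilde U_\delta$ can be joined inside it — or inside a fixed slightly larger annular region on which the extension is built — by a path of length comparable to their Euclidean distance with an absolute comparison constant. Thus the extension constant can be taken to depend only on $m$ and $n$. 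I would phrase the proof so as to invoke a version of Whitney's theorem already stated with an explicit dependence of the constant on $m$, $n$, and the diameter of the domain, and then simply remark that the diameter here is at most $2$, so the constant is absorbed into $C(m,n)$; alternatively one rescales $\tilde U_\delta$ into the unit cube to reduce to a normalized Whitney extension statement. The main (and essentially only) obstacle is verifying this diameter/shape-uniformity of the Whitney constant, which is routine once one records that $\tilde U_\delta$ always sits inside a fixed compact annulus.
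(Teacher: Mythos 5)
Your overall strategy is the same as the paper's: restrict the jet field $\{J_{\vec x}F\}$ to the compact set $\tilde U_\delta$, invoke the classical Whitney Extension Theorem, and observe that the resulting constant depends only on $m$ and $n$. Your discussion of why $\tilde U_\delta$ (rather than $U_\delta$) is the right set, and of why the extension constant is uniform in $\Omega$ and $\delta$, is correct and matches what the paper implicitly uses.

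However, there is a genuine gap at the key step. You assert that \eqref{whitney_ext_thm_label_1} and \eqref{whitney_ext_thm_label_2} ``say precisely'' that $(P_{\vec x})_{\vec x\in\tilde U_\delta}$ is a Whitney field of class $C^m$. They do not: the $C^m$ Whitney Extension Theorem (Whitney's original theorem, or \cite[Theorem 2.3]{FI}) requires, in addition to the uniform bounds
\begin{gather*}
\Bigl|\partial^\alpha F(\vec x)-\sum_{\abs{\gamma}\leq m-\abs{\alpha}}\tfrac{1}{\gamma!}\partial^{\alpha+\gamma}F(\vec y)(\vec x-\vec y)^{\gamma}\Bigr|\leq M\abs{\vec x-\vec y}^{m-\abs{\alpha}},
\end{gather*}
the asymptotic Whitney condition that these remainders are $o(\abs{\vec x-\vec y}^{m-\abs{\alpha}})$ as $\abs{\vec x-\vec y}\to 0$ on the compact set. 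The $O(M)$ bounds alone do not imply the existence of any $C^m$ extension (for $\abs{\alpha}=m$ they only give boundedness, not continuity, of the top-order data), so the theorem you invoke simply does not apply without the little-$o$ condition. This condition does hold here, but it must be derived from the fact that $F$ is genuinely $C^m$ on the \emph{open} set $U_\delta$ containing the compact set $\tilde U_\delta$; the paper does this by a compactness argument (a violating sequence of pairs $\vec x_\nu,\vec y_\nu$ would accumulate at a point $\vec z\in\tilde U_\delta$, a closed ball about $\vec z$ inside $U_\delta$ eventually contains both points, and Taylor's theorem with the modulus of continuity of the $m$-th derivatives on that ball yields the little-$o$ estimate, contradicting the violation). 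Note that this is not entirely routine, since $\tilde U_\delta$ need not be convex, so one cannot just apply Taylor's theorem along segments. This verification is the bulk of the paper's proof and is entirely missing from your proposal.
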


\begin{proof}We start by checking that the following holds:\begin{multline}\label{whitney_ext_thm_label_5}\abs{\partial^\alpha F(\vec
x)-\sum\limits_{\abs{\gamma}\leq m-\abs{\alpha}}\frac{1}{\gamma!}\partial^{\alpha+\gamma}
F(\vec y)\cdot(\vec x-\vec y)^{\gamma}}=o(\abs{\vec x-\vec y}^{m-\abs{\alpha}}) \\ \text { as }\abs{\vec x-\vec y}\to 0,\text{ subject to }\vec x,\vec y\in \tilde U_\delta
\text{ distinct, and all }\abs{\alpha}\leq m.\end{multline}Indeed, suppose (\ref{whitney_ext_thm_label_5}) does not hold. Then, there exist a multi-index $\alpha_0$ with $\abs{\alpha_0}\leq m$, a number $\eta>0$ and for any $\nu\in\mathbb{N}$ distinct points $\vec x_\nu,\vec y_\nu\in \tilde U_\delta$ such that $\abs{\vec x_\nu-\vec y_\nu}\to0$ as $\nu\to\infty$ and \begin{gather}\label{whitney_ext_thm_label_6}\abs{\partial^{\alpha_{0}}
F(\vec
x_\nu)-\sum\limits_{\abs{\gamma}\leq m-\abs{\alpha_{o}}}\frac{1}{\gamma!}\partial^{\alpha_{0}+\gamma}
F(\vec y_\nu)\cdot(\vec x_\nu-\vec y_\nu)^{\gamma}}\geq \eta\cdot\abs{\vec x_\nu-\vec y_\nu}^{m-\abs{\alpha_{0}}} \text{ for all }\nu\in\mathbb{N}.\end{gather}Since  $\tilde U_\delta$ is a compact that is contained in the open set $U_\delta$, by passing to a subsequence we may assume $\vec x_\nu,\vec y_\nu\to \vec z\in U_\delta$ as $\nu\to \infty$, and that moreover there exists a closed ball $B$ centered at $\vec z$ such that $B\subset U_\delta$ and $\vec x_\nu,\vec y_\nu\in B$ for any $\nu\in \mathbb{N}$. Note that $F$, and
all of its derivatives up to order $m$, are uniformly continuous on the closed ball $B$. Let $\omega(\cdot)$ be the modulus of continuity of the $m^{\text{th}}$ derivatives of $F$ on $B$. Then, \begin{gather}\label{whitney_ext_thm_label_7}\omega(t)\to0\text{ as }t\to0,\end{gather}and by Taylor's Theorem there exists a constant $\tilde C(m,n)>0$ depending only on $m$ and $n$ such that \begin{multline}\label{whitney_ext_thm_label_8}\abs{\partial^\alpha
F(\vec
x)-\sum\limits_{\abs{\gamma}\leq m-\abs{\alpha}}\frac{1}{\gamma!}\partial^{\alpha+\gamma}
F(\vec y)\cdot(\vec x-\vec y)^{\gamma}}\leq \tilde C(m,n)\cdot \omega(\abs{\vec x-\vec y})\cdot \abs{\vec x-\vec y}^{m-\abs{\alpha}}
\\ \text { for all }\vec x,\vec y\in B 
\text{ distinct, and all }\abs{\alpha}\leq m.\end{multline} Applying (\ref{whitney_ext_thm_label_7}) and (\ref{whitney_ext_thm_label_8}) with $\vec x=\vec x_\nu$ and $\vec y=\vec y_\nu$ we get \begin{gather}\label{whitney_ext_thm_label_9}\abs{\partial^{\alpha_{0}}
F(\vec
x_\nu)-\sum\limits_{\abs{\gamma}\leq m-\abs{\alpha_{o}}}\frac{1}{\gamma!}\partial^{\alpha_{0}+\gamma}
F(\vec y_\nu)\cdot(\vec x_\nu-\vec y_\nu)^{\gamma}}=o(\abs{\vec x_\nu-\vec y_\nu}^{m-\abs{\alpha_{0}}}) \text{ as }\nu\to\infty .\end{gather} Now, (\ref{whitney_ext_thm_label_9}) clearly contradicts (\ref{whitney_ext_thm_label_6}), and so we proved that (\ref{whitney_ext_thm_label_5}) holds. 

Set $P^{\vec x}=J_{\vec x}F$ (the $m^{th}$-jet of $F$ about $\vec x$) for any $\vec x\in\tilde U_\delta$. Now (\ref{whitney_ext_thm_label_1}), (\ref{whitney_ext_thm_label_2}) and (\ref{whitney_ext_thm_label_5}) tell us that \begin{gather}\label{whitney_ext_thm_label_1_prime}\abs{(\partial^\alpha
P^{\vec x})(\vec x)}\leq M\text { for all
}\vec x\in \tilde U_\delta\text{ and all }\abs{\alpha}\leq
m;\end{gather} \begin{multline}\label{whitney_ext_thm_label_2_prime}\abs{(\partial^\alpha P^{\vec x})(\vec
x)-\sum\limits_{\abs{\gamma}\leq m-\abs{\alpha}}\frac{1}{\gamma!}[(\partial^{\alpha+\gamma}
P^{\vec y})(\vec y)]\cdot(\vec x-\vec y)^{\gamma}}\leq M\abs{\vec x-\vec
y}^{m-\abs{\alpha}} \\ \text { for all }\vec x,\vec y\in \tilde U_\delta
\text{ distinct, and for all }\abs{\alpha}\leq m;\end{multline}and \begin{multline}\label{whitney_ext_thm_label_5_prime}\abs{(\partial^\alpha
P^{\vec x})(\vec
x)-\sum\limits_{\abs{\gamma}\leq m-\abs{\alpha}}\frac{1}{\gamma!}[(\partial^{\alpha+\gamma}
P^{\vec y})(\vec y)]\cdot(\vec x-\vec y)^{\gamma}}=o(\abs{\vec x-\vec y}^{m-\abs{\alpha}})
\\ \text { as }\abs{\vec x-\vec y}\to 0,\text{ subject to }\vec x,\vec y\in
\tilde U_\delta
\text{ distinct, for all }\abs{\alpha}\leq m.\end{multline}The above (\ref{whitney_ext_thm_label_1_prime})-(\ref{whitney_ext_thm_label_5_prime}) are the hypothesis of the classical Whitney's Extension Theorem (see \cite{W} and \cite[Theorem 2.3]{FI}), and so there exists a constant $C(m,n)>0$ depending only on $m$ and $n$ and $\tilde F\in C^m(\mathbb{R}^n)$ such that \begin{gather}\label{whitney_ext_thm_label_3_prime}\abs{\partial^\alpha\tilde
F(\vec x)}\leq C(m,n)\cdot M\text { for all
}\vec x\in \mathbb{R}^n\text{ and all }\abs{\alpha}\leq
m;\end{gather} and \begin{gather}\label{whitney_ext_thm_label_4_prime}J_{\vec x}(\tilde F)=P^{\vec x}(=J_{\vec x}F)\text { for all
}\vec x\in \tilde U_\delta.\end{gather} In particular (\ref{whitney_ext_thm_label_4_prime}) implies that \begin{gather}\label{whitney_ext_thm_label_4_prime_prime}\tilde F(\vec x)=F(\vec
x)\text { for all
}\vec x\in \tilde U_\delta,\end{gather} which together with (\ref{whitney_ext_thm_label_3_prime}) implies that (\ref{whitney_ext_thm_label_3}) and (\ref{whitney_ext_thm_label_4}) indeed hold.\end{proof}

\begin{lemma}\label{lemma_euqiv_def_of_neg_fun}Definitions \ref{def_negligible_functions} and \ref{def_whitney_negligible_functions} coincide(i.e., let
$U\subset\mathbb{R}^n$ be open, let $\Omega\subset S^{n-1}$ and let $F\in C^m(U)$. Then, $F$ is negligible for $\Omega$ if and only if $F$ is Whitney-negligible for $\Omega$).\end{lemma}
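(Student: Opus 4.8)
The plan is to prove the two implications separately. For the direction ``Whitney-negligible $\implies$ negligible'': suppose $F$ is Whitney-negligible for $\Omega$, fix $\epsilon>0$, and take $\delta,r>0$ and $F_\epsilon\in C^m(\mathbb{R}^n\setminus\{\vec 0\})$ as in Definition \ref{def_whitney_negligible_functions}. Condition (\ref{negligible_fun_condition_0}) is immediate from (\ref{whitney_negligible_fun_condition_0}), and condition (\ref{negligible_fun_condition_a}) follows at once from (\ref{whitney_negligible_fun_condition_1}) and (\ref{whitney_negligible_fun_condition_2}), since on $\Gamma(\Omega,\delta,r)$ we have $\partial^\alpha F=\partial^\alpha F_\epsilon$. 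For (\ref{negligible_fun_condition_b}) one applies Taylor's Theorem to $F_\epsilon$ on $\mathbb{R}^n\setminus\{\vec 0\}$: since all derivatives of $F_\epsilon$ of order $\leq m$ are bounded by $\epsilon\abs{\vec x}^{m-\abs{\alpha}}$, one shows that the Taylor remainder of $\partial^\alpha F_\epsilon$ at $\vec y$ evaluated at $\vec x$ is $O(\epsilon\abs{\vec x-\vec y}^{m-\abs{\alpha}})$ — using that along a segment from $\vec y$ to $\vec x$ (which for $\vec x,\vec y$ in a thin cone close to the origin stays at comparable distance to the origin) the top-order derivatives are controlled. Strictly, to get the clean bound $\epsilon\abs{\vec x-\vec y}^{m-\abs{\alpha}}$ rather than $C\epsilon\abs{\vec x-\vec y}^{m-\abs{\alpha}}$ one rescales: run the Whitney-negligibility definition with $\epsilon/C(m,n)$ in place of $\epsilon$. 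This direction is routine.

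The substantive direction is ``negligible $\implies$ Whitney-negligible,'' and here the engine is Lemma \ref{new_cor_from_whitney}. Suppose $F$ is negligible for $\Omega$ and fix $\epsilon>0$. Using the negligibility hypothesis (with a parameter $\epsilon'$ to be chosen, smaller than $\epsilon$ by a dimensional constant), obtain $\delta,r>0$ so that (\ref{negligible_fun_condition_0})--(\ref{negligible_fun_condition_b}) hold on $\Gamma(\Omega,\delta,r)$. The idea is to produce $F_\epsilon$ by a Littlewood–Paley-type decomposition over dyadic annuli: for each integer $\nu$ with $2^{-\nu}$ small enough, look at the piece of $F$ on the chunk of $\Gamma(\Omega,\delta,r)$ at scale $2^{-\nu}$, rescale it to unit scale so that Lemma \ref{new_cor_from_whitney} applies with $M\sim \epsilon' (2^{-\nu})^{?}$ — more precisely, after the rescaling $\vec x\mapsto 2^{-\nu}\vec x$ the hypotheses (\ref{whitney_ext_thm_label_1}), (\ref{whitney_ext_thm_label_2}) of Lemma \ref{new_cor_from_whitney} hold with $M=C\epsilon'$ (the $\abs{\vec x}^{m-\abs{\alpha}}$ weights all become comparable to $1$ on a fixed annulus), obtain the Whitney extension $\tilde F_\nu\in C^m(\mathbb{R}^n)$ from the lemma, rescale back, and glue the pieces with a dyadic partition of unity $\{\theta_\nu\}$ adapted to the annuli $\text{Ann}_2(2^{-\nu})$, exactly as in the partition-of-unity arguments already used in the proof of Lemma \ref{lemma_needed_for_invariance_wrt_coor_changes} and Theorem \ref{main_theorem_on_necessary_condition}. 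The resulting $F_\epsilon:=\sum_\nu \theta_\nu \tilde F_\nu$ lies in $C^m(\mathbb{R}^n\setminus\{\vec 0\})$, agrees with $F$ on a slightly shrunken cone $\Gamma(\Omega,\delta',r')$ (because on that cone each relevant $\theta_\nu$ multiplies a $\tilde F_\nu$ that equals $F$ there, by (\ref{whitney_ext_thm_label_4}), and the $\theta_\nu$ sum to $1$), and the derivative bound $\abs{\partial^\alpha F_\epsilon(\vec x)}\leq \epsilon\abs{\vec x}^{m-\abs{\alpha}}$ follows from the scale-$2^{-\nu}$ bound $\abs{\partial^\alpha \tilde F_\nu}\leq C(m,n)\epsilon' (2^{-\nu})^{m-\abs{\alpha}}$ together with the bounds on $\partial^\alpha\theta_\nu$ and the bounded overlap of the supports — choosing $\epsilon'$ to absorb the dimensional constants $C(m,n), C_1, \ldots$ into $\epsilon$.

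The main obstacle — really the one place demanding care — is verifying that the hypotheses of Lemma \ref{new_cor_from_whitney} genuinely hold at each dyadic scale after rescaling, and in particular that the ``rescaled'' version of (\ref{negligible_fun_condition_b}) gives the Taylor-compatibility estimate (\ref{whitney_ext_thm_label_2}) with the right constant. One must check that $\Gamma(\Omega,\delta,r)$ intersected with a dyadic annulus, after dilating to unit scale, contains the set $U_\delta$ (or a bi-Lipschitz image thereof, absorbed into the dimensional constant) so that the lemma is applicable with a fixed $\delta$ independent of $\nu$; this is where the conic geometry — that $\Gamma(\Omega,\delta,r)$ is genuinely a cone, hence scale-invariant in its angular aperture — does the work, and one should state it cleanly as a preliminary observation. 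The gluing and the derivative-counting via $\partial^\alpha(\theta_\nu \tilde F_\nu)=\sum_{\beta\leq\alpha}\binom{\alpha}{\beta}\partial^\beta\theta_\nu\,\partial^{\alpha-\beta}\tilde F_\nu$ are then standard, matching verbatim the computations in (\ref{new_proof_main_theorem_label_14})--(\ref{new_proof_main_theorem_label_17}), so I would present them briskly and refer back rather than redo them. Finally I would note that the disjointness-up-to-bounded-overlap of $\{\text{Ann}_2(2^{-\nu})\}$ keeps all sums finite at each point, so no convergence issue arises and $F_\epsilon\in C^m(\mathbb{R}^n\setminus\{\vec 0\})$.
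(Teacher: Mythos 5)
Your proposal is correct and follows essentially the same route as the paper's proof: the easy direction via Taylor's Theorem with a rescaled $\epsilon$, and the substantive direction by rescaling each annular piece of the cone to unit scale, applying Lemma \ref{new_cor_from_whitney} there (with the fixed angular aperture surviving the dilation), rescaling back, and gluing with a partition of unity adapted to geometrically shrinking annuli. The paper uses scales $(\frac{9}{10})^k r$ rather than dyadic ones, but this is cosmetic.
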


\begin{proof}If $\Omega=\emptyset$ then one readily sees that any $F\in C^m(U)$ is both negligible for $\Omega$ and Whitney-negligible
for $\Omega$, so we only need to show the equivalence of Definitions \ref{def_negligible_functions}
and \ref{def_whitney_negligible_functions} for $\Omega\neq\emptyset$. Let $F$ be Whitney-negligible for $\Omega\neq\emptyset$ and fix $\epsilon>0$. Let $\delta,r$ and $F_\epsilon$ be such that (\ref{whitney_negligible_fun_condition_1}) and (\ref{whitney_negligible_fun_condition_2}) hold. We immediately have from (\ref{whitney_negligible_fun_condition_1})
and (\ref{whitney_negligible_fun_condition_2}) that
\begin{gather}\label{negligible_equiv_def_lemma_label_1}\abs{\partial^\alpha
F(\vec x)}\leq\epsilon\abs{\vec x}^{m-\abs{\alpha}}\text { for all
}\vec x\in\Gamma(\Omega,\delta,r)\text{ and all }\abs{\alpha}\leq
m.\end{gather}  Moreover, (\ref{whitney_negligible_fun_condition_1}) and (\ref{whitney_negligible_fun_condition_2}) together with Taylor's Theorem imply that for some constant $C_1>0$ (that depends only on $m$ and $n$) we have \begin{multline}\label{negligible_equiv_def_lemma_label_2}\abs{\partial^\alpha F(\vec
x)-\sum\limits_{\abs{\gamma}\leq m-\abs{\alpha}}\frac{1}{\gamma!}\partial^{\alpha+\gamma}
F(\vec y)\cdot(\vec x-\vec y)^{\gamma}}\leq C_1\epsilon\abs{\vec x-\vec
y}^{m-\abs{\alpha}} \\ \text { for all }\vec x,\vec y\in\Gamma(\Omega,\delta,r)
\text{ distinct, and all }\abs{\alpha}\leq m.\end{multline}We conclude that $F$ is negligible for $U$.

\

Let $F$ be negligible for $\Omega$ and fix $\epsilon>0$. Let $\delta$ and $r$ be such that (\ref{negligible_fun_condition_0}), (\ref{negligible_fun_condition_a}) and (\ref{negligible_fun_condition_b})
hold and without loss of generality assume $r<1$. We recall that $$\Gamma(\Omega,\delta,r)=\{\vec
x\in\mathbb{R}^n:0<\abs{\vec x}<r,\text{dist}(\frac{\vec x}{\abs{\vec
x}},\Omega)<\delta\},$$ and for any $0<\rho\leq r$ we set $$E(\rho):=\{\vec
x\in\mathbb{R}^n:\frac{\rho}{10}<\abs{\vec x}<\rho,\text{dist}(\frac{\vec x}{\abs{\vec
x}},\Omega)<\delta\}.$$ We then have from (\ref{negligible_fun_condition_a})
and (\ref{negligible_fun_condition_b}) that for some constant $C_2>0$ that depends only on $m$ and $n$
\begin{gather}\label{negligible_equiv_def_lemma_label_3}\abs{\partial^\alpha
F(\vec x)}\leq C_{2}\epsilon\rho^{m-\abs{\alpha}}\text { for all
}\vec x\in E(\rho)\text{ and all }\abs{\alpha}\leq
m;\end{gather}
\begin{multline}\label{negligible_equiv_def_lemma_label_4}\abs{\partial^\alpha F(\vec
x)-\sum\limits_{\abs{\gamma}\leq m-\abs{\alpha}}\frac{1}{\gamma!}\partial^{\alpha+\gamma}
F(\vec y)\cdot(\vec x-\vec y)^{\gamma}}\leq C_{2} \epsilon\abs{\vec x-\vec
y}^{m-\abs{\alpha}} \\ \text { for all }\vec x,\vec y\in E(\rho)\text{ distinct, and all }\abs{\alpha}\leq m.\end{multline}
Define a function $G\in C^m(\rho^{-1}E(\rho))$ by $G(\vec x):= F(\rho \vec x)$. From (\ref{negligible_equiv_def_lemma_label_3}) and (\ref{negligible_equiv_def_lemma_label_4}) we get that 
\begin{gather}\label{negligible_equiv_def_lemma_label_5}\abs{\partial^\alpha
G(\vec x)}\leq C_{2}\epsilon\rho^{m}\text { for all
}\vec x\in \rho^{-1} E(\rho)\text{ and all }\abs{\alpha}\leq
m;\end{gather}
\begin{multline}\label{negligible_equiv_def_lemma_label_6}\abs{\partial^\alpha
G(\vec
x)-\sum\limits_{\abs{\gamma}\leq m-\abs{\alpha}}\frac{1}{\gamma!}\partial^{\alpha+\gamma}
G(\vec y)\cdot(\vec x-\vec y)^{\gamma}}\leq C_{2}\epsilon\rho^{m}\abs{\vec x-\vec
y}^{m-\abs{\alpha}} \\ \text { for all }\vec x,\vec y\in \rho^{-1} E(\rho)\text{ distinct,
and all }\abs{\alpha}\leq m.\end{multline}
Define $$\hat E(\rho):=\{\vec
x\in\mathbb{R}^n:\frac{\rho}{2}<\abs{\vec x}<\frac{2\rho}{3},\text{dist}(\frac{\vec
x}{\abs{\vec
x}},\Omega)<\frac{\delta}{2}\}.$$Applying Lemma \ref{new_cor_from_whitney} we find that there exists $G_\rho \in C^m(\mathbb{R}^n)$ such that \begin{gather}\label{negligible_equiv_def_lemma_label_7}\abs{\partial^\alpha
G_\rho (\vec x)}\leq C(m,n)\cdot C_{2} \epsilon\rho^{m}\text { for all
}\vec x\in \mathbb{R}^n\text{ and all }\abs{\alpha}\leq
m;\end{gather}
\begin{gather}\label{negligible_equiv_def_lemma_label_8}G_\rho (\vec x)=G(\vec
x)\text { for all
}\vec x\in  \rho^{-1} \hat E(\rho).\end{gather}Define a function $F_\rho \in C^m(\mathbb{R}^n)$ by $F_\rho (\vec x):= G_\rho (\rho^{-1} \vec
x)$. From (\ref{negligible_equiv_def_lemma_label_7}) and (\ref{negligible_equiv_def_lemma_label_8})
we get that \begin{gather}\label{negligible_equiv_def_lemma_label_9}\abs{\partial^\alpha
F_\rho(\vec x)}\leq C(m,n)\cdot C_{2}\epsilon\rho^{m-\abs{\alpha}}\text { for all
}\vec x\in \mathbb{R}^n\text{ and all }\abs{\alpha}\leq
m;\end{gather}
\begin{gather}\label{negligible_equiv_def_lemma_label_10}F_\rho(\vec x)=F(\vec
x)\text { for all
}\vec x\in \hat E(\rho).\end{gather}For $k\in \mathbb{N}\cup\{0\}$ set $\rho_k:=(\frac{9}{10})^{k}r$ and let $\theta_k\in C^m(\mathbb{R}^n)$ be such that the following hold: 
\begin{gather}\label{negligible_equiv_def_lemma_label_11}\sum\limits_{k=0}^{\infty}\theta_k(\vec x)=1\text { for all
}0<\abs{\vec x}\leq\frac{r}{10};\end{gather}
\begin{gather}\label{negligible_equiv_def_lemma_label_12}\text{supp}\theta_k\subset\{\frac{\rho_k}{2}\leq \abs{\vec x}\leq \frac{2\rho_k}{3}\};\end{gather}
\begin{gather}\label{negligible_equiv_def_lemma_label_13}\abs{\partial^\alpha
\theta_k(\vec x)}\leq C_3\cdot \rho_k^{-\abs{\alpha}}\text { for
all
}\vec x\in \mathbb{R}^n\text{ and all }\abs{\alpha}\leq
m,\end{gather}where $C_3$ is some constant depending on $m$ and $n$ only. Define $\tilde F\in C^m(\mathbb{R}^n\setminus\{\vec 0\})$ by $$\tilde F(\vec x):=\sum\limits_{k=0}^{\infty}\theta_k(\vec
x)\cdot F_{\rho_k}(\vec
x).$$ We have from (\ref{negligible_equiv_def_lemma_label_9}), (\ref{negligible_equiv_def_lemma_label_12}) and (\ref{negligible_equiv_def_lemma_label_13}) that for some constant $C_4>0$ that
depends only on $m$ and $n$ \begin{gather}\label{negligible_equiv_def_lemma_label_14}\abs{\partial^\alpha
\tilde F(\vec x)}\leq C_{4}\epsilon\abs{\vec x}^{m-\abs{\alpha}}\text {
for all
}\vec x\in \mathbb{R}^n\setminus\{\vec 0\}\text{ and all }\abs{\alpha}\leq
m,\end{gather}and from (\ref{negligible_equiv_def_lemma_label_10}), (\ref{negligible_equiv_def_lemma_label_11}) and (\ref{negligible_equiv_def_lemma_label_12}) we have \begin{gather}\label{negligible_equiv_def_lemma_label_15} \tilde F(\vec x)=F(\vec
x)\text { for all
}\vec x\in \Gamma(\Omega,\frac{\delta}{2},\frac{r}{10}).\end{gather}We conclude that
from (\ref{negligible_equiv_def_lemma_label_14}) and (\ref{negligible_equiv_def_lemma_label_15}) that $F$ is Whitney-negligible for $U$.\end{proof}

\begin{example}\label{amazing_exaple_for_negligible_part1}Set $n=3$, $m=2$
and $(x,y,z)$ a standard coordinate system on $\mathbb{R}^3$. Let $U=\mathbb{R}^3\setminus\{z=0\}$, $\Omega=\{(0,0,\pm1)\}\subset
S^2$ and let $F(x,y,z)=\frac{y^3}{z}\in C^2(U)$.
Then, $F$ is negligible for $\Omega\cap D(\omega,10^{-3})$, for any $\omega\in\Omega$.

Indeed, fixing $\omega\in\Omega$ we have $\Omega\cap D(\omega,10^{-3})=\{\omega\}$. Given $\epsilon>0$ (and we are allowed to assume $\epsilon<1$) one has to find $\delta,r>0$ such that (\ref{negligible_fun_condition_0}), (\ref{negligible_fun_condition_a}) and (\ref{negligible_fun_condition_b}) hold, with $\Omega$ being replaced by $\{\omega\}$. We claim that taking  $\delta=10^{-100}\cdot \epsilon^{10}$ and $r=1$ we satisfy (\ref{negligible_fun_condition_0}),
(\ref{negligible_fun_condition_a}) and (\ref{negligible_fun_condition_b}): indeed, (\ref{negligible_fun_condition_0}) holds trivially. In order to see  (\ref{negligible_fun_condition_a}) we note that on $\Gamma(\omega,10^{-100}\cdot \epsilon^{10},1)$ we have $\abs{y}<\abs{\frac{\epsilon }{10}z}$. So on this cone we have: 
$$\abs{F^{(0,0,0)}(x,y,z)}=\abs{\frac{y^3}{z}}<\abs{\frac{\epsilon^3
z^2}{1000}}<\epsilon (x^2+y^2+z^2)=\epsilon\abs{\vec x}^2;$$
$$\abs{F^{(0,1,0)}(x,y,z)}=\abs{\frac{3y^2}{z}}<\abs{\frac{3\epsilon^2
z}{100}}<\epsilon (x^2+y^2+z^2)^{1/2}=\epsilon\abs{\vec x};$$
$$\abs{F^{(0,0,1)}(x,y,z)}=\abs{\frac{-y^3}{z^{2}}}<\abs{\frac{\epsilon^3
z}{1000}}<\epsilon (x^2+y^2+z^2)^{1/2}=\epsilon\abs{\vec x};$$
$$\abs{F^{(0,2,0)}(x,y,z)}=\abs{\frac{6y}{z}}<\abs{\frac{6}{10}\epsilon}<\epsilon;\text{ }\abs{F^{(0,0,2)}(x,y,z)}=\abs{\frac{2y^3}{z^{3}}}<\abs{\frac{2\epsilon^3
}{1000}}<\epsilon;$$
$$\abs{F^{(0,1,1)}(x,y,z)}=\abs{\frac{-3y^2}{z^{2}}}<\abs{\frac{3\epsilon^2
}{100}}<\epsilon.$$
All the other partial derivatives of $F$ are identically zero, so we showed that indeed (\ref{negligible_fun_condition_a}) holds. Finally, as $\Gamma(\omega,10^{-100}\cdot
\epsilon^{10},1)$ is convex, Taylor's Theorem together with (\ref{negligible_fun_condition_a}) implies (\ref{negligible_fun_condition_b}).
\end{example}

\begin{lemma}[patching negligible functions]\label{patching_negligible_functions_lemma}Let
$U\subset\mathbb{R}^n$ be open and let $\Omega\subset S^{n-1}$. Let $\delta_1\dots,\delta_K>0$, let $\omega_1,\dots,\omega_K\in S^{n-1}$ and define $\Omega_k:=\Omega\cap D(\omega_{k},\delta_k)$ for any
$1\leq k\leq K$. Suppose that for any $1\leq k\leq K$ we are given $F_k\in C^m(U)$, such that $F_k$ is Whitney-negligible (or equivalently negligible) for $\Omega_k$. Suppose that moreover we are given $\theta_1,\dots,\theta_K\in C^m(\mathbb{R}^n\setminus\{\vec 0\})$ and a constant $\hat C>0$ such that for any $1\leq k\leq K$ we have \begin{gather}\label{patching_neg_lemma_label_1}\text{supp}\theta_k\subset\{\vec x\in\mathbb{R}^n:\abs{\frac{\vec x}{\abs{\vec
x}}-\omega_k}\leq\frac{2}{3}\delta_k\};\end{gather}and
\begin{gather}\label{patching_neg_lemma_label_2}\abs{\partial^\alpha\theta_k(\vec x)}\leq\hat C\abs{\vec x}^{-\abs{\alpha}}\text { for all
}\vec x\in \mathbb{R}^n\setminus\{\vec 0\}\text{ and all }\abs{\alpha}\leq
m.\end{gather}Then, $F(\vec x)=\sum\limits_{k=1}^{K}\theta_k(\vec x)\cdot F_k(\vec x)$ is Whitney-negligible (or equivalently negligible) for $\Omega$.

\end{lemma}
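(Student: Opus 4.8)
The plan is to prove that $F$ is \emph{Whitney}-negligible for $\Omega$ in the sense of Definition \ref{def_whitney_negligible_functions}; this loses nothing, since by Lemma \ref{lemma_euqiv_def_of_neg_fun} Whitney-negligibility coincides with negligibility, and it is by far the more convenient notion to patch. If $\Omega=\emptyset$ the conclusion is immediate, so assume $\Omega\neq\emptyset$.

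Fix $\epsilon>0$ and put $\eta:=\epsilon\cdot(2^mK\hat C)^{-1}$. For each $k$ with $\Omega_k\neq\emptyset$ I would apply Definition \ref{def_whitney_negligible_functions} to $F_k$ at level $\eta$, obtaining $\delta_k',r_k'>0$ and $G_k\in C^m(\mathbb{R}^n\setminus\{\vec 0\})$ with $\Gamma(\Omega_k,\delta_k',r_k')\subset U$, with $\abs{\partial^\alpha G_k(\vec x)}\leq\eta\abs{\vec x}^{m-\abs{\alpha}}$ for all $\vec x\neq\vec 0$ and all $\abs{\alpha}\leq m$, and with $G_k=F_k$ on $\Gamma(\Omega_k,\delta_k',r_k')$; for the (possibly none) indices $k$ with $\Omega_k=\emptyset$ I would take $G_k:=0$, $\delta_k':=r_k':=1$, so that the same three properties hold, the last vacuously. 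I would then set $F_\epsilon:=\sum_{k=1}^K\theta_k G_k\in C^m(\mathbb{R}^n\setminus\{\vec 0\})$, $\delta:=\min\{\frac{1}{3}\delta_1,\dots,\frac{1}{3}\delta_K,\delta_1',\dots,\delta_K'\}$, and $r:=\min\{r_1',\dots,r_K'\}$, and claim that the triple $(\delta,r,F_\epsilon)$ witnesses Whitney-negligibility of $F$ for $\Omega$ at level $\epsilon$.

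The size bound (\ref{whitney_negligible_fun_condition_1}) for $F_\epsilon$ is a direct Leibniz computation: $\partial^\alpha(\theta_k G_k)=\sum_{\beta\leq\alpha}\binom{\alpha}{\beta}\partial^\beta\theta_k\,\partial^{\alpha-\beta}G_k$, and by (\ref{patching_neg_lemma_label_2}) together with the bound on $G_k$ the $\beta$-term is at most $\binom{\alpha}{\beta}\hat C\abs{\vec x}^{-\abs{\beta}}\cdot\eta\abs{\vec x}^{m-\abs{\alpha}+\abs{\beta}}=\binom{\alpha}{\beta}\hat C\eta\abs{\vec x}^{m-\abs{\alpha}}$; summing over $\beta\leq\alpha$ (total binomial weight $2^{\abs{\alpha}}\leq2^m$) and over the $K$ values of $k$ gives $\abs{\partial^\alpha F_\epsilon(\vec x)}\leq 2^mK\hat C\eta\,\abs{\vec x}^{m-\abs{\alpha}}=\epsilon\abs{\vec x}^{m-\abs{\alpha}}$. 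For the identity (\ref{whitney_negligible_fun_condition_2}) I would write $F_\epsilon-F=\sum_k\theta_k(G_k-F_k)$ and show each summand vanishes on $\Gamma(\Omega,\delta,r)$. Fix $k$ and $\vec x\in\Gamma(\Omega,\delta,r)$ with $\theta_k(\vec x)\neq0$: by (\ref{patching_neg_lemma_label_1}) we have $\abs{\frac{\vec x}{\abs{\vec x}}-\omega_k}\leq\frac{2}{3}\delta_k$, while $\text{dist}(\frac{\vec x}{\abs{\vec x}},\Omega)<\delta$ supplies $\omega^*\in\Omega$ with $\abs{\frac{\vec x}{\abs{\vec x}}-\omega^*}<\delta$. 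Since $\delta\leq\frac{1}{3}\delta_k$, the triangle inequality gives $\abs{\omega^*-\omega_k}<\delta+\frac{2}{3}\delta_k\leq\delta_k$, so $\omega^*\in\Omega\cap D(\omega_k,\delta_k)=\Omega_k$ (in particular $\Omega_k\neq\emptyset$); hence $\text{dist}(\frac{\vec x}{\abs{\vec x}},\Omega_k)\leq\abs{\frac{\vec x}{\abs{\vec x}}-\omega^*}<\delta\leq\delta_k'$ and $\abs{\vec x}<r\leq r_k'$, i.e. $\vec x\in\Gamma(\Omega_k,\delta_k',r_k')$, so $G_k(\vec x)=F_k(\vec x)$ and the $k$-th summand vanishes at $\vec x$. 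The same reasoning disposes of (\ref{whitney_negligible_fun_condition_0}): interpreting each $\theta_kF_k$ as extended by $0$ off $\text{supp}\,\theta_k$, the function $F$ is $C^m$ on the open set $(U\setminus\{\vec 0\})\cup\bigcap_k(\mathbb{R}^n\setminus\text{supp}\,\theta_k)$, and any $\vec x\in\Gamma(\Omega,\delta,r)$ either avoids every $\text{supp}\,\theta_k$ or, by the computation above, lies in some $\Gamma(\Omega_k,\delta_k',r_k')\subset U$, hence lies in that set.

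I expect the only delicate point to be the choice of $\delta$ in the previous paragraph: it must be small enough (the factor $\frac{1}{3}$) that $\text{supp}\,\theta_k$ can meet the cone $\Gamma(\Omega,\delta,r)$ only inside $\Gamma(\Omega_k,\delta_k',r_k')$, and simultaneously at most each $\delta_k'$, so that on that cone $G_k$ has already caught up with $F_k$. Everything else — the Leibniz estimate, the reduction to Definition \ref{def_whitney_negligible_functions} through Lemma \ref{lemma_euqiv_def_of_neg_fun}, the trivial case $\Omega=\emptyset$, and the minor bookkeeping when some $\Omega_k$ is empty — is routine.
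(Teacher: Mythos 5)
Your proposal is correct and follows essentially the same route as the paper: apply the Whitney-negligibility of each $F_k$ for $\Omega_k$, patch the resulting extensions with the $\theta_k$ via a Leibniz estimate using (\ref{patching_neg_lemma_label_2}), and use the support condition (\ref{patching_neg_lemma_label_1}) together with a triangle-inequality argument (shrinking $\delta$ to a fixed fraction of $\min_k\delta_k$ and below $\min_k\delta_k'$) to show the patched function agrees with $F$ on $\Gamma(\Omega,\delta,r)$. The only differences are cosmetic: you normalize $\eta$ so the final bound is exactly $\epsilon$ where the paper tolerates a harmless constant, and you spell out the degenerate cases $\Omega=\emptyset$ and $\Omega_k=\emptyset$, which the paper leaves implicit.
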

\begin{proof}Fix $\epsilon>0$ and for each $1\leq k\leq K$ let $r_k,\tilde\delta_k>0$ and $\tilde F_{\epsilon,k}\in C^m(\mathbb{R}^n\setminus\{\vec0\})$ be such that (\ref{whitney_negligible_fun_condition_0}), (\ref{whitney_negligible_fun_condition_1}) and (\ref{whitney_negligible_fun_condition_2}) hold for $\Omega_k$. So we have \begin{gather}\label{patching_neg_lemma_label_3}\abs{\partial^\alpha
\tilde F_{\epsilon,k}(\vec x)}\leq\epsilon\abs{\vec x}^{m-\abs{\alpha}}\text { for all
}\vec x\in\mathbb{R}^n\setminus\{\vec0\}\text{ and all }\abs{\alpha}\leq
m;\end{gather}
\begin{gather}\label{patching_neg_lemma_label_4}\tilde F_{\epsilon,k}(\vec x)=F_{k}(\vec
x)\text { for all
}\vec x\in\Gamma(\Omega_{k},\tilde \delta_k,r_k).\end{gather}
Since (\ref{whitney_negligible_fun_condition_0}), (\ref{whitney_negligible_fun_condition_2}) and (\ref{patching_neg_lemma_label_4}) are preserved under replacing $\delta$ and $\tilde \delta_k$ by smaller numbers, we may assume without loss of generality that $\tilde \delta_k\leq\delta_k$ for each $1\leq k \leq K$. Set $$\tilde F_{\epsilon}(\vec x):=\sum\limits_{k=1}^{K}\theta_k(\vec x)\cdot \tilde F_{\epsilon,k}(\vec x).$$ By (\ref{patching_neg_lemma_label_2}) and (\ref{patching_neg_lemma_label_3}) for some constant $C_1>0$ (that may be dependent on $K$ as well, in addition to $m$ and $n$) we have \begin{gather}\label{patching_neg_lemma_label_5}\abs{\partial^\alpha
\tilde F_{\epsilon}(\vec x)}\leq C_{1}\hat C\epsilon\abs{\vec x}^{m-\abs{\alpha}}\text
{ for all
}\vec x\in\mathbb{R}^n\setminus\{\vec0\}\text{ and all }\abs{\alpha}\leq
m.\end{gather}Now set $r:=10^{-10}\min\{r_1,\dots ,r_K\}$, $\delta:=10^{-9}\min\{\tilde\delta_1,\dots,\tilde\delta_K\}$, and let $\vec x\in\Gamma(\Omega,\delta,r)$. In particular we have $0<\abs{\vec x}<r\leq r_k$ for any $1\leq k\leq K$. Suppose that moreover $\vec x\in \text{supp}\theta_k$ for some $1\leq k\leq K$. We then have by (\ref{patching_neg_lemma_label_1}) that $$\abs{\frac{\vec x}{\abs{\vec x}}-\omega_k}\leq\frac{2}{3}\delta_k,$$and moreover since $\vec x\in\Gamma(\Omega,\delta,r)$ we also have $$\abs{\frac{\vec x}{\abs{\vec x}}-\omega''}\leq10^{-9}\tilde\delta_k\leq10^{-9}\delta_k\text{ for some }\omega''\in\Omega.$$ We conclude that $$\abs{\omega''-\omega_k}\leq\frac{2}{3}\delta_k+10^{-9}\delta_k<\delta_k\text{ for some }\omega''\in\Omega,$$hence $\omega''\in \Omega_k$, and so $\text{dist}(\frac{\vec x}{\abs{\vec x}},\Omega_k)<\tilde \delta_k$. Recall that also $0<\abs{\vec x}\leq r_k$ so $\vec x\in\Gamma(\Omega_k,\tilde \delta_k,r_k)$. We conclude from (\ref{patching_neg_lemma_label_4}) that \begin{gather}\label{patching_neg_lemma_label_6}\tilde
F_{\epsilon,k}(\vec
x)=F_k(\vec
x)\text { for all
}\vec x\in\Gamma(\Omega,\delta,r)\cap\text{supp}\theta_k.\end{gather}Consequently we have $$\tilde F_{\epsilon}(\vec x)=\sum\limits_{k=1}^{K}\theta_k(\vec x)\cdot\tilde
F_{\epsilon,k}(\vec
x)=\sum\limits_{k=1}^{K}\theta_k(\vec x)\cdot
F_k(\vec x)=F(\vec x)\text { for all
}\vec x\in\Gamma(\Omega,\delta,r),$$which together with (\ref{patching_neg_lemma_label_5}) proves that $F$ is Whitney-negligible for $\Omega$.\end{proof}

\subsection{Strong directional implication and strong implication}

\begin{definition}[strong directional implication]\label{implied_polynomial_definition}Let $I\lhd\mathcal{P}_0^{m}(\mathbb{R}^n)$
be an ideal and $p\in\mathcal{P}_0^{m}(\mathbb{R}^n)$ be some polynomial. We
say that\textit{ $I$ strongly implies $p$ in the direction $\omega\in S^{n-1}$
}if there exist $\delta_\omega>0$, $r_\omega>0$, polynomials $Q_1,\dots ,Q_L\in I$,
functions $ S_1,\dots ,S_L\in C^m(\Gamma(\omega,\delta_\omega,r_\omega))$,  positive
constants $C_1,\dots ,C_L>0$ and a function $F\in C^m(\Gamma(\omega,\delta_\omega,r_\omega))$
 such that the following hold:
\begin{gather}\label{first_cond_of_implied_poly_in_direction}F\text{
is negligible for }Allow(I)\cap D(\omega,\delta_\omega);\end{gather}
\begin{gather}\label{second_cond_of_implied_poly_in_direction}\abs{\partial
^\alpha S_{l}(\vec x)}\leq C_{l}\abs{\vec x}^{-\abs{\alpha}}\text{ for all
}\abs{\alpha}\leq
m, \text{ all }1\leq l \leq L\text{ and all }\vec x\in\Gamma(\omega,\delta_\omega,r_\omega);\end{gather}
\begin{gather}\label{third_cond_of_implied_poly_in_direction}p(\vec x)=\sum\limits_{l=1}^{L}S_l(\vec
x)\cdot Q_l(\vec x)+F(\vec x)\text { for all }\vec x\in\Gamma(\omega,\delta_\omega,r_\omega).\end{gather}\end{definition}

\begin{remark}\label{remark_strong_implication_holds_trivially_in_forb_directions}If $\omega\notin Allow(I)$ then $I$ always strongly implies $p$ in the direction $\omega$, for any $p\in\mathcal{P}_0^{m}(\mathbb{R}^n)$. Indeed, recall that $Allow(I)$ is closed (see Definition \ref{definition_allowed_and_forbidden_directions}) and fix some $\delta_\omega>0$ such that $Allow(I)\cap D(\omega,\delta_\omega)=\emptyset$.
 Now (\ref{first_cond_of_implied_poly_in_direction})--(\ref{third_cond_of_implied_poly_in_direction}) hold with $F=p$, $L=C_{1}=r_{\omega}=1$, $Q_1=S_1=0$. \end{remark}

\begin{definition}[strong implication]\label{strong_implied_polynomial_definition}Let
$I\lhd\mathcal{P}_0^{m}(\mathbb{R}^n)$
be an ideal and $p\in\mathcal{P}_0^{m}(\mathbb{R}^n)$ be some polynomial. We
say that\textit{ $I$ strongly implies  $p$} if there exist $r_0>0$, polynomials $Q_1,\dots ,Q_L\in
I$,
functions $ S_1,\dots ,S_L\in C^m(B^\times(r_0))$,
 positive
constants $C_1,\dots ,C_L>0$ and a function $F\in C^m(B^\times(r_0))$
 such that the following hold:
\begin{gather}\label{first_cond_of_strong_implied_poly_in_direction}F\text{
is negligible for }Allow(I);\end{gather}
\begin{gather}\label{second_cond_of_strong_implied_poly_in_direction}\abs{\partial
^\alpha S_{l}(\vec x)}\leq C_{l}\abs{\vec x}^{-\abs{\alpha}}\text{ for all
}\abs{\alpha}\leq
m, \text{ all }1\leq l \leq L\text{ and all }\vec x\in B^\times(r_0);\end{gather}
\begin{gather}\label{third_cond_of_strong_implied_poly_in_direction}p(\vec x)=\sum\limits_{l=1}^{L}S_l(\vec
x)\cdot Q_l(\vec x)+F(\vec x)\text { for all }\vec x\in B^\times(r_0).\end{gather}\end{definition}

\begin{remark}Let  
$I\lhd\mathcal{P}_0^{m}(\mathbb{R}^n)$ be an ideal
 and let  $p\in\mathcal{P}_0^{m}(\mathbb{R}^n)$ be a jet. Clearly, if $I$ strongly implies $p$ then $I$
strongly implies $p$ in the direction $\omega$ for any $\omega\in Allow(I)$. The following Lemma \ref{strong_directional_imply_strog_lemma} shows that the converse also holds.\end{remark} 

\begin{lemma}\label{strong_directional_imply_strog_lemma}Let
$I\lhd\mathcal{P}_0^{m}(\mathbb{R}^n)$
be an ideal and $p\in\mathcal{P}_0^{m}(\mathbb{R}^n)$ be some polynomial. If $I$ strongly implies $p$ in the direction $\omega$ for any $\omega\in Allow(I)$, then $I$ strongly implies $p$. \end{lemma}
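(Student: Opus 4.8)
The plan is a compactness-and-patching argument resting on Lemma \ref{patching_negligible_functions_lemma}. First I would note that, by Remark \ref{remark_strong_implication_holds_trivially_in_forb_directions}, the hypothesis in fact gives that $I$ strongly implies $p$ in \emph{every} direction $\omega\in S^{n-1}$ (for $\omega\notin Allow(I)$ this is automatic). So for each $\omega\in S^{n-1}$ fix data $\delta_\omega,r_\omega>0$, polynomials $Q_1^\omega,\dots,Q_{L_\omega}^\omega\in I$, functions $S_1^\omega,\dots,S_{L_\omega}^\omega,F^\omega\in C^m(\Gamma(\omega,\delta_\omega,r_\omega))$ and constants $C_1^\omega,\dots,C_{L_\omega}^\omega>0$ furnished by Definition \ref{implied_polynomial_definition}. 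The open domes $\{D(\omega,\tfrac12\delta_\omega)\}_{\omega\in S^{n-1}}$ cover the compact sphere $S^{n-1}$, so I pass to a finite subcover indexed by $\omega_1,\dots,\omega_K$; write $\delta_k:=\delta_{\omega_k}$, $r_k:=r_{\omega_k}$, $L_k:=L_{\omega_k}$, etc., and set $r_0:=\min\{r_1,\dots,r_K\}$.

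Next I would introduce an angular partition of unity adapted to this subcover: functions $\theta_1,\dots,\theta_K\in C^\infty(\mathbb{R}^n\setminus\{\vec 0\})$, each homogeneous of degree $0$ away from the origin, with $\text{supp}\,\theta_k\subset\{\vec x:\abs{\tfrac{\vec x}{\abs{\vec x}}-\omega_k}\le\tfrac23\delta_k\}$, with $\sum_{k=1}^K\theta_k\equiv1$ on $\mathbb{R}^n\setminus\{\vec 0\}$, and with $\abs{\partial^\alpha\theta_k(\vec x)}\le\hat C\abs{\vec x}^{-\abs{\alpha}}$ for all $\abs{\alpha}\le m$ and some constant $\hat C$. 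Such a partition of unity is standard: pull back a smooth partition of unity on $S^{n-1}$ subordinate to the cover by the $\tfrac12\delta_k$-domes, slightly fatten the supports so they lie inside the $\tfrac23\delta_k$-domes, and extend by homogeneity of degree zero. The point of the split "$\tfrac12\delta$ for the cover, $\tfrac23\delta$ for the supports" is that $\text{supp}\,\theta_k\cap B^\times(r_0)\subset\Gamma(\omega_k,\delta_k,r_k)$ (using $\abs{\vec x}<r_0\le r_k$ and $\abs{\tfrac{\vec x}{\abs{\vec x}}-\omega_k}<\delta_k$ on that support), so on $\text{supp}\,\theta_k$ all of the $\omega_k$-data is defined and in particular \eqref{third_cond_of_implied_poly_in_direction} is valid there. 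This also sets up the hypotheses of Lemma \ref{patching_negligible_functions_lemma}.

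Then I would assemble the global data. Take $\{Q_{(k,l)}:=Q^{\omega_k}_l\}_{1\le k\le K,\ 1\le l\le L_k}\subset I$; put $S_{(k,l)}:=\theta_k S^{\omega_k}_l$ (extended by $0$ off $\text{supp}\,\theta_k$; this is $C^m$ on $B^\times(r_0)$ because $\theta_k$ vanishes identically near every point of $B^\times(r_0)\setminus\Gamma(\omega_k,\delta_k,r_k)$); and put $F:=\sum_{k=1}^K\theta_k F^{\omega_k}$. By the Leibniz rule together with \eqref{second_cond_of_implied_poly_in_direction} and the bound on $\theta_k$, the $S_{(k,l)}$ satisfy \eqref{second_cond_of_strong_implied_poly_in_direction}. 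By \eqref{first_cond_of_implied_poly_in_direction} each $F^{\omega_k}$ is negligible for $Allow(I)\cap D(\omega_k,\delta_k)$, so Lemma \ref{patching_negligible_functions_lemma} (with $\Omega=Allow(I)$, using the partition $\{\theta_k\}$) shows that $F$ is negligible for $Allow(I)$, i.e.\ \eqref{first_cond_of_strong_implied_poly_in_direction} holds. Finally, for $\vec x\in B^\times(r_0)$, using $\sum_k\theta_k(\vec x)=1$ and applying \eqref{third_cond_of_implied_poly_in_direction} on each $\text{supp}\,\theta_k$ (where $\vec x\in\Gamma(\omega_k,\delta_k,r_k)$),
\[
p(\vec x)=\sum_{k=1}^K\theta_k(\vec x)p(\vec x)=\sum_{k=1}^K\theta_k(\vec x)\Big[\sum_{l=1}^{L_k}S^{\omega_k}_l(\vec x)Q^{\omega_k}_l(\vec x)+F^{\omega_k}(\vec x)\Big]=\sum_{(k,l)}S_{(k,l)}(\vec x)Q_{(k,l)}(\vec x)+F(\vec x),
\]
which is \eqref{third_cond_of_strong_implied_poly_in_direction}. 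Hence $I$ strongly implies $p$.

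I expect no genuine difficulty here; the only points requiring care (more bookkeeping than obstacle) are: (i) the support-versus-domain matching described above, which forces the $\tfrac12\delta$/$\tfrac23\delta$ split and $r_0\le\min_k r_k$ so that every term of each local identity is honestly defined on the relevant piece; and (ii) checking that Lemma \ref{patching_negligible_functions_lemma} applies verbatim — in particular, that the trivial data supplied by Remark \ref{remark_strong_implication_holds_trivially_in_forb_directions} on forbidden directions (where $F^{\omega_k}=p$ is not flat, but $Allow(I)\cap D(\omega_k,\delta_k)=\emptyset$) is harmless, since negligibility for $\emptyset$ holds trivially.
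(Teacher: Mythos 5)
Your proposal is correct and follows essentially the same compactness-and-patching argument as the paper: reduce to all directions via the remark on forbidden directions, take a finite subcover of $S^{n-1}$, pull back a degree-zero-homogeneous partition of unity, and invoke Lemma \ref{patching_negligible_functions_lemma} for the negligible part. The only cosmetic difference is that the paper fixes a single vector-space basis $Q_1,\dots,Q_L$ of $I$ used uniformly in all directions, whereas you take the union of the direction-dependent $Q$'s indexed by pairs $(k,l)$; both are permitted by Definition \ref{strong_implied_polynomial_definition}.
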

\begin{proof}By Remark \ref{remark_strong_implication_holds_trivially_in_forb_directions} and our assumption, we have that $I$ strongly implies $p$ in any direction $\omega\in S^{n-1}$. Fix $Q_1,\dots, Q_L$ a basis of $I$ (as a vector space). Then, for  each $\omega\in S^{n-1}$ there exist $\delta_\omega>0$, $r_\omega>0$, 
functions $ S^\omega_1,\dots S^\omega_L\in C^m(\Gamma(\omega,\delta_\omega,r_\omega))$,
 positive
constants $C^\omega_1,\dots ,C^\omega_L>0$ and a function $F^\omega\in C^m(\Gamma(\omega,\delta_\omega,r_\omega))$
 such that the following hold:
\begin{gather}\label{strong_directional_imply_strog_lemma_label_1}F^\omega\text{
is negligible for }Allow(I)\cap D(\omega,\delta_\omega);\end{gather}
\begin{gather}\label{strong_directional_imply_strog_lemma_label_1_prime}\abs{\partial
^\alpha S^\omega_{l}(\vec x)}\leq C^\omega_{l}\abs{\vec x}^{-\abs{\alpha}}\text{ for all
}\abs{\alpha}\leq
m, \text{ all }1\leq l \leq L\text{ and all }\vec x\in\Gamma(\omega,\delta_\omega,r_\omega);\end{gather}
\begin{gather}\label{strong_directional_imply_strog_lemma_label_3}p(\vec x)=\sum\limits_{l=1}^{L}S^\omega_l(\vec
x)\cdot Q_l(\vec x)+F^\omega(\vec x)\text { for all }\vec x\in\Gamma(\omega,\delta_\omega,r_\omega).\end{gather}
By compactness of $S^{n-1}$ there exists finitely many $\omega_1,\dots,\omega_K$ such that $S^{n-1}=\bigcup\limits_{k=1}^K D(\omega_k,\frac{\delta_{\omega_k}}{100})$. Fix $\tilde\theta_1,\dots,\tilde\theta_K\in C^\infty(S^{n-1})$ such that $\sum\limits_{k=1}^K\tilde\theta_k(\omega)=1$ for any $\omega\in S^{n-1}$ and $\text{supp}\tilde \theta_k\subset D(\omega_k,\frac{\delta_{\omega_k}}{50})$ for all $1\leq k\leq K$. Define $\theta_k(\vec x):=\tilde\theta_k(\frac{\vec x}{\abs{\vec x}})$ for all $\vec x\in\mathbb{R}^n\setminus\{\vec 0\}$ and
$1\leq k\leq K$ we get that for some constant $\hat C>0$ (depending on $m,n$ and $\theta_1,\dots,\theta_K$) we have  
\begin{gather}\label{strong_directional_imply_strog_lemma_label_4}\theta_1,\dots,\theta_K\in C^\infty(\mathbb{R}^n\setminus\{\vec 0\});\end{gather}
\begin{gather}\label{strong_directional_imply_strog_lemma_label_5}\sum\limits_{k=1}^K\theta_k(\vec x)=1\text{ for all }\vec x\in\mathbb{R}^n\setminus\{\vec 0\};\end{gather}
\begin{gather}\label{strong_directional_imply_strog_lemma_label_6}\theta_k(\vec
x)=0\text{ if }\abs{\frac{\vec x}{\abs{\vec x}}-\omega_k}>\frac{2}{3}\delta_{\omega_k}\text{ for all }1\leq k\leq K;\end{gather}
\begin{gather}\label{strong_directional_imply_strog_lemma_label_7}\abs{\partial^\alpha\theta_k(\vec
x)}\leq\hat C\abs{\vec x}^{-\abs{\alpha}}\text { for all
}\vec x\in \mathbb{R}^n\setminus\{\vec 0\}\text{ and all }\abs{\alpha}\leq
m.\end{gather} Set $\hat r=\min\{r_{\omega_1},\dots,r_{\omega_k}\}$. Thanks to (\ref{strong_directional_imply_strog_lemma_label_6}) we can define $F,S_1,S_2,\dots, S_L\in C^m(B^\times(\hat r))$ by 
\begin{gather}\label{strong_directional_imply_strog_lemma_label_8}F(\vec x):=\sum\limits_{k=1}^K\theta_k(\vec
x)F^{\omega_k}(\vec x)\text{ and }S_{l}(\vec
x):=\sum\limits_{k=1}^K\theta_k(\vec
x)S^{\omega_k}_l(\vec x)\text { for all
}\vec x\in \mathbb{R}^n\setminus\{\vec 0\}.\end{gather}Thanks to (\ref{strong_directional_imply_strog_lemma_label_1}), (\ref{strong_directional_imply_strog_lemma_label_6}), (\ref{strong_directional_imply_strog_lemma_label_7}) and Lemma \ref{patching_negligible_functions_lemma} we get that
\begin{gather}\label{strong_directional_imply_strog_lemma_label_9}F\text{
is negligible for }Allow(I).\end{gather}Thanks to (\ref{strong_directional_imply_strog_lemma_label_1_prime}) and (\ref{strong_directional_imply_strog_lemma_label_7}) we get that for some constant $C'>0$ (depending on $m,n,\hat C$
and $\{C_l^{\omega_k}\}_{1\leq k\leq K,1\leq l\leq L}$) we have
\begin{gather}\label{strong_directional_imply_strog_lemma_label_10}\abs{\partial
^\alpha S_{l}(\vec x)}\leq C'\abs{\vec x}^{-\abs{\alpha}}\text{
for all
}\abs{\alpha}\leq
m, \text{ all }1\leq l \leq L\text{ and all }\vec x\in B^\times(\hat r).\end{gather}
Finally, from (\ref{strong_directional_imply_strog_lemma_label_3}), (\ref{strong_directional_imply_strog_lemma_label_6}), and (\ref{strong_directional_imply_strog_lemma_label_5}) we get that 
\begin{gather}\label{strong_directional_imply_strog_lemma_label_11}p(\vec
x)=\sum\limits_{l=1}^{L}S_l(\vec
x)\cdot Q_l(\vec x)+F(\vec x)\text { for all }\vec x\in B^\times(\hat r).\end{gather}
Combining (\ref{strong_directional_imply_strog_lemma_label_9}), (\ref{strong_directional_imply_strog_lemma_label_10}) and (\ref{strong_directional_imply_strog_lemma_label_11}) we proved that $I$ strongly implies $p$.\end{proof}

\begin{lemma}\label{strong_implication_imply_implication_lemma}Let
$I\lhd\mathcal{P}_0^{m}(\mathbb{R}^n)$
be an ideal and $p\in\mathcal{P}_0^{m}(\mathbb{R}^n)$ be some polynomial. If $I$
strongly implies $p$ (as in Definition \ref{strong_implied_polynomial_definition}),
then $I$ implies $p$ (as in Definition \ref{new_def_implied_jet}). \end{lemma}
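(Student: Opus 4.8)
The plan is to recycle the data witnessing strong implication almost verbatim; the only genuine issue is that Definition \ref{strong_implied_polynomial_definition} provides a flat function $F$ that is controlled (as a negligible function) only on a conic neighborhood of $Allow(I)$, whereas Definition \ref{new_def_implied_jet} demands a flat function that is small on the \emph{whole} annulus $\text{Ann}_4(\rho)$. I would bridge this gap using Lemma \ref{lemma_euqiv_def_of_neg_fun}: since $F$ is negligible for $\Omega:=Allow(I)$, it is Whitney-negligible for $\Omega$, and the function $F_\epsilon\in C^m(\mathbb{R}^n\setminus\{\vec 0\})$ furnished by Definition \ref{def_whitney_negligible_functions} is globally small and agrees with $F$ on $\Gamma(\Omega,\delta,r)$ -- exactly what is needed. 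The polynomials $Q_1,\dots,Q_L$ and (after rescaling) the functions $S_1,\dots,S_L$ can simply be reused.

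In detail: let $r_0>0$, $Q_1,\dots,Q_L\in I$, $S_1,\dots,S_L\in C^m(B^\times(r_0))$, $C_1,\dots,C_L>0$ and $F\in C^m(B^\times(r_0))$ be as in Definition \ref{strong_implied_polynomial_definition}. I would prove that $I$ implies $p$ using the \emph{same} polynomials $Q_1,\dots,Q_L$ and the constant $A:=4^m\max_{1\le l\le L}C_l$. Given $\epsilon>0$, apply Lemma \ref{lemma_euqiv_def_of_neg_fun} and then Definition \ref{def_whitney_negligible_functions} with $\epsilon':=4^{-m}\epsilon$ in place of $\epsilon$ to obtain $\delta_0,r_1>0$ and $F_{\epsilon'}\in C^m(\mathbb{R}^n\setminus\{\vec 0\})$ with $\Gamma(\Omega,\delta_0,r_1)\subset B^\times(r_0)$, with $\abs{\partial^\alpha F_{\epsilon'}(\vec x)}\le\epsilon'\abs{\vec x}^{m-\abs{\alpha}}$ for all $\vec x\ne\vec 0$ and all $\abs{\alpha}\le m$, and with $F_{\epsilon'}=F$ on $\Gamma(\Omega,\delta_0,r_1)$. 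Then set $\delta:=\delta_0$ and $r:=\min\{r_0/8,\,r_1/2\}$, and for each $0<\rho\le r$ take $\tilde F:=F_{\epsilon'}|_{\text{Ann}_4(\rho)}$ and $\tilde S_l:=S_l|_{\text{Ann}_4(\rho)}$ (legitimate since $4\rho\le r_0/2<r_0$, so $\text{Ann}_4(\rho)\subset B^\times(r_0)$). Verifying (\ref{new_def_of_implied_label_1}) is then the one-line estimate $\abs{\partial^\alpha\tilde F(\vec x)}\le\epsilon'(4\rho)^{m-\abs{\alpha}}\le\epsilon\rho^{m-\abs{\alpha}}$ on $\text{Ann}_4(\rho)$; verifying (\ref{new_def_of_implied_label_2}) is the estimate $\abs{\partial^\alpha\tilde S_l(\vec x)}\le C_l\abs{\vec x}^{-\abs{\alpha}}\le C_l(\rho/4)^{-\abs{\alpha}}\le A\rho^{-\abs{\alpha}}$; and (\ref{new_def_of_implied_label_3}) follows because any $\vec x\in\text{Ann}_2(\rho)$ with $\text{dist}(\vec x/\abs{\vec x},\Omega)<\delta$ lies in $\Gamma(\Omega,\delta,r_1)$ (as $\abs{\vec x}<2\rho\le r_1$), so there $F_{\epsilon'}=F$, and the identity $p=\sum_l S_l Q_l+F$ of Definition \ref{strong_implied_polynomial_definition} (valid on $B^\times(r_0)$) gives $p=\sum_l\tilde S_l Q_l+\tilde F$.

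I do not expect a serious obstacle. The points to be careful about are: $A$ must be chosen independently of $\epsilon$ (it is, since the strong-implication data is fixed once and for all, and only the negligible term $F$ gets replaced per $\epsilon$); the radii must be shrunk enough that $\text{Ann}_4(\rho)\subset B^\times(r_0)$ and $\text{Ann}_2(\rho)\cap\{\vec x:\text{dist}(\vec x/\abs{\vec x},\Omega)<\delta\}\subset\Gamma(\Omega,\delta,r_1)$; and the degenerate case $\Omega=\emptyset$ should be noted separately (there $\Gamma(\Omega,\delta,r)=\emptyset$, Whitney-negligibility is vacuous so one may take $F_{\epsilon'}\equiv 0$, and (\ref{new_def_of_implied_label_3}) is vacuous as well). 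The one conceptual ingredient -- the only place the argument uses something nontrivial -- is the equivalence of negligible and Whitney-negligible functions (Lemma \ref{lemma_euqiv_def_of_neg_fun}), which upgrades the flat term from being small merely near the allowed directions to being small throughout the annulus.
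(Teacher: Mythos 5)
Your proposal is correct and follows essentially the same route as the paper's proof: invoke Lemma \ref{lemma_euqiv_def_of_neg_fun} to replace the negligible term $F$ by a globally small $F_\epsilon$ agreeing with $F$ on a conic neighborhood of $Allow(I)$, then restrict everything to the annuli and check the three conditions of Definition \ref{new_def_implied_jet}. The only cosmetic difference is that you track the harmless factor $4^{\pm m}$ explicitly (by pre-scaling $\epsilon$ and enlarging $A$) where the paper absorbs it into an unnamed constant.
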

\begin{proof}Assume $I$ strongly implies $p$ and denote $\Omega:=Allow(I)$ . Let $r_0>0$, $Q_1,\dots ,Q_L\in
I$, $ S_1,\dots ,S_L\in C^m(B^\times(r_0))$,
  $C_1,\dots ,C_L>0$ and $F\in C^m(B^\times(r_0))$
be such that (\ref{first_cond_of_strong_implied_poly_in_direction}), (\ref{second_cond_of_strong_implied_poly_in_direction}) and (\ref{third_cond_of_strong_implied_poly_in_direction}) hold. Setting $A:=\max\{C_1,\dots,C_L\}>0$ we have
\begin{gather}\label{strong_implication_imply_implication_lemma_label_1}F\text{
is negligible for }Allow(I);\end{gather}
\begin{gather}\label{strong_implication_imply_implication_lemma_label_2}\abs{\partial
^\alpha S_{l}(\vec x)}\leq A\abs{\vec x}^{-\abs{\alpha}}\text{ for all
}\abs{\alpha}\leq
m, \text{ all }1\leq l \leq L\text{ and all }\vec x\in B^\times(r_0);\end{gather}
\begin{gather}\label{strong_implication_imply_implication_lemma_label_3}p(\vec
x)=\sum\limits_{l=1}^{L}S_l(\vec
x)\cdot Q_l(\vec x)+F(\vec x)\text { for all }\vec x\in B^\times(r_0).\end{gather}
Fix $\epsilon>0$. By (\ref{strong_implication_imply_implication_lemma_label_1}), Definition \ref{def_whitney_negligible_functions} and Lemma \ref{lemma_euqiv_def_of_neg_fun} there exist $\delta,r>0$ (and without loss of generality $r<r_0$) and $F_\epsilon\in C^m(\mathbb{R}^n\setminus\{\vec 0\})$ such that
\begin{gather}\label{strong_implication_imply_implication_lemma_label_4}\abs{\partial^\alpha
F_\epsilon(\vec x)}\leq\epsilon\abs{\vec x}^{m-\abs{\alpha}}\text { for all
}\vec x\in\mathbb{R}^n\setminus\{\vec0\}\text{ and all }\abs{\alpha}\leq
m;\end{gather}
\begin{gather}\label{strong_implication_imply_implication_lemma_label_5}F_\epsilon(\vec x)=F(\vec
x)\text { for all
}\vec x\in\Gamma(\Omega,\delta,4r).\end{gather}
Let $0<\rho\leq r$. From (\ref{strong_implication_imply_implication_lemma_label_2}) and (\ref{strong_implication_imply_implication_lemma_label_4}) we have that for some constant $C_1>0$ (depending only on $m$ and $n$) the following hold 
\begin{gather}\label{strong_implication_imply_implication_lemma_label_6}\abs{\partial
^\alpha S_{l}(\vec x)}\leq C_{1}A\rho^{-\abs{\alpha}}\text { for all
}\vec x\in\text{Ann}_4(\rho)\text{ and all }\abs{\alpha}\leq
m;\end{gather}
\begin{gather}\label{strong_implication_imply_implication_lemma_label_7}\abs{\partial^\alpha
F_\epsilon(\vec x)}\leq C_{1}\epsilon\rho^{m-\abs{\alpha}}\text { for all
}\vec x\in\text{Ann}_4(\rho)\text{ and all }\abs{\alpha}\leq
m.\end{gather}
Finally, from (\ref{strong_implication_imply_implication_lemma_label_3}) and (\ref{strong_implication_imply_implication_lemma_label_5}) we immediately get that
\begin{multline}\label{strong_implication_imply_implication_lemma_label_8}p(\vec x)=F_\epsilon(\vec x)+S_1(\vec x)Q_1(\vec x)+S_2(\vec
x)Q_2(\vec x)+\dots+S_L(\vec
x)Q_L(\vec x) \\ \text {for all }\vec x\in\text{Ann}_2(\rho)
\text{ such that }\text{dist}(\frac{\vec x}{\abs{\vec x}},\Omega)<\delta.
\end{multline}
We thus showed that there exists a constant $A>0$ such that given $\epsilon>0$ there exists $\delta,r>0$ such that for any $0<\rho\leq r$ there exist functions $F_\epsilon,S_1,\dots,S_L\in C^m(\text{Ann}_4(\rho))$ such that (\ref{strong_implication_imply_implication_lemma_label_6}) (\ref{strong_implication_imply_implication_lemma_label_7}) and (\ref{strong_implication_imply_implication_lemma_label_8}) hold. That is, we showed that $I$ implies $p$.\end{proof}

\begin{remark}We do not know whether the converse of Lemma \ref{strong_implication_imply_implication_lemma} holds, i.e.: let
$I\lhd\mathcal{P}_0^{m}(\mathbb{R}^n)$
be an ideal and $p\in\mathcal{P}_0^{m}(\mathbb{R}^n)$ be some polynomial.  Assume that $I$ implies $p$ (as in Definition \ref{new_def_implied_jet}). Is it always true that  $I$ strongly implies $p$ (as in Definition \ref{strong_implied_polynomial_definition})?\end{remark}

\begin{corollary}\label{cor_strong_directional_implication_imply_implication}Let
$I\lhd\mathcal{P}_0^{m}(\mathbb{R}^n)$
be an ideal and $p\in\mathcal{P}_0^{m}(\mathbb{R}^n)$ be some polynomial. If $I$
strongly implies $p$ in the direction $\omega$ for any $\omega\in Allow(I)$ (as in Definition \ref{implied_polynomial_definition}),
then $I$ implies $p$ (as in Definition \ref{new_def_implied_jet}).\end{corollary}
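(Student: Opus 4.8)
The plan is to deduce this immediately by chaining together the two preceding lemmas, so the proof is essentially a one-liner. First I would invoke Lemma \ref{strong_directional_imply_strog_lemma}: the hypothesis is exactly that $I$ strongly implies $p$ in the direction $\omega$ for every $\omega\in Allow(I)$, and that lemma upgrades this to the statement that $I$ strongly implies $p$ in the sense of Definition \ref{strong_implied_polynomial_definition} (i.e.\ with data $Q_1,\dots,Q_L$, $S_1,\dots,S_L$, $F$ defined on a single punctured ball $B^\times(r_0)$, with $F$ negligible for all of $Allow(I)$). Then I would apply Lemma \ref{strong_implication_imply_implication_lemma}, which says precisely that strong implication (Definition \ref{strong_implied_polynomial_definition}) entails implication (Definition \ref{new_def_implied_jet}). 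Composing the two implications gives the desired conclusion.

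There is no genuine obstacle left to overcome at this point, since all the substantive work has already been carried out in the two lemmas being cited. Lemma \ref{strong_directional_imply_strog_lemma} did the direction-by-direction patching over a finite subcover of $S^{n-1}$, using Remark \ref{remark_strong_implication_holds_trivially_in_forb_directions} to dispense with forbidden directions and Lemma \ref{patching_negligible_functions_lemma} to glue the negligible error terms across the cover; Lemma \ref{strong_implication_imply_implication_lemma} then converted the $\abs{\vec x}$-scale bounds of strong implication into the $\rho$-scale bounds of Definition \ref{new_def_implied_jet}, via the equivalence of negligibility and Whitney-negligibility (Lemma \ref{lemma_euqiv_def_of_neg_fun}). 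Hence the corollary follows at once, and the only thing to write is the two-step reduction above.
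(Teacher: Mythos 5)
Your proposal is correct and is exactly the paper's own proof: the corollary is obtained by applying Lemma \ref{strong_directional_imply_strog_lemma} and then Lemma \ref{strong_implication_imply_implication_lemma}. Nothing further is needed.
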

\begin{proof}It follows immediately from Lemma \ref{strong_directional_imply_strog_lemma} and Lemma \ref{strong_implication_imply_implication_lemma}.\end{proof}

\begin{example}\label{amazing_exaple_for_negligible_part2}Set $n=3$, $m=2$
and $(x,y,z)$ a standard coordinate system on $\mathbb{R}^3$. Let $I\lhd\mathcal{P}_0^2(\mathbb{R}^3)$
be such that $x^2,y^2-xz\in I$. Then, $xy\in cl(I)$. In particular $\langle x^2,y^2-xz\rangle_2$
is not closed.

Indeed, by Corollary \ref{old_new_cor_on_how_to_calc_allow_with_inclusion}
we have $Allow(I)\subset\{(0,0,\pm1)\}$. By Example \ref{amazing_exaple_for_negligible_part1} the function $F=\frac{y^3}{z}$ is negligible for $\Omega\cap D(\omega,10^{-3})$, for any $\omega\in Allow(I)$. So by
Definition \ref{implied_polynomial_definition}, setting $L=1$, $S_1=-\frac{y}{z}$
and $Q_1=y^2-xz$ we have that $I$ strongly implies the jet $$xy=S_1\cdot Q_1+ F=(-\frac{y}{z})\cdot(y^2-xz)+\frac{y^3}{z}$$ in the direction
$\omega$, for any $\omega\in Allow(I)$. By Corollary \ref{cor_strong_directional_implication_imply_implication} $I$ implies $xy$, i.e., $xy\in cl(I)$. \end{example}

\section{An algorithm to calculate the closure of an ideal}\label{section_algorithm}

\subsection{Background on bundles}

We provide the necessary background on \textit{bundles}, introduced and studied
in (\cite{F1,FL1}). The notation in this section is slightly different
from the rest of this paper, as we adopt here the standard notation in the
literature when such bundles are used. Thus, we start by fixing notation.

\begin{notation}Fix natural numbers $m,n,\mathcal{D}\geq1$. We write $J_{\vec
x}(F)$ (or $J_{\vec x}^mF$) to denote the $m^{th}$ degree Taylor polynomial of a real
valued function $F\in C^m(U)$, when $U$ is an open neighborhood  of $\vec
x$. The vector space of all such polynomials is denoted by $\mathcal{P}$
(note that as a finite dimensional real vector space $\mathcal{P}$ can be
identified with $\mathbb{R}^{\mathcal{D}^*}$ for some $\mathcal{D}^*\in\mathbb{N}$,
so we can talk about semi-algebraic subsets of $\mathcal{P}$; see Footnote \ref{semialgebraic_footnote}). The ring of
$m$-jets (of real valued functions) at $\vec x\in\mathbb{R}^n$ is the vector
space $\mathcal{P}$, with multiplication $P\odot_{\vec x} Q=J_{\vec x}(PQ)$.
Then, for two $C^m$ functions around $\vec x$ we have $J_{\vec x}(FG)=J_{\vec
x}(F)\odot_{\vec x}J_{\vec x}(G)$. We write $\mathcal{R}_{\vec x}=(\mathcal{P},\odot_{\vec
x})$ to denote the ring of $m$-jets at $\vec x$.

The $m$-jet of a ($C^m$) $\mathbb{R}^{\mathcal{D}}$-valued function $\vec
F=(F_1,\dots, F_\mathcal{D})$ at $\vec x\in\mathbb{R}^n$ is defined to be
$J_{\vec x}^m(\vec F)=(J_{\vec x}^mF_1,\dots, J_{\vec x}^mF_\mathcal{D})\in\mathcal{P}^\mathcal{D}$.
The multiplication $Q\odot_{\vec x}(P_1,\dots,P_\mathcal{D}):=(Q\odot_{\vec x}P_1,\dots,Q\odot_{\vec
x}P_\mathcal{D})$ for $Q,P_1,\dots,P_\mathcal{D}\in\mathcal{P}$ turns $\mathcal{P}^\mathcal{D}$
into a $\mathcal{R}_{\vec x}$ module, which we denote by $\mathcal{R}_{\vec
x}^{\mathcal{D}}$. We will examine $\mathcal{R}_{\vec x}$ submodules of
$\mathcal{R}_{\vec x}^\mathcal{D}$, and we adopt the unusual convention that
$\{0\},\mathcal{R}_{\vec x}^\mathcal{D}$ and the empty set are all allowed
as submodules of $\mathcal{R}_{\vec x}^\mathcal{D}$.\end{notation} 

\textbf{Warning.} Before we proceed we stress that this section deals with
ideals
in $\mathcal{P}_0^{m}(\mathbb{R}^n)$
and how to calculate their closures. At no point in this section do we make
any assumption on an ideal $I\lhd \mathcal{P}_0^{m}(\mathbb{R}^n)$ being of the
form $I^m(E)$ for some closed subset $E\subset \mathbb{R}^n$ containing the
origin, nor do we show any ideal is of this form. The sets denoted $E$ below
play the role of the base space for our
bundles, and have nothing to do with closed subsets $E\subset\mathbb{R}^n$
containing the
origin that we studied in previous sections of this paper. We use the
same letter to stay in line with the standard notation in the literature.

\begin{definition}[bundles and sections]Let $E\subset\mathbb{R}^n$ be compact.
A \textit{bundle} (over $E$) is a family 
\begin{gather}\label{BB_label_1}\mathcal{H}=\{H_{\vec x}\}_{\vec x\in E}\text{
};\text{ for any }\vec x\in E, H_{\vec x}\text{ is a translate of an }\mathcal{R}_{\vec
x}\text{ submodule of }\mathcal{R}^\mathcal{D}_{\vec x}.\end{gather}
We call $H_{\vec x}$ the \textit{fiber} of $\mathcal{H}$ over $\vec x$. The
bundle (\ref{BB_label_1}) is called \textit{semi-algebraic} if the set $$\{(\vec
x,\vec P)\in\mathbb{R}^n\times\mathcal{P}^\mathcal{D}:\vec x\in E\text{ and
}\vec P\in H_{\vec {x}}\}\subset \mathbb{R}^n\times\mathcal{P}^\mathcal{D}$$
is semi-algebraic. When we say that we are \textit{given} (respectively \textit{compute})
a bundle, we mean that we are given (resp. compute) this set\footnote{For detailed discussions of the notion of "computing a set" see \cite[Section 2.5]{FL2} or more generally \cite{BaCR}.}.

A \textit{section} of the bundle (\ref{BB_label_1}) is a ($C^m$) $\mathbb{R}^{\mathcal{D}}$-valued
function $\vec F=(F_1,\dots, F_D)$ defined on $\mathbb{R}^n$, such that $J_{\vec
x}\vec F\in H_{\vec x}$ for any $\vec x\in E$. The 
$C^m$ norm of a section, denoted $\abs{\abs{\vec
F}}_{C^m}$, is defined by$$\abs{\abs{\vec
F}}_{C^m}:=\sup \big\{\abs{\partial^\alpha
F_i(\vec x)}\big\}_{1\leq i\leq \mathcal{D},\text{ }\abs{\alpha}\leq m,\text{
}\vec x\in \mathbb{R}^n}\in\mathbb{R}_{\geq0}\cup\{\infty\}.$$

For any $k\in\mathbb{N}$, the \textit{$k$-norm} of the bundle (\ref{BB_label_1}),
denoted $\abs{\abs{\mathcal{H}}}_k$, is infimum over all $\mathbb{R}\ni M>0$
for which the following holds:\begin{multline}\label{BB_label_2}\text{For
any }\vec x_1,\dots,\vec x_k\in E \text{ there exist }P_1\in H_{\vec x_1},\dots,P_k\in
H_{\vec x_k}\text{ such that} \\ \abs{\partial^\alpha P_i(\vec
x_i)}\leq M\text{ for all }1\leq i\leq
k \text{ and all }\abs{\alpha}\leq m\text{ }\text{ }\text{ }\text{ }\text{
}\text{ }\text{ }\text{ and, for }\vec x_i\neq \vec x_j, \\ \abs{\partial^\alpha(P_i-P_j)(\vec x_i)}\leq
M\abs{x_i-x_j}^{m-\abs{\alpha}}\text{ for all }\abs{\alpha}\leq m. \end{multline}
If no such $M$ exists we set $\abs{\abs{\mathcal{H}}}_k=\infty$.\end{definition}

Given a bundle $\mathcal{H}$, we would like to know whether it has a section,
and moreover if a section exists, we want to know how small can we take its
$C^m$ norm. By our convention, $\mathcal{H}$ may have empty fibers, in which
case clearly a section does not exist. To answer these questions \cite{F1,FL1} proved the existence and studied the properties of the \textit{stable
Glaeser refinement} of a bundle. We refer the reader to \cite{F1,FL1} for the definition and study of these stable refinements, and list here
only the properties we will use: given a bundle $\mathcal{H}=\{H_{\vec x}\}_{\vec
x\in E}$ the stable Glaeser refinement of $\mathcal{H}$ is another bundle
$\tilde {\mathcal{H}}=\{\tilde H_{\vec x}\}_{\vec x\in E}$. In particular
it satisfies: 

\begin{theorem}\label{theorem_on_bundles}There exist an integer constant
$k^\#\geq1$ and positive constants $c,C>0$, all three depending only on $m,n$
and $\mathcal{D}$, such that the following holds: let $\mathcal{H}=\{H_{\vec
x}\}_{\vec x\in E}\text{
}$ be a bundle, and let $\tilde {\mathcal{H}}=\{\tilde
H_{\vec x}\}_{\vec x\in E}$ be its stable Glaeser refinement. Then:
\begin{gather}\label{BB_label_3}\mathcal{H}\text{ has a section if and only
if }\tilde{\mathcal{H}}_{\vec x}\text{ is non empty for any }\vec x\in E;\end{gather}
\begin{gather}\label{BB_label_4}\text{if }\mathcal{H}\text{ has a section
then }c\abs{\abs{\mathcal{\tilde{H}}}}_{k^\#}\leq\inf\big\{\abs{\abs{\vec
F}}_{C^m}:\vec F\text{ is a section of }\mathcal{H}\big\}\leq C\abs{\abs{\mathcal{\tilde{H}}}}_{k^\#};\end{gather}
\begin{gather}\label{BB_label_5}\text{if }\mathcal{H}\text{ is semi-algebraic,
then so is }\tilde{\mathcal{H}},\text{ and moreover }\tilde{\mathcal{H}}\text{
can be computed if }\mathcal{H}\text{ is given}.\end{gather}\end{theorem}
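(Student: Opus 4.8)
The plan is to assemble Theorem \ref{theorem_on_bundles} out of the theory of Glaeser refinements and the $C^m$ finiteness principle developed in \cite{F1} and \cite{FL1}; I do not expect to prove anything genuinely new, only to package those results in the form stated. First I would recall the \emph{one-step Glaeser refinement} operator: given a bundle $\mathcal{H}=\{H_{\vec x}\}_{\vec x\in E}$, one produces a new bundle $\mathcal{H}'=\{H'_{\vec x}\}_{\vec x\in E}$ by declaring that $P\in H'_{\vec x}$ exactly when $P$ can be approximated, with arbitrarily small Whitney constants, by jets chosen from the fibers over a cluster of $k_0=k_0(m,n,\mathcal{D})$ nearby base points --- the precise $\epsilon$--$\delta$ definition being the one in \cite{FL1}. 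One checks directly that $H'_{\vec x}$ is again a translate of an $\mathcal{R}_{\vec x}$-submodule of $\mathcal{R}^{\mathcal{D}}_{\vec x}$ and that $H'_{\vec x}\subseteq H_{\vec x}$.

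Next I would iterate: $\mathcal{H}\supseteq\mathcal{H}'\supseteq\mathcal{H}''\supseteq\cdots$. Since each fiber is a coset of a submodule of the fixed finite-dimensional space $\mathcal{P}^{\mathcal{D}}$, at every base point the associated submodule either strictly shrinks or the fiber becomes empty under one refinement step, so a dimension count forces the sequence to stabilize after at most $N=N(m,n,\mathcal{D})$ steps; the stabilized bundle is $\tilde{\mathcal{H}}$, and it satisfies the self-consistency property $\tilde{\mathcal{H}}'=\tilde{\mathcal{H}}$. The equivalence (\ref{BB_label_3}) then follows: if every fiber of $\tilde{\mathcal{H}}$ is non-empty, a Whitney-type gluing of the local data supplied by the stability of $\tilde{\mathcal{H}}$ produces a section; conversely, the jet at each point of any section of $\mathcal{H}$ lies in every Glaeser refinement, hence in $\tilde{\mathcal{H}}$, so $\tilde{\mathcal{H}}$ has non-empty fibers.

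For the quantitative statement (\ref{BB_label_4}) the key input is the $C^m$ finiteness principle: there is a constant $k^\#=k^\#(m,n,\mathcal{D})$ so that a Whitney field on $E$ whose $k^\#$-point data are uniformly bounded in the sense of (\ref{BB_label_2}) extends to a $C^m$ function on $\mathbb{R}^n$ of comparable norm. The lower bound in (\ref{BB_label_4}) is immediate --- restrict any section to $k^\#$ points of $E$ to see that $||\tilde{\mathcal{H}}||_{k^\#}$ is at most a constant times the $C^m$ norm of the section --- whereas the upper bound is the deep direction and is exactly the content of the main extension theorems of \cite{F1,FL1}, ultimately resting on Fefferman's solution of the $C^m$ Whitney extension problem, with its Calder\'on--Zygmund decompositions, cluster/tree combinatorics and careful patching. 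This is the step I expect to be the real obstacle, and I would cite it rather than reproduce it.

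Finally, for (\ref{BB_label_5}) I would observe that the one-step refinement is defined by a first-order formula in the language of ordered fields: it quantifies over the auxiliary reals $\epsilon,\delta$, over the $k_0$ nearby base points, and over the candidate jets, subject only to polynomial equalities and inequalities among Taylor coefficients. Hence Tarski--Seidenberg quantifier elimination shows $\mathcal{H}'$ is semi-algebraic whenever $\mathcal{H}$ is, and iterating $N$ times gives semi-algebraicity of $\tilde{\mathcal{H}}$; effective quantifier elimination makes the construction algorithmic, so $\tilde{\mathcal{H}}$ can be computed from $\mathcal{H}$ in the sense of \cite{BaCR}. All of this is carried out in detail in \cite{F1,FL1,FL2}, so the proof amounts to citing those sources in the form stated above.
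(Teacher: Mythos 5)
Your proposal is correct and follows essentially the same route as the paper: the paper likewise treats (\ref{BB_label_3}) and (\ref{BB_label_4}) as citations to \cite{F1} (scalar case) and \cite{FL1} (the reduction of the vector-valued case to the scalar one), and establishes (\ref{BB_label_5}) by observing that each Glaeser refinement step is first-order definable over the reals, so that finitely many iterations preserve semi-algebraicity and computability via quantifier elimination. The extra expository detail you supply on the one-step refinement and the dimension-count stabilization is consistent with the cited sources and does not change the argument.
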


The scalar case ($\mathcal{D}=1$) of (\ref{BB_label_3}) and (\ref{BB_label_4})
are proven in \cite{F1}. The general case ($\mathcal{D}\geq1$)
of (\ref{BB_label_3}) is proven in \cite{FL1} by reduction to the
scalar case. The same reduction can be used to prove the general case of
(\ref{BB_label_4}). The proof of (\ref{BB_label_5}) is a routine application
of standard properties of semi-algebraic sets\footnote{More precisely, the
stable Glaeser refinement is constructed by iterating finitely many times
the process of \textit{Glaeser refinement} (until this process stabilizes,
hence the term "stable"). Each iteration is defined by formulae that are
first order definable in the language of real fields with parameters from
$\mathbb{R}$, and so each iteration preserves the property of the bundle
being semi-algebraic. We refer the reader to \cite{F1} for further details on the
iterated refinement process, and to \cite{vdD} for exposition on the model theoretic
notion of definability in first order languages.}.
\begin{definition}The \textit{norm} of the bundle (\ref{BB_label_1}), denoted
$\abs{\abs{\mathcal{H}}}$, is defined to be the $k^{\#}$-norm of (\ref{BB_label_1}),
where $k^\#$ is the integer constant from Theorem \ref{theorem_on_bundles}
(formally this $k^\#$ is not unique, so we choose and fix one such $k^\#$).\end{definition}
Taylor's Theorem easily implies that there exists a constant $C'$, depending
only on $m,n$ and $\mathcal{D}$, such that $$\abs{\abs{\mathcal{H}}}\leq
C'\abs{\abs{\vec F}}_{C^m}\text{ for any section }\vec F\text{ of }\mathcal{H}.$$

\begin{definition}[parametrized bundles]Let $\hat E\subset\mathbb{R}^S$ be
any set and let $E\subset\mathbb{R}^n$ be a compact set. We call a point
$\xi\in\hat E$ a parameter. For each $\xi\in\hat E$ let $\mathcal{H}^\xi=\{H^\xi_{\vec
x}\}_{\vec x\in E}$ be bundle over $E$. The family  $\mathcal{H}=\{H^\xi_{\vec
x}\}^{\xi\in\hat E}_{\vec x\in
E}$  is called a \textit{parametrized bundle}. The \textit{stable Glaeser
refinement} of $\mathcal{H}$ is the parametrized bundle $\tilde{\mathcal{H}}=\{\tilde
H^\xi_{\vec x}\}^{\xi\in\hat
E}_{\vec x\in
E}$, where for each fixed $\xi\in\hat E$, $\{\tilde
H^\xi_{\vec x}\}_{\vec x\in
E}$ is the stable Glaeser refinement of $\mathcal{H}^\xi$.

A parametrized
bundle $\mathcal{H}=\{H^\xi_{\vec x}\}^{\xi\in\hat
E}_{\vec x\in
E}$ is called \textit{semi-algebraic} if the set $$\{(\xi,\vec
x,\vec P)\in\mathbb{R}^S\times\mathbb{R}^n\times\mathcal{P}^\mathcal{D}:\xi\in\hat
E, \vec x\in E\text{ and
}\vec P\in H^\xi_{\vec {x}}\}\subset \mathbb{R}^S\times \mathbb{R}^n\times\mathcal{P}^\mathcal{D}$$
is semi-algebraic. When we say that we are \textit{given} (respectively \textit{compute})
a parametrized bundle, we mean that we are given (resp. compute) this set.\end{definition}

The following theorem is a generalization of (\ref{BB_label_5}), and is (again)
a routine application
of standard properties of semi-algebraic sets. 

\begin{theorem}\label{thm_on_glaeser_ref_of_parameter_bund}Let $\mathcal{H}$
 be a semi-algebraic parametrized bundle, and let  $\tilde{\mathcal{H}}$
be its stable Glaeser refinement. Then, $\tilde{\mathcal{H}}$ is semi-algebraic
as well, and moreover  $\tilde{\mathcal{H}}$ can be computed if $\mathcal{H}$
 is given. \end{theorem}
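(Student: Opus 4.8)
The plan is to reduce Theorem \ref{thm_on_glaeser_ref_of_parameter_bund} to the unparametrized statement \eqref{BB_label_5} of Theorem \ref{theorem_on_bundles} by viewing the parameter $\xi\in\hat E\subset\mathbb{R}^S$ as an extra batch of ``frozen'' coordinates. First I would recall that the stable Glaeser refinement is obtained by iterating the basic Glaeser refinement finitely many times until it stabilizes, and that a single Glaeser refinement step is definable by a first-order formula in the language of ordered fields with parameters from $\mathbb{R}$ (this is exactly the content of the footnote attached to \eqref{BB_label_5}). The key observation is that \emph{the definition of the Glaeser refinement operates fiberwise in $\xi$}: the refined fiber $\tilde H^\xi_{\vec x}$ depends only on the bundle $\mathcal{H}^\xi=\{H^\xi_{\vec y}\}_{\vec y\in E}$ for that fixed $\xi$, via the same universally quantified/existentially quantified conditions on nearby fibers and limits thereof. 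Hence the same defining formula, now read with $\xi$ as a free variable ranging over $\hat E$ rather than as a hidden parameter, describes the parametrized refinement.

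Concretely, the key steps in order: (1) Write out the set $Z:=\{(\xi,\vec x,\vec P):\xi\in\hat E,\ \vec x\in E,\ \vec P\in H^\xi_{\vec x}\}$, which is semi-algebraic by hypothesis; the goal is to show $\tilde Z:=\{(\xi,\vec x,\vec P):\xi\in\hat E,\ \vec x\in E,\ \vec P\in\tilde H^\xi_{\vec x}\}$ is semi-algebraic and computable from $Z$. (2) Describe one Glaeser refinement step as a quantifier-bounded first-order operation: the predicate ``$\vec P\in(G\mathcal{H}^\xi)_{\vec x}$'' (one step of refinement applied to the fiber over $\vec x$ for parameter $\xi$) is expressible using finitely many quantifiers over $E$ and over $\mathcal{P}^{\mathcal D}$, with all data coming from membership in $Z$; since semi-algebraic sets are closed under finite unions, intersections, complements, projections (Tarski--Seidenberg), and taking closures, the set $\{(\xi,\vec x,\vec P):\vec P\in(G\mathcal{H}^\xi)_{\vec x}\}$ is semi-algebraic and computable from $Z$. (3) Note that the number of iterations needed to stabilize is bounded by a constant depending only on $m,n,\mathcal D$ (this is part of the cited construction of the \emph{stable} refinement, independent of $\xi$), so iterating step (2) that bounded number of times produces $\tilde Z$ and shows it is semi-algebraic and computable. (4) Conclude the theorem, leaving the purely bookkeeping verification of the first-order formulas to the reader, exactly as \eqref{BB_label_5} does in the unparametrized case.

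The main obstacle — really the only genuine point requiring care — is step (3): one must be sure that the stabilization of the Glaeser refinement process is uniform in $\xi$, i.e. that there is a single number $N=N(m,n,\mathcal D)$ of iterations after which \emph{every} fiber $\{H^\xi_{\vec x}\}_{\vec x\in E}$ has stabilized. This is not automatic from the definition of a single refinement step, but it follows from the quantitative form of the stabilization result in \cite{F1,FL1}, where the bound on the number of iterations depends only on the dimension data $m,n,\mathcal D$ (essentially because each non-stabilized step strictly drops a dimension of a fiber, and fiber dimensions are bounded by $\mathcal D\cdot\dim\mathcal P$, uniformly in the base point and hence in $\xi$). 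Once this uniform bound is in hand, the rest is a mechanical transfer of the first-order definability argument underlying \eqref{BB_label_5}, so the proof is short.

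\begin{proof}The stable Glaeser refinement of a bundle over a compact $E\subset\mathbb{R}^n$ is obtained by iterating the elementary Glaeser refinement operation a number of times bounded by a constant $N=N(m,n,\mathcal{D})$ depending only on $m,n,\mathcal{D}$ (see \cite{F1,FL1}; the bound is uniform because each non-trivial iteration strictly decreases the dimension of some fiber, and fiber dimensions are bounded by $\mathcal{D}\cdot\dim\mathcal{P}$ independently of the base point). For a parametrized bundle $\mathcal{H}=\{H^\xi_{\vec x}\}^{\xi\in\hat E}_{\vec x\in E}$, the refinement acts separately for each fixed $\xi$, so $\tilde{\mathcal{H}}$ is obtained by applying the same $N$ elementary refinement steps, fiberwise in $\xi$.

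It therefore suffices to show that a single elementary Glaeser refinement step preserves semi-algebraicity of a parametrized bundle and is computable. Let $Z=\{(\xi,\vec x,\vec P)\in\mathbb{R}^S\times\mathbb{R}^n\times\mathcal{P}^\mathcal{D}:\xi\in\hat E,\vec x\in E,\vec P\in H^\xi_{\vec x}\}$, which is semi-algebraic by hypothesis. One elementary Glaeser refinement of the fiber over $\vec x$ (for the parameter $\xi$) is defined by a condition of the form: $\vec P$ lies in the fiber over $\vec x$, and for every choice of finitely many nearby base points there exist elements of the corresponding fibers satisfying finitely many polynomial inequalities expressing the Glaeser compatibility (together with a closure operation). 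All the data entering this condition — membership in fibers, the polynomial inequalities, belonging to $E$ — are first-order definable over the real field with parameters from $\mathbb{R}$, using only the set $Z$. By the Tarski--Seidenberg theorem and closure of semi-algebraic sets under finite Boolean operations, projections, and topological closure, the resulting set $\{(\xi,\vec x,\vec P):\vec P\in (G\mathcal{H}^\xi)_{\vec x}\}$ is semi-algebraic, and it is computable from $Z$ by the standard effective version of these operations (cf. \cite{BCR,BaCR}).

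Iterating this $N$ times, we conclude that $\tilde{\mathcal{H}}$ is semi-algebraic and can be computed from $\mathcal{H}$. The verification that the elementary refinement step is indeed captured by the first-order formulas described above is routine and identical to the unparametrized case treated in \eqref{BB_label_5}, so we omit it.\end{proof}
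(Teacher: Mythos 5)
Your proposal is correct and follows essentially the same route as the paper, which likewise treats this as a routine consequence of the fact that each Glaeser refinement step is first-order definable over the real field (with $\xi$ now a free variable) and that the iteration stabilizes after a number of steps bounded by a constant depending only on $m,n,\mathcal{D}$. Your explicit attention to the uniformity of that stabilization bound in $\xi$ is a point the paper leaves implicit in its citation of \cite{F1,FL1}, but it is the correct justification.
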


\subsection{The algorithm}

For the remainder of this section we fix the following: let $I\lhd\mathcal{P}_0^{m}(\mathbb{R}^n)$
be an ideal, let $Q_1,\dots, Q_L$ be a basis of $I$ (as a vector space),
denote $\Omega:=Allow(I)$ and let $p\in\mathcal{P}_0^{m}(\mathbb{R}^n)$ be some
polynomial. We further assume that $\Omega\neq\emptyset$, as if $\Omega=\emptyset$
then we already know by Corollary \ref{cor_no_allowed_directions} that any
jet in $\mathcal{P}_0^{m}(\mathbb{R}^n)$ is implied by $I$, and so $cl(I)=\mathcal{P}_0^{m}(\mathbb{R}^n)$.

\

\begin{definition}\label{def_of_condition_0}We say that condition $\mathcal{C}(A,\epsilon,\delta,r,\rho;p,Q_1,\dots,Q_L)$
holds if there exist functions $F,S_1,S_2,\dots,S_L\in C^m(\text{Ann}_4(\rho))$
satisfying (\ref{new_def_of_implied_label_1}), (\ref{new_def_of_implied_label_2})
and (\ref{new_def_of_implied_label_3}).\end{definition} Definition \ref{def_of_condition_0}
immediately implies the following lemma:

\begin{lemma}\label{lemma_on_conditions_number_1}$I$ implies $p$ if and only
if there exists $A>0$ such that for any $\epsilon>0$
there exist $\delta,r>0$ for which, for all $\rho\in(0,r]$, condition $\mathcal{C}(A,\epsilon,\delta,r,\rho;p,Q_1,\dots,Q_L)$
holds.\end{lemma}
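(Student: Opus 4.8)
The plan is to read the statement off directly from Definition \ref{new_def_implied_jet} and Definition \ref{def_of_condition_0}. Indeed, the two sides of the asserted equivalence are very nearly verbatim reformulations of one another: Definition \ref{new_def_implied_jet} says precisely that there exist $A>0$ \emph{and} jets $Q_1,\dots,Q_L\in I$ such that for every $\epsilon>0$ there are $\delta,r>0$ with condition $\mathcal{C}$ holding for all $\rho\in(0,r]$. The only point that needs an argument is that, in this statement, the a priori arbitrary generators $Q_1,\dots,Q_L\in I$ may be replaced once and for all by the fixed vector-space basis $Q_1,\dots,Q_L$ of $I$ chosen for this section.

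For the ``if'' direction I would simply observe that the fixed basis elements lie in $I$, so an $A>0$ witnessing the right-hand condition is already a witness for Definition \ref{new_def_implied_jet} (with $L=\dim_{\mathbb{R}} I$ and the $Q_i$ the fixed basis); hence $I$ implies $p$. For the ``only if'' direction, I would start from a witness $A'>0$, $Q_1',\dots,Q_{L'}'\in I$ of Definition \ref{new_def_implied_jet}, expand each $Q_j'$ in the fixed basis as $Q_j'=\sum_{i=1}^{L}c_{ij}Q_i$ with real constants $c_{ij}$, and then, given $\epsilon>0$ and the corresponding $\delta,r>0$ together with (for each $\rho\in(0,r]$) the functions $F,S_1',\dots,S_{L'}'$ on $\text{Ann}_4(\rho)$, set $S_i:=\sum_{j=1}^{L'}c_{ij}S_j'$. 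Regrouping the finite sum of real numbers gives $\sum_{i=1}^{L}S_i(\vec x)Q_i(\vec x)=\sum_{j=1}^{L'}S_j'(\vec x)Q_j'(\vec x)$ at every $\vec x$, so (\ref{new_def_of_implied_label_3}) survives with the fixed $Q_i$; condition (\ref{new_def_of_implied_label_1}) is untouched; and $\abs{\partial^\alpha S_i(\vec x)}\leq(\sum_{j=1}^{L'}\abs{c_{ij}})\,A'\,\rho^{-\abs{\alpha}}$, so (\ref{new_def_of_implied_label_2}) holds with the new constant $A:=A'\cdot\max_{1\leq i\leq L}\sum_{j=1}^{L'}\abs{c_{ij}}$. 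Thus $\mathcal{C}(A,\epsilon,\delta,r,\rho;p,Q_1,\dots,Q_L)$ holds for the fixed basis, for the same $\delta,r$ and all $\rho\in(0,r]$.

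The only subtlety I would flag is the order of the quantifiers: in the right-hand condition $A$ is chosen before $\epsilon$, so I must make sure the replacement constant $A$ does not depend on $\epsilon$. It does not --- it is built purely from $A'$ and the fixed change-of-basis matrix $(c_{ij})$ --- and the new functions $S_i$ are finite real-linear combinations of the $S_j'$, hence still $C^m$ on $\text{Ann}_4(\rho)$, with the derivative bound coming from the triangle inequality. I therefore do not expect any genuine obstacle; once the basis change is performed the lemma is immediate, which is exactly why the statement asserts that it follows immediately from the definitions.
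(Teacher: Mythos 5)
Your proof is correct and takes essentially the same route as the paper, which simply states that the lemma follows immediately from Definition \ref{def_of_condition_0}. The one point you elaborate --- replacing the arbitrary $Q_1',\dots,Q_{L'}'\in I$ of Definition \ref{new_def_implied_jet} by the fixed basis via a change-of-basis applied to the $S_j'$, with the new constant $A$ independent of $\epsilon$ --- is exactly the detail the paper leaves implicit, and you handle it correctly.
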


Scaling $F,S_1,S_2,\dots,S_L\in C^m(\text{Ann}_4(\rho))$
that satisfy $\mathcal{C}(A,\epsilon,\delta,r,\rho;p,Q_1,\dots,Q_L)$ by setting
$$\tilde F(\vec x):=\epsilon^{-1}\rho^{-m}F(\rho\vec x)\text{ and }\tilde
S_{l}(\vec x):=A^{-1}S_{l}(\rho\vec x)\text{ for any }1\leq l\leq L,$$ one
easily sees that condition    $\mathcal{C}(A,\epsilon,\delta,r,\rho;p,Q_1,\dots,Q_L)$
is equivalent to the following condition  $\mathcal{C}^{*}(A,\epsilon,\delta,r,\rho;p,Q_1,\dots,Q_L)$:

\begin{definition}We say that condition $\mathcal{C}^{*}(A,\epsilon,\delta,r,\rho;p,Q_1,\dots,Q_L)$
holds if there exist functions $\tilde F,\tilde S_1,\tilde S_2,\dots,\tilde
S_L\in C^m(\{\frac{1}{4}<\abs{\vec x}<4\})$ such that the following hold:

\begin{gather}\label{algoritm_section_label_1}\abs{\partial^\alpha
\tilde F(\vec x)}\leq 1 \text { for }\frac{1}{4}<\abs{\vec
x}<4
\text{ and all }\abs{\alpha}\leq m; \end{gather}
\begin{gather}\label{algoritm_section_label_2}\abs{\partial^\alpha \tilde
S_l(\vec
x)}\leq 1 \text { for }\frac{1}{4}<\abs{\vec x}<4
\text{, all }\abs{\alpha}\leq m \text{ and all }1\leq l\leq L; \end{gather}
\begin{multline}\label{algoritm_section_label_3}p(\rho \vec x)=\epsilon \rho^m
\tilde F(\vec x)+A\tilde S_1(\vec x)Q_1(\rho \vec  x)+\dots+A\tilde S_L(\vec
x)Q_L(\rho \vec  x) \\ \text {for all }\vec x\text{ such that }\frac{1}{2}<\abs{\vec x}<2
\text{ and }\text{dist}(\frac{\vec x}{\abs{\vec x}},\Omega)<\delta.
\end{multline}\end{definition}

So we in fact proved the following lemma (that follows from Lemma \ref{lemma_on_conditions_number_1}):

\begin{lemma}\label{lemma_on_conditions_number_2}$I$ implies $p$ if and only
if there exists $A>0$ such that for any $\epsilon>0$
there exist $\delta,r>0$
for which, for all $\rho\in(0,r]$, condition $\mathcal{C^{*}}(A,\epsilon,\delta,r,\rho;p,Q_1,\dots,Q_L)$
holds.\end{lemma}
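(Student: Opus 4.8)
The statement (Lemma \ref{lemma_on_conditions_number_2}) is an immediate corollary of Lemma \ref{lemma_on_conditions_number_1} once we establish the claimed equivalence between conditions $\mathcal{C}$ and $\mathcal{C}^*$, so the whole task reduces to verifying that equivalence and then citing Lemma \ref{lemma_on_conditions_number_1} verbatim. Thus the plan is: (i) show $\mathcal{C}(A,\epsilon,\delta,r,\rho;p,Q_1,\dots,Q_L)\Longleftrightarrow\mathcal{C}^*(A,\epsilon,\delta,r,\rho;p,Q_1,\dots,Q_L)$ by the explicit rescaling already written in the text; (ii) plug this equivalence into Lemma \ref{lemma_on_conditions_number_1}, which characterizes ``$I$ implies $p$'' in terms of $\mathcal{C}$, to obtain the same characterization in terms of $\mathcal{C}^*$.

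For step (i), suppose $\mathcal{C}$ holds, witnessed by $F,S_1,\dots,S_L\in C^m(\text{Ann}_4(\rho))$ satisfying (\ref{new_def_of_implied_label_1})--(\ref{new_def_of_implied_label_3}). Define $\tilde F(\vec x):=\epsilon^{-1}\rho^{-m}F(\rho\vec x)$ and $\tilde S_l(\vec x):=A^{-1}S_l(\rho\vec x)$, which are $C^m$ on $\{\frac14<\abs{\vec x}<4\}$ since $\vec x\mapsto\rho\vec x$ carries this annulus onto $\text{Ann}_4(\rho)$. By the chain rule $\partial^\alpha\tilde F(\vec x)=\epsilon^{-1}\rho^{-m+\abs{\alpha}}(\partial^\alpha F)(\rho\vec x)$, so (\ref{new_def_of_implied_label_1}) (applied at the point $\rho\vec x\in\text{Ann}_4(\rho)$) gives $\abs{\partial^\alpha\tilde F(\vec x)}\le\epsilon^{-1}\rho^{-m+\abs{\alpha}}\cdot\epsilon\rho^{m-\abs{\alpha}}=1$, which is (\ref{algoritm_section_label_1}); similarly $\abs{\partial^\alpha\tilde S_l(\vec x)}=A^{-1}\rho^{\abs{\alpha}}\abs{(\partial^\alpha S_l)(\rho\vec x)}\le A^{-1}\rho^{\abs{\alpha}}\cdot A\rho^{-\abs{\alpha}}=1$, which is (\ref{algoritm_section_label_2}). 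Finally, evaluating (\ref{new_def_of_implied_label_3}) at $\rho\vec x$ (note $\rho\vec x\in\text{Ann}_2(\rho)$ and $\text{dist}(\frac{\rho\vec x}{\abs{\rho\vec x}},\Omega)=\text{dist}(\frac{\vec x}{\abs{\vec x}},\Omega)<\delta$ exactly when $\frac12<\abs{\vec x}<2$ and $\text{dist}(\frac{\vec x}{\abs{\vec x}},\Omega)<\delta$) yields $p(\rho\vec x)=F(\rho\vec x)+\sum_l S_l(\rho\vec x)Q_l(\rho\vec x)=\epsilon\rho^m\tilde F(\vec x)+\sum_l A\tilde S_l(\vec x)Q_l(\rho\vec x)$, which is (\ref{algoritm_section_label_3}). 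The converse direction is the same computation read backwards: given $\tilde F,\tilde S_l$ satisfying (\ref{algoritm_section_label_1})--(\ref{algoritm_section_label_3}), set $F(\vec x):=\epsilon\rho^m\tilde F(\rho^{-1}\vec x)$ and $S_l(\vec x):=A\tilde S_l(\rho^{-1}\vec x)$ on $\text{Ann}_4(\rho)$ and reverse the estimates, recovering (\ref{new_def_of_implied_label_1})--(\ref{new_def_of_implied_label_3}). Hence $\mathcal{C}$ and $\mathcal{C}^*$ are equivalent for every choice of parameters.

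For step (ii), Lemma \ref{lemma_on_conditions_number_1} states that $I$ implies $p$ iff there exists $A>0$ such that for every $\epsilon>0$ there exist $\delta,r>0$ for which $\mathcal{C}(A,\epsilon,\delta,r,\rho;p,Q_1,\dots,Q_L)$ holds for all $\rho\in(0,r]$. Since for each fixed quintuple of parameters $\mathcal{C}$ holds if and only if $\mathcal{C}^*$ holds, we may replace $\mathcal{C}$ by $\mathcal{C}^*$ throughout this nested quantifier statement without changing its truth value. This gives precisely the assertion of Lemma \ref{lemma_on_conditions_number_2}.

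There is essentially no obstacle here: the lemma is bookkeeping, packaging the already-displayed rescaling into the logical form of Lemma \ref{lemma_on_conditions_number_1}. The only point demanding a line of care is that the rescaling map $\vec x\mapsto\rho\vec x$ matches up the three regions $\text{Ann}_4(\rho)$, $\text{Ann}_4(1)$; $\text{Ann}_2(\rho)$, $\text{Ann}_2(1)$; and the $\delta$-neighborhood of $\Omega$ (scale-invariant, since it depends only on the direction $\vec x/\abs{\vec x}$) — all of which is immediate from the definitions of $\text{Ann}_K$ and $\text{dist}(\tfrac{\vec x}{\abs{\vec x}},\Omega)$. One could also remark that the choice of basis $Q_1,\dots,Q_L$ of $I$ is irrelevant to the equivalence, since the same $Q_l$'s appear on both sides.
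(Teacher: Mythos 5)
Your proposal is correct and follows exactly the paper's route: the paper likewise obtains the lemma by observing that the rescaling $\tilde F(\vec x)=\epsilon^{-1}\rho^{-m}F(\rho\vec x)$, $\tilde S_l(\vec x)=A^{-1}S_l(\rho\vec x)$ makes $\mathcal{C}$ and $\mathcal{C}^*$ equivalent for each fixed parameter tuple and then cites Lemma \ref{lemma_on_conditions_number_1}. You merely spell out the chain-rule estimates and the matching of domains that the paper dismisses with ``one easily sees,'' and all of those details check out.
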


We will now introduce another condition, and then see how it relates to condition
$\mathcal{C}^*(A,\epsilon,\delta,r,\rho;p,Q_1,\dots,Q_L)$.

\begin{definition}We say that condition $\mathcal{C}^{**}(\epsilon,\delta,r,\rho;p,Q_1,\dots,Q_L;A)$
holds if there exist functions $F^*,S^*_1,S^*_2,\dots,S^*_L\in C^m(\mathbb{R}^n)$
such that the following hold:

\begin{gather}\label{algoritm_section_label_4}\abs{\partial^\alpha
F^*(\vec x)}\leq A  \text { for all }\vec x\in\mathbb{R}^n
\text{ and all }\abs{\alpha}\leq m; \end{gather}
\begin{gather}\label{algoritm_section_label_5}\abs{\partial^\alpha S^*_l(\vec
x)}\leq A \text { for all }\vec x\in\mathbb{R}^n
\text{, all }\abs{\alpha}\leq m \text{ and all }1\leq l\leq L; \end{gather}
\begin{multline}\label{algoritm_section_label_6}p(\rho \vec x)=\epsilon \rho^m
F^*(\vec x)+S^*_1(\vec x)Q_1(\rho
\vec  x)+\dots+S^*_L(\vec
x)Q_L(\rho \vec  x) \\ \text {for all }\vec x\text{ such that }\frac{1}{2}<\abs{\vec x}<2
\text{ and }\text{dist}(\frac{\vec x}{\abs{\vec x}},\Omega)<\delta.
\end{multline}\end{definition}

Fix $\chi\in C^\infty(\mathbb{R}^n)$ such that $\chi=1$ on $\{\frac{1}{2}<\abs{\vec
x}<2\}$ and   $\chi=0$  outside $\{\frac{1}{4}<\abs{\vec
x}<4\}$, and denote $C_{\chi}=\abs{\abs{\chi}}_{C^m}$ (this is a constant
depending only $m$ and $n$). Assume that condition $\mathcal{C}^*(A,\epsilon,\delta,r,\rho;p,Q_1,\dots,Q_L)$
holds. Then, there exists a constant $\hat C>0$ depending only
on $C_\chi$ and $m$ (so only on $m$ and $n$) such that condition $\mathcal{C}^{**}(\epsilon,\delta,r,\rho;p,Q_1,\dots,Q_L;\hat
CA+\hat C)$ holds: indeed, replacing the functions $\tilde F,\tilde S_1,\tilde
S_2,\dots,\tilde
S_L\in C^m(\{\frac{1}{4}<\abs{\vec x}<4\})$
that satisfy $\mathcal{C^{*}}(A,\epsilon,\delta,r,\rho;p,Q_1,\dots,Q_L)$
by setting
$$F^*(\vec x):=\chi(\vec x)\cdot\tilde F(\vec x)\text{ and }S^{*}_l(\vec
x):=A\cdot \chi(\vec x)\cdot \tilde
S_{l}(\vec x)\text{ for any }1\leq l\leq L,$$ one
easily sees that condition  $\mathcal{C}^{**}(\epsilon,\delta,r,\rho;p,Q_1,\dots,Q_L;\hat
CA+\hat C)$ holds.
 
Vise versa, now assume that condition $\mathcal{C}^{**}(\epsilon,\delta,r,\rho;p,Q_1,\dots,Q_L;A)$
holds. Then,  condition $\mathcal{C}^{*}(A,A\epsilon,\delta,r,\rho;p,Q_1,\dots,Q_L)$
holds: indeed, replacing the functions $F^*,S^*_1,S^*_2,\dots,S^*_L\in C^m(\mathbb{R}^n)$
that satisfy $\mathcal{C^{**}}(A,\epsilon,\delta,r,\rho;p,Q_1,\dots,Q_L)$
by setting
$$\tilde F(\vec x):=A^{-1}\cdot F^*(\vec x)\text{ and }\tilde S_l:=A^{-1}\cdot
\tilde S_l(\vec
x)\text{ for any }1\leq l\leq L,$$ one
easily sees that condition  $\mathcal{C}^{*}(A,A\epsilon,\delta,r,\rho;p,Q_1,\dots,Q_L)$
holds.

\

We found that there exists a constant $\hat C$ depending only on $m,n$ such
that $$\mathcal{C}^*(A,\epsilon,\delta,r,\rho;p,Q_1,\dots,Q_L) \implies \mathcal{C}^{**}(\epsilon,\delta,r,\rho;p,Q_1,\dots,Q_L;\hat
CA+\hat C)$$ and $$\mathcal{C}^{**}(\epsilon,\delta,r,\rho;p,Q_1,\dots,Q_L;A)
\implies \mathcal{C}^{*}(A,A\epsilon,\delta,r,\rho;p,Q_1,\dots,Q_L).$$

\

We conclude that the following lemma holds (and now follows easily from Lemma
\ref{lemma_on_conditions_number_2}):

\begin{lemma}\label{lemma_on_conditions_number_3} $I$ implies $p$ if and
only if there exists $A>0$ such that for any $\epsilon>0$
there exist $\delta,r>0$
for which, for all $\rho\in(0,r]$, condition $\mathcal{C}^{**}(\epsilon,\delta,r,\rho;p,Q_1,\dots,Q_L;A)$
holds.\end{lemma}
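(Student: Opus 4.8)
The plan is simply to chain Lemma \ref{lemma_on_conditions_number_2} together with the two implications between conditions $\mathcal{C}^*$ and $\mathcal{C}^{**}$ just established, the only point requiring attention being how the constant $A$ and the tolerance $\epsilon$ are threaded through the quantifier block $\exists A\,\forall\epsilon\,\exists\delta,r\,\forall\rho$.

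For the ``only if'' direction, suppose $I$ implies $p$. By Lemma \ref{lemma_on_conditions_number_2} there is a constant $A_0>0$ such that for every $\epsilon>0$ one can find $\delta,r>0$ with $\mathcal{C}^*(A_0,\epsilon,\delta,r,\rho;p,Q_1,\dots,Q_L)$ holding for all $\rho\in(0,r]$. I would then set $A:=\hat CA_0+\hat C$ once and for all, where $\hat C$ is the constant depending only on $m,n$ produced above. Given $\epsilon>0$, take the $\delta,r$ supplied by Lemma \ref{lemma_on_conditions_number_2} for this same $\epsilon$; since $\mathcal{C}^*(A_0,\epsilon,\delta,r,\rho;\dots)\implies\mathcal{C}^{**}(\epsilon,\delta,r,\rho;\dots;\hat CA_0+\hat C)$ and $\epsilon$ passes through this implication unchanged, condition $\mathcal{C}^{**}(\epsilon,\delta,r,\rho;p,Q_1,\dots,Q_L;A)$ holds for all $\rho\in(0,r]$, which is exactly what is asserted.

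For the ``if'' direction, suppose there is $A>0$ such that for every $\epsilon>0$ there are $\delta,r>0$ with $\mathcal{C}^{**}(\epsilon,\delta,r,\rho;p,Q_1,\dots,Q_L;A)$ holding for all $\rho\in(0,r]$. To recover the form of Lemma \ref{lemma_on_conditions_number_2}, I would take the constant there to be $A$ itself. Given any $\epsilon'>0$, apply the hypothesis with $\epsilon:=\epsilon'/A$ to obtain $\delta,r>0$ with $\mathcal{C}^{**}(\epsilon'/A,\delta,r,\rho;\dots;A)$ for all $\rho\in(0,r]$; the implication $\mathcal{C}^{**}(\epsilon'/A,\delta,r,\rho;\dots;A)\implies\mathcal{C}^*(A,A\cdot(\epsilon'/A),\delta,r,\rho;\dots)=\mathcal{C}^*(A,\epsilon',\delta,r,\rho;\dots)$ then gives $\mathcal{C}^*(A,\epsilon',\delta,r,\rho;p,Q_1,\dots,Q_L)$ for all $\rho\in(0,r]$, with the same $\delta,r$. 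By Lemma \ref{lemma_on_conditions_number_2}, $I$ implies $p$.

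There is no analysis here beyond what is already in hand, so there is no serious obstacle; the only thing to get right — the ``hard part,'' such as it is — is that the rescaling is applied to the constant $A$ (once, globally) in the ``only if'' direction, whereas in the ``if'' direction it must instead be applied to the tolerance $\epsilon$ (before invoking the hypothesis) so that the constant can stay fixed, as the quantifier $\exists A$ precedes $\forall\epsilon$ in both statements.
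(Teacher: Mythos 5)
Your proposal is correct and follows exactly the route the paper intends: the paper establishes the two implications $\mathcal{C}^*\Rightarrow\mathcal{C}^{**}$ and $\mathcal{C}^{**}\Rightarrow\mathcal{C}^*$ and then asserts the lemma ``follows easily'' from Lemma \ref{lemma_on_conditions_number_2}; you have merely written out the quantifier bookkeeping, correctly fixing $A=\hat CA_0+\hat C$ once in the forward direction and rescaling the tolerance to $\epsilon'/A$ before invoking the hypothesis in the converse.
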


\

We are now ready to introduce the relevant parametrized bundle. We define
a bundle \begin{gather}\label{algoritm_section_label_7}\mathcal{H}=\{H^\xi_{\vec
x}\}^{\xi\in\hat
E}_{\vec x\in
E(\xi)}\end{gather} where \begin{gather}\label{algoritm_section_label_8}\hat
E:=(\epsilon,\delta,r,\rho,p,Q_1,\dots,Q_L)\in (0,\infty)^4\times \mathcal{P}^{L+1};\end{gather}\begin{gather}\label{algoritm_section_label_9}E:=\{\vec
x\in\mathbb{R}^n:\frac{1}{2}\leq \abs{\vec x}\leq2\},\end{gather}and for
each $\xi\in\hat E$ and $\vec x\in E$ we define \begin{multline}\label{algoritm_section_label_10}H_{\vec
x}^\xi:=\{(P_0,P_1,\dots,P_L)\in\mathcal{P}^{L+1}:p(\rho\vec x)=\epsilon\rho^m
P_0(\vec x)+\sum\limits_{l=1}^{L}P_l(\vec x)Q_l(\rho
\vec  x)\} \\ \text{if }\text{dist}(\frac{\vec x}{\abs{\vec x}},\Omega)<\delta\text{ and }\frac{1}{2}<\abs{\vec x}<2;\end{multline}
\begin{gather}\label{algoritm_section_label_11}H_{\vec
x}^\xi:=\mathcal{P}^{L+1}\text{ otherwise}.\end{gather}

\begin{remark}\label{remark_our_paramet_bund_is_semi_algeb}The parametrized
bundle ${\mathcal{H}}$ given by (\ref{algoritm_section_label_7})-(\ref{algoritm_section_label_11})
is semi-algebraic. That follows from the fact that this bundle is given by
formulae that are first order
definable in the language of real fields with parameters from $\mathbb{R}$.
In order to see that the set $\Omega:=Allow(I)$ is semi-algebraic, note
that, by the triangle inequality  we have the following formula for this set:
\begin{multline*}Allow(I)=\{\omega\in
S^{n-1}:\text{ there do not exist } c,\delta,r>0\text{ such that }\\ \frac{\abs{Q_1(\vec
x)}+\abs{Q_2(\vec x)}+\dots+\abs{Q_L(\vec x)}}{\abs{\vec
{x}}^m}>c\text{ for any }\vec x\in\Gamma(\omega,\delta,r)\}.\end{multline*}We
leave it to the reader to verify the remaining details.\end{remark}

Comparing (\ref{algoritm_section_label_7})-(\ref{algoritm_section_label_11})
with (\ref{algoritm_section_label_4})-(\ref{algoritm_section_label_6}) we
see that for a fixed  $\xi=(\epsilon,\delta,r,\rho;p,Q_1,\dots,Q_L)$, condition
$\mathcal{C}^{**}(\epsilon,\delta,r,\rho;p,Q_1,\dots,Q_L;A)$ holds if and
only if $\mathcal{H}^\xi$ admits a section with $C^m$-norm at most $A$. 

\

We conclude that the following lemma holds (now follows easily from Lemma
\ref{lemma_on_conditions_number_3})

\begin{lemma}\label{lemma_on_conditions_number_4}$I$ implies $p$ if and only
if there exists $A>0$ such that
for any $\epsilon>0$
there exist $\delta,r>0$
for which, for all $\rho\in(0,r]$, the bundle $\mathcal{H}^\xi$, with $\xi=(\epsilon,\delta,r,\rho;p,Q_1,\dots,Q_L)$,
admits a section with $C^m$-norm at most $A$.\end{lemma}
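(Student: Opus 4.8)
The plan is simply to translate Lemma \ref{lemma_on_conditions_number_3} into the language of sections of the parametrized bundle $\mathcal{H}$ defined in (\ref{algoritm_section_label_7})--(\ref{algoritm_section_label_11}). By Lemma \ref{lemma_on_conditions_number_3}, the ideal $I$ implies $p$ if and only if there exists $A>0$ such that for every $\epsilon>0$ one can choose $\delta,r>0$ with the property that condition $\mathcal{C}^{**}(\epsilon,\delta,r,\rho;p,Q_1,\dots,Q_L;A)$ holds for all $\rho\in(0,r]$. Thus the only thing left to verify is the following purely formal equivalence: for a fixed parameter value $\xi=(\epsilon,\delta,r,\rho,p,Q_1,\dots,Q_L)\in\hat E$, condition $\mathcal{C}^{**}(\epsilon,\delta,r,\rho;p,Q_1,\dots,Q_L;A)$ holds precisely when the bundle $\mathcal{H}^\xi$ admits a section $\vec G$ with $\abs{\abs{\vec G}}_{C^m}\le A$.

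First I would spell out what a section of $\mathcal{H}^\xi$ is: an $\mathbb{R}^{L+1}$-valued $C^m$ function $\vec G=(G_0,G_1,\dots,G_L)$ on $\mathbb{R}^n$ with $J_{\vec x}\vec G=(J_{\vec x}G_0,\dots,J_{\vec x}G_L)\in H^\xi_{\vec x}$ for every $\vec x\in E=\{\frac12\le\abs{\vec x}\le2\}$. For $\vec x\in E$ with $\frac12<\abs{\vec x}<2$ and $\text{dist}(\frac{\vec x}{\abs{\vec x}},\Omega)<\delta$, the fiber $H^\xi_{\vec x}$ is by (\ref{algoritm_section_label_10}) the affine subset of $\mathcal{P}^{L+1}$ defined by $p(\rho\vec x)=\epsilon\rho^m P_0(\vec x)+\sum_{l=1}^L P_l(\vec x)Q_l(\rho\vec x)$; since the $m$-jet of a function at a point, evaluated at that same point, returns the function value, the requirement $J_{\vec x}\vec G\in H^\xi_{\vec x}$ reduces to the identity $p(\rho\vec x)=\epsilon\rho^m G_0(\vec x)+\sum_{l=1}^L G_l(\vec x)Q_l(\rho\vec x)$ at that $\vec x$. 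At every other $\vec x\in E$ the fiber is all of $\mathcal{P}^{L+1}$ by (\ref{algoritm_section_label_11}), so no constraint is imposed there. Hence a section of $\mathcal{H}^\xi$ is exactly a tuple $(F^*,S^*_1,\dots,S^*_L)$ of $C^m$ functions on $\mathbb{R}^n$ --- namely $F^*=G_0$ and $S^*_l=G_l$ --- satisfying (\ref{algoritm_section_label_6}), and conversely any such tuple is a section. Finally, by the definition of the $C^m$-norm of a section, $\abs{\abs{\vec G}}_{C^m}\le A$ is precisely the conjunction of the estimates (\ref{algoritm_section_label_4}) and (\ref{algoritm_section_label_5}). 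This yields the claimed equivalence for a single $\xi$, and combined with Lemma \ref{lemma_on_conditions_number_3} the lemma follows.

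There is no genuine obstacle here; the whole argument is the bookkeeping of matching the data $\{H^\xi_{\vec x}\}$ against the ingredients of $\mathcal{C}^{**}$. The one point worth a line of care is that the base $E$ in (\ref{algoritm_section_label_9}) is the \emph{closed} annulus $\frac12\le\abs{\vec x}\le2$, whereas the constraint in (\ref{algoritm_section_label_10}) is only active on the \emph{open} annulus $\frac12<\abs{\vec x}<2$ and only for directions within $\delta$ of $\Omega$; but (\ref{algoritm_section_label_6}) likewise only demands the identity there, so the two descriptions agree and nothing is lost.
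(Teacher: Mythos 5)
Your proof is correct and follows essentially the same route as the paper, which likewise deduces the lemma from Lemma \ref{lemma_on_conditions_number_3} by observing that, for fixed $\xi$, condition $\mathcal{C}^{**}(\epsilon,\delta,r,\rho;p,Q_1,\dots,Q_L;A)$ holds if and only if $\mathcal{H}^\xi$ admits a section of $C^m$-norm at most $A$. Your explicit unwinding of the fiber condition via $(J_{\vec x}G_l)(\vec x)=G_l(\vec x)$ and your remark on the open versus closed annulus are exactly the bookkeeping the paper leaves implicit.
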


\

Now let $\tilde{\mathcal{H}}=\{\tilde H^\xi_{\vec x}\}^{\xi\in\hat
E}_{\vec x\in
E}$ be the stable Glaeser refinement of $\mathcal{H}$, and let $k^\#$ be
the integer constant from Theorem \ref{theorem_on_bundles}. Then Lemma \ref{lemma_on_conditions_number_4}
and Theorem \ref{theorem_on_bundles} imply the following:

\begin{proposition}\label{lemma_on_conditions_number_5}

$I$ implies $p$ if and only if there exists $M>0$ such that
for any $\epsilon>0$
there exist $\delta,r>0$
for which, for all $\rho\in(0,r]$, denoting $\xi=(\epsilon,\delta,r,\rho;p,Q_1,\dots,Q_L)$,
the following holds:\begin{multline}\label{algorithn_section_label_12}\text{For
any }\vec x_1,\dots,\vec x_{k^\#}\in E \text{ there exist }P_1\in \tilde
H^\xi_{\vec x_1},\dots,P_k\in
\tilde H^\xi_{\vec x_{k^\#}}\text{ such that} \\ \abs{\partial^\alpha P_i(\vec
x_i)}\leq M\text{ for all }1\leq i\leq
k^\# \text{ and all }\abs{\alpha}\leq m\text{ }\text{ }\text{ }\text{ }\text{
}\text{ }\text{ }\text{ }\text{ }\text{ and} \\ \abs{\partial^\alpha(P_i-P_j)(\vec
x_i)}\leq
M\abs{\vec x_i-\vec x_j}^{m-\abs{\alpha}}\text{ for all }1\leq i, j\leq k^\# \text{ }(\vec x_i,\vec x_j\text{ distinct}) \text{
and all }\abs{\alpha}\leq m. \end{multline}\end{proposition}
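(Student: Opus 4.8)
\textbf{Proof plan for Proposition \ref{lemma_on_conditions_number_5}.}

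The plan is to simply combine Lemma \ref{lemma_on_conditions_number_4} with the two quantitative statements \eqref{BB_label_3} and \eqref{BB_label_4} of Theorem \ref{theorem_on_bundles}, applied fiberwise (i.e., for each fixed parameter $\xi$) to the parametrized bundle $\mathcal{H}$ of \eqref{algoritm_section_label_7}--\eqref{algoritm_section_label_11} and its stable Glaeser refinement $\tilde{\mathcal{H}}$. First I would recall that by Lemma \ref{lemma_on_conditions_number_4}, ``$I$ implies $p$'' is equivalent to the existence of a single constant $A>0$ such that for every $\epsilon>0$ there are $\delta,r>0$ with the property that for all $\rho\in(0,r]$ the bundle $\mathcal{H}^\xi$ (with $\xi=(\epsilon,\delta,r,\rho;p,Q_1,\dots,Q_L)$) admits a section of $C^m$-norm $\le A$. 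So the task is to show this uniform-over-$\xi$ solvability statement is equivalent to \eqref{algorithn_section_label_12} with a uniform $M$.

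Next I would apply Theorem \ref{theorem_on_bundles} to the individual bundle $\mathcal{H}^\xi$ for each fixed $\xi$. By \eqref{BB_label_3}, $\mathcal{H}^\xi$ has a section if and only if $\tilde H^\xi_{\vec x}\neq\emptyset$ for all $\vec x\in E$; and in that case, by \eqref{BB_label_4}, the minimal $C^m$-norm of a section of $\mathcal{H}^\xi$ is comparable, with the universal constants $c,C>0$ depending only on $m,n,\mathcal{D}$ (here $\mathcal{D}=L+1$), to $\|\tilde{\mathcal{H}}^\xi\|_{k^\#}$, the $k^\#$-norm of the refined bundle $\tilde{\mathcal{H}}^\xi$. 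Thus the existence of a section of $\mathcal{H}^\xi$ with $C^m$-norm $\le A$ is equivalent to the statement that $\tilde{\mathcal{H}}^\xi$ has all fibers nonempty and $\|\tilde{\mathcal{H}}^\xi\|_{k^\#}\le A/c$ (and conversely $\|\tilde{\mathcal{H}}^\xi\|_{k^\#}\le M$ with nonempty fibers forces a section of $C^m$-norm $\le CM$). Unwinding the definition of $\|\tilde{\mathcal{H}}^\xi\|_{k^\#}$ via \eqref{BB_label_2} yields precisely the combinatorial statement \eqref{algorithn_section_label_12}: for any $\vec x_1,\dots,\vec x_{k^\#}\in E$ one can pick $P_i\in\tilde H^\xi_{\vec x_i}$ with the stated derivative bounds and Whitney-type compatibility bounds. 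Absorbing the universal constants $c,C$ into the choice of $M$ versus $A$ (replacing $A$ by $A/c$ in one direction and $M$ by $CM$ in the other, which is harmless since only the \emph{existence} of some uniform constant is asserted), I get the desired equivalence. Note that when some fiber $\tilde H^\xi_{\vec x}$ is empty, the $k^\#$-norm is $+\infty$ and there is no section, so both sides of the equivalence fail simultaneously; hence I need not treat emptiness as a separate case, it is automatically encoded in \eqref{algorithn_section_label_12} being unsatisfiable for any finite $M$.

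The only mild subtlety — and the step I would be most careful about — is the bookkeeping of the quantifier $\exists A$ versus $\exists M$ through the two-sided comparison of \eqref{BB_label_4}: one must check that the constants $c,C$ are genuinely independent of $\xi$, which is exactly what Theorem \ref{theorem_on_bundles} guarantees (they depend only on $m,n,\mathcal{D}$), so the single constant $A$ can be traded for a single constant $M=A/c$ and vice versa without disturbing the order of the quantifiers ``$\exists$ constant, $\forall\epsilon$, $\exists\delta,r$, $\forall\rho$''. I would also remark that $\tilde{\mathcal{H}}$ is a legitimate semi-algebraic parametrized bundle by Theorem \ref{thm_on_glaeser_ref_of_parameter_bund} and Remark \ref{remark_our_paramet_bund_is_semi_algeb}, so that \eqref{algorithn_section_label_12} is a meaningful, in-principle-checkable condition; but this observation is not needed for the logical equivalence itself, only for the algorithmic payoff discussed afterward. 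The remaining details are routine and left to the reader.
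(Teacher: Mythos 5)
Your proposal is correct and follows exactly the route the paper intends: the paper derives the proposition in one line from Lemma \ref{lemma_on_conditions_number_4} together with (\ref{BB_label_3}) and (\ref{BB_label_4}) of Theorem \ref{theorem_on_bundles}, applied to each bundle $\mathcal{H}^\xi$ separately, and your write-up supplies precisely those details. Your emphasis on the $\xi$-independence of the constants $c,C$ (so that $A$ and $M$ can be traded without disturbing the quantifier order) is the right point to be careful about, and your handling of empty fibers via (\ref{BB_label_3}) is also consistent with the paper.
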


\

Proposition \ref{lemma_on_conditions_number_5}  gives rise to an algorithm to compute the closure of a given ideal $I$ of $m$-jets: Given a basis $Q_1,\dots, Q_L$ for $I$ as a vector space, we form the parametrized bundle $\mathcal{H}$ defined by (\ref{algoritm_section_label_7})-(\ref{algoritm_section_label_11}), then compute its stable Glaeser refinement $\tilde{\mathcal{H}}$. We know that $\tilde{\mathcal{H}}$ is semi-algebraic thanks to Remark \ref{remark_our_paramet_bund_is_semi_algeb} and (\ref{BB_label_5}). Once $\tilde{\mathcal{H}}$ is known, Proposition \ref{lemma_on_conditions_number_5}  expresses the condition that $I$ implies $p$ as a first order definable condition in the language of real fields with parameters from
$\mathbb{R}$. Consequently, standard semi-algebraic technology allows us to compute the set of all $p$ implied by $I$ as a semi-algebraic set. Since the set of such $p$ is a vector subspace of $\mathcal{P}_0^{m}(\mathbb{R}^n)$ another application of standard semi-algebraic technology computes a basis for that subspace. Thus, in principle, we have computed the closure of a given ideal $I$ of $m$-jets.

We have proved the following result.

\begin{theorem}\label{thm_closure_is_computable}Let $I\lhd\mathcal{P}_0^m(\mathbb{R}^n)$
be an ideal. Then, the above procedure computes (in principle) the closure of $I$. \end{theorem}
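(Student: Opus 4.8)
The plan is to assemble the machinery already developed in Sections 2 and 3 into a single chain of equivalences, culminating in a first-order definable description of the closure that can be fed to semi-algebraic quantifier-elimination. First I would recall the standing assumptions of the section: $I\lhd\mathcal{P}_0^m(\mathbb{R}^n)$ is given by a vector-space basis $Q_1,\dots,Q_L$, we have dealt separately with the trivial case $\Omega=Allow(I)=\emptyset$ via Corollary \ref{cor_no_allowed_directions}, and so we may assume $\Omega\neq\emptyset$. The point is that everything in sight is either an explicit algebraic datum (the $Q_l$, the exponents $\rho^m$, $\rho^{-|\alpha|}$) or is itself semi-algebraic and computable from that datum; in particular, by Remark \ref{remark_our_paramet_bund_is_semi_algeb}, the set $\Omega$ is semi-algebraic and the parametrized bundle $\mathcal{H}$ of (\ref{algoritm_section_label_7})--(\ref{algoritm_section_label_11}) is a semi-algebraic parametrized bundle whose defining set is computable from $Q_1,\dots,Q_L$.

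The key steps, in order, are: (1) Invoke Theorem \ref{thm_on_glaeser_ref_of_parameter_bund} to conclude that the stable Glaeser refinement $\tilde{\mathcal{H}}=\{\tilde H_{\vec x}^{\xi}\}$ is again semi-algebraic and is computable from $\mathcal{H}$, hence computable from the basis $Q_1,\dots,Q_L$. (2) Invoke Proposition \ref{lemma_on_conditions_number_5}, which (building on Lemmas \ref{lemma_on_conditions_number_1}--\ref{lemma_on_conditions_number_4} and Theorem \ref{theorem_on_bundles}) says that, for a fixed target jet $p\in\mathcal{P}_0^m(\mathbb{R}^n)$, the statement ``$I$ implies $p$'' is equivalent to the existence of $M>0$ such that for every $\epsilon>0$ there are $\delta,r>0$ with: for all $\rho\in(0,r]$, the finite-point estimate (\ref{algorithn_section_label_12}) on $\tilde{\mathcal{H}}^{\xi}$ holds with $\xi=(\epsilon,\delta,r,\rho;p,Q_1,\dots,Q_L)$. (3) Observe that, because $\tilde{\mathcal{H}}$ is a \emph{computed} semi-algebraic set, the predicate ``$\vec P\in\tilde H_{\vec x}^{\xi}$'' is a first-order formula in the language of ordered fields with parameters in $\mathbb{R}$ in the free variables $(\xi,\vec x,\vec P)$; the constant $k^\#$ is a fixed natural number; and the bound estimates in (\ref{algorithn_section_label_12}) together with all the quantifiers over $M,\epsilon,\delta,r,\rho$ and the finitely many points $\vec x_1,\dots,\vec x_{k^\#}\in E$ are themselves first-order (the condition $\rho\in(0,r]$, the norms $\abs{\partial^\alpha(\cdot)}$, etc., are polynomial inequalities in the coefficients once one differentiates the polynomial $P_i$). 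Hence the predicate ``$I$ implies $p$'', as a condition on the coefficient vector of $p\in\mathcal{P}_0^m(\mathbb{R}^n)\cong\mathbb{R}^{\mathcal{D}^*}$, is first-order definable over $\mathbb{R}$ with parameters the (already fixed) coefficients of $Q_1,\dots,Q_L$. (4) Apply Tarski--Seidenberg / semi-algebraic quantifier elimination to that formula to \emph{compute} the set $cl(I)=\{p:\ I\text{ implies }p\}$ as an explicit semi-algebraic subset of $\mathcal{P}_0^m(\mathbb{R}^n)$. (5) Since $cl(I)$ is a linear subspace (Remark after Definition \ref{the_closure_of_ideal_definition}), one further routine semi-algebraic computation extracts a basis for it — e.g. compute the (semi-algebraically definable) dimension, then successively pick lattice-style independent points of $cl(I)$ by solving the definable membership condition — so the output is a finite list of jets spanning $cl(I)$. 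This is exactly the ``above procedure'' referred to in the statement, so Theorem \ref{thm_closure_is_computable} follows.

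The main obstacle, and the step that deserves the most care, is step (3): one must check that \emph{every} clause appearing in Proposition \ref{lemma_on_conditions_number_5} really is first-order expressible over the real field \emph{uniformly in the parameter $\xi$}, and in particular that nothing smuggles in a genuinely analytic (non-algebraic) quantity. The potentially worrying ingredients are the appearances of $\rho^m$ and $\rho^{-|\alpha|}$ and the substitution $\vec x\mapsto\rho\vec x$, $p(\rho\vec x)$, $Q_l(\rho\vec x)$ — but $m$ and each $|\alpha|\le m$ are fixed integers, so $\rho^m$, $\rho^{|\alpha|}$ are polynomials in $\rho$ (and $\rho^{-|\alpha|}$ is handled by clearing denominators, using $\rho>0$), and $p(\rho\vec x)$, $Q_l(\rho\vec x)$ are polynomials in $(\rho,\vec x)$ whose coefficients are polynomial in the coefficients of $p$, $Q_l$; likewise $\abs{\partial^\alpha P_i(\vec x_i)}\le M$ becomes $\bigl(\partial^\alpha P_i(\vec x_i)\bigr)^2\le M^2$, a polynomial inequality. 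The set $\Omega=Allow(I)$ has already been given a first-order description in Remark \ref{remark_our_paramet_bund_is_semi_algeb}, and $\tilde{\mathcal{H}}$ is first-order definable by construction (the iterated Glaeser refinement is defined by first-order formulae, as noted in the footnote to Theorem \ref{theorem_on_bundles} and in Theorem \ref{thm_on_glaeser_ref_of_parameter_bund}). So no analytic quantity enters, and the whole condition collapses to a sentence in the language of real closed fields with parameters, to which quantifier elimination applies. I would also remark, as the paper itself does, that although the procedure terminates in principle, the number of quantifier alternations (over $M$, then $\epsilon$, then $\delta,r$, then $\rho$, then the $k^\#$ points, plus the internal quantifiers defining $\tilde{\mathcal{H}}$ and $\Omega$) makes it wholly impractical to run by hand — but impracticality is not in question here, only computability in principle, which is what the theorem asserts.
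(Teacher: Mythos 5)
Your proposal is correct and follows essentially the same route as the paper: form the parametrized bundle (\ref{algoritm_section_label_7})--(\ref{algoritm_section_label_11}), compute its stable Glaeser refinement via Theorem \ref{thm_on_glaeser_ref_of_parameter_bund}, use Proposition \ref{lemma_on_conditions_number_5} to render ``$I$ implies $p$'' as a first-order condition over the reals, and then apply quantifier elimination followed by a routine extraction of a basis of the resulting linear subspace. Your step (3) merely spells out the definability details that the paper delegates to ``standard semi-algebraic technology'' and to the reader.
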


\begin{remark}\label{remark_on_semi_algebaric_map_exists}Fix $L\in\mathbb{N}$.
Then, Theorem \ref{thm_closure_is_computable} together with standard semi-algebraic
geometry arguments show that there exists a semi-algebraic map that maps
any basis of an ideal $Q_1,\dots,Q_L\in\mathcal{P}_0^m(\mathbb{R}^n)$ to
a basis for the closure of the ideal, i.e.,  a basis for $cl(\langle Q_1,\dots,Q_L\rangle_m)$.
\end{remark}

Recall that $\mathcal{P}_0^{m}(\mathbb{R}^n)$ is a finite
dimensional vector space and fix $P_1,\dots,P_{\mathcal{D}^*}$, some basis
(as a vector space) of $\mathcal{P}_0^{m}(\mathbb{R}^n)$. Then, we can naturally
identify $\mathcal{P}_0^{m}(\mathbb{R}^n)$ with $\mathbb{R}^{\mathcal{D}^*}$.
Fix an integer $1\leq L\leq \mathcal{D}^*$. We can now identify each ordered
set of jets $Q_1,\dots Q_{L}\in\mathcal{P}_0^{m}(\mathbb{R}^n)$, not necessarily
all different jets, as a point in $\mathbb{R}^{L\cdot \mathcal{D}^*}$. Then,
each point in $\mathbb{R}^{L\cdot \mathcal{D}^*}$ defines a vector space
$I=\text{span}_{\mathbb{R}}\{
Q_1,\dots Q_{L}\}$ of dimension at most $L$. So we can think of any point
in  $\mathbb{R}^{L\cdot \mathcal{D}^*}$ as a vector space in $\mathcal{P}_0^{m}(\mathbb{R}^n)$
of dimension at most $L$, where not every two different points represent
necessarily different vector spaces.

Note that the condition "$Q_1,\dots,Q_L$ are linearly independent" is first
order
definable in the language of real fields with parameters from $\mathbb{R}$.
Moreover, note that the condition "$\text{span}_{\mathbb{R}}\{
Q_1,\dots Q_{L}\}=\langle
Q_1,\dots Q_{L} \rangle_m$" 
is also first
order
definable in the language of real fields with parameters from $\mathbb{R}$.
We conclude that the set of points in $\mathbb{R}^{L\cdot \mathcal{D}^*}$
that represent bases of ideals of dimension $L$ is a semi-algebraic set.

Now Remark \ref{remark_on_semi_algebaric_map_exists} implies the following:

\begin{theorem}Fix $1\leq L\leq \dim\mathcal{P}_0^m(\mathbb{R}^n)$. Then,
the set of points in $\mathbb{R}^{L\cdot \mathcal{D}^*}$
that represent bases of closed ideals of dimension $L$ is semi-algebraic.
\end{theorem}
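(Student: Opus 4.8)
The plan is to deduce the statement from Remark \ref{remark_on_semi_algebaric_map_exists} together with the standing setup in which an ordered $L$-tuple of jets $(Q_1,\dots,Q_L)\in\mathcal{P}_0^m(\mathbb{R}^n)^L$ is identified with a point of $\mathbb{R}^{L\cdot\mathcal{D}^*}$. First I would recall that the conditions ``$Q_1,\dots,Q_L$ are linearly independent'' and ``$\mathrm{span}_{\mathbb{R}}\{Q_1,\dots,Q_L\}=\langle Q_1,\dots,Q_L\rangle_m$'' are first-order definable in the language of real fields with parameters from $\mathbb{R}$ (the first is the non-vanishing of at least one $L\times L$ minor, the second says that each product $Q_iP_j$, which is an $\mathbb{R}$-linear function of the coordinates of the $Q_i$, lies in $\mathrm{span}_{\mathbb{R}}\{Q_1,\dots,Q_L\}$, again a rank condition on a matrix whose entries are polynomial in the parameters). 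Hence, as already noted in the excerpt, the set $\mathcal{B}_L\subset\mathbb{R}^{L\cdot\mathcal{D}^*}$ of points representing bases of ideals of dimension exactly $L$ is semi-algebraic.

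Next I would invoke Remark \ref{remark_on_semi_algebaric_map_exists}: there is a semi-algebraic map $\Phi:\mathcal{B}_L\to\mathbb{R}^{\mathcal{D}^*\cdot\mathcal{D}^*}$ (or into the appropriate power of $\mathbb{R}^{\mathcal{D}^*}$) which sends a basis $(Q_1,\dots,Q_L)$ of an ideal $I$ to a tuple whose span is $cl(I)$. The desired set is
\begin{equation*}
\mathcal{C}_L:=\{(Q_1,\dots,Q_L)\in\mathcal{B}_L : \langle Q_1,\dots,Q_L\rangle_m \text{ is closed}\}.
\end{equation*}
Now $\langle Q_1,\dots,Q_L\rangle_m$ is closed precisely when $cl(I)=I$, i.e. when every generator returned by $\Phi$ already lies in $\mathrm{span}_{\mathbb{R}}\{Q_1,\dots,Q_L\}$. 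Writing $\Phi(Q_1,\dots,Q_L)=(R_1,\dots,R_N)$, this is the condition ``$R_j\in\mathrm{span}_{\mathbb{R}}\{Q_1,\dots,Q_L\}$ for all $j$'', which is again a rank condition on a matrix with entries polynomial in the coordinates of the $Q_i$ and the $R_j$; since the $R_j$ depend semi-algebraically on the $Q_i$, this defines a semi-algebraic subset of $\mathcal{B}_L$. Equivalently, one may phrase it as $\dim_{\mathbb{R}}\mathrm{span}\{Q_1,\dots,Q_L,R_1,\dots,R_N\}=L$, a constructible condition. Either way, $\mathcal{C}_L$ is the intersection of $\mathcal{B}_L$ with a semi-algebraic set, hence semi-algebraic, proving the theorem.

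The only genuine content is Remark \ref{remark_on_semi_algebaric_map_exists} (equivalently Theorem \ref{thm_closure_is_computable}), which is assumed; everything after that is the routine observation that ``being closed'' means ``the semi-algebraically computed closure coincides with the given span,'' and that coincidence of two finite-dimensional subspaces presented by (semi-algebraically varying) spanning sets is a semi-algebraic condition on the parameters. The main point to be careful about is bookkeeping: one should make sure the map $\Phi$ is defined on the whole of $\mathcal{B}_L$ (not just on a basis-normalized slice) and that composing it with the span/rank test stays within first-order definable operations over $\mathbb{R}$ — but this is exactly the kind of manipulation the paper has already been using freely, so no new obstacle arises.
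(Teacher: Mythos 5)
Your proposal is correct and follows essentially the same route as the paper: the paper likewise establishes that the set of $L$-tuples representing bases of $L$-dimensional ideals is semi-algebraic via first-order definability of the linear-independence and span-equals-ideal conditions, and then deduces the theorem directly from Remark \ref{remark_on_semi_algebaric_map_exists}. Your explicit spelling-out of the containment $cl(I)\subset\mathrm{span}_{\mathbb{R}}\{Q_1,\dots,Q_L\}$ as a rank condition is exactly the (unstated) content of the paper's ``Now Remark \ref{remark_on_semi_algebaric_map_exists} implies the following.''
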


We do not know whether the set of points in $\mathbb{R}^{L\cdot \mathcal{D}^*}$
that represent bases of ideals of dimension $L$ of the form $I^m(E)$ for
some closed $E\subset\mathbb{R}^n$ is semi-algebraic. We also do not know
whether the set of points in $\mathbb{R}^{L\cdot \mathcal{D}^*}$
that represent bases of ideals of dimension $L$ of the form $I^m(E)$ for
some semi-algebraic closed $E\subset\mathbb{R}^n$ is semi-algebraic. These
question are closely related to Question \ref{main_question} and Question
\ref{semi_algebraic_question}.


\begin{thebibliography}{MMM}

\bibitem[BaCR]{BaCR} S.~Basu, R.~Pollak and M-F.~Roy, {\em Algorithms in real
geometry}, Volume 10 of \textit{Algorithms and computations in Mathematics}, Springer-Verlag, Berlin, Second Edition, 2016. 

\bibitem[BoCR]{BCR} J.~Bochnak, M.~Coste and M-F.~Roy, {\em Real algebraic geometry}, Ergebnisse der
Mathematik
und ihrer Grenzgebiete, 36, Springer-Verlag, Berlin Heidelberg, 1998. http://dx.doi.org/10.1007/978-3-662-03718-8. 
\bibitem[vdD]{vdD} L.~van den Dries, {\em Tame Topology and O-minimal Structures},
London Mathematical Society Lecture Notes Series, vol. 248, Cambridge University Press, Cambridge, 1998, ISBN: 0 521 59838 9 (paperback).


\bibitem[F1]{F1} C.~Fefferman, {\em Whitney's extension problem for $C^m$}, Ann. of Math. (2) 164 (2006), no.1, 313--359, DOI:10.4007/annals.2006.164.313.

\bibitem[F2]{F} C.~Fefferman, {\em A few unsolved problems (the Ninth Whitney Problems Workshop)}, 2016. https://cms-math.net.technion.ac.il/files/2016/06/Charles-Fefferman-A-few-unsolved-problems-June-30-2016.pdf 
\bibitem[FI]{FI} C.~Fefferman, A.~Israel, {\em Fitting smooth functions to data}, CBMS Regional Conference Series in Mathematics, American Mathematical Society, 2020.

\bibitem[FL1]{FL1} C.~Fefferman, G.~K.~Luli, {\em The Brenner-Hochster-Koll\'ar and Whitney problems for vector-valued functions and jets}, Rev. Mat. Iberoam. (2013) 30 (3) pp. 875--872, DOI:10.4171/rmi/801.

\bibitem[FL2]{FL2} C.~Fefferman, G.~K.~Luli, {\em Solutions to a system of equations for $C^m$ functions}, Rev. Mat. Iberoam.
(2021) 37 (3) pp. 911--963, DOI:10.4171/rmi/1217.

\bibitem[FL3]{FL3} C.~Fefferman, G.~K.~Luli, {\em Generators for the $C^m$-closures of ideals}, Rev. Mat. Iberoam.
(2021) 37 (3) pp. 965--1006, DOI:10.4171/rmi/1218.

\bibitem[FS]{FS2} C.~Fefferman, A.~Shaviv, {\em Classification of implication closed ideals in certain
rings of jets}, 2022, preprint.

%\bibitem[P]{P} R.~S.~Palais, {\em The Morse lemma for Banach spaces}, Bull. %Amer. Math. Soc.
%75 (1969) pp. 968--971, DOI:10.1090/S0002-9904-1969-12318-9.

\bibitem[W]{W} H.~Whitney, {\em Analytic extensions of differentiable functions defined in closed sets}, Trans. Amer. Math. Soc. 36 (1934) pp. 63--89, DOI:10.1090/S0002-9947-1934-1501735-3.



\end{thebibliography}
\end{document}